\documentclass[a4,amstex,11pt]{article}
\usepackage{amsmath,wrapfig}
\usepackage{amssymb,bm,epic}
\usepackage[mathscr]{eucal}
\usepackage{amsthm, amsfonts, latexsym,enumerate}
\usepackage{graphicx}
\usepackage{setspace}
\setcounter{MaxMatrixCols}{20}

\makeatletter
\def\fixedfigure{\def\@captype{figure}}
\def\fixedtable{\def\@captype{table}}
\numberwithin{equation}{section}
\theoremstyle{definition}

\newtheorem{definition}{Definition}[section]
\newtheorem{theorem}[definition]{Theorem}
\newtheorem{proposition}[definition]{Proposition}
\newtheorem{lemma}[definition]{Lemma}

\theoremstyle{definition}
\newtheorem{remark}[definition]{Remark}
\newtheorem{example}[definition]{Example}


\title{Independence and orthogonality of algebraic eigenvectors over the max-plus algebra
}

\author{
Yuki Nishida\footnote{Organization for Research Initiatives and Development, Doshisha University,
1-3 Tatara-Miyakodani, Kyotanabe, 610-0394 Japan.
}
\and 
Sennosuke Watanabe\footnote{Department of Informatics, The University of Fukuchiyama,
3370 Azahori, Fukuchiyama, 620-0886, Japan}
\and
Yoshihide watanabe\footnote{Department of Mathematical Sciences, Doshisha University,
1-3 Tatara-Miyakodani, Kyotanabe, 610-0394 Japan.
}}
%
\date{October 4, 2021}
\begin{document}
\maketitle


\begin{abstract}
The max-plus algebra $\mathbb{R}\cup \{-\infty \}$ is a semiring with the two operations: 
addition $a \oplus b := \max(a,b)$ and multiplication $a \otimes b := a + b$.
Roots of the characteristic polynomial of a max-plus matrix are called  algebraic eigenvalues.
Recently, algebraic eigenvectors with respect to algebraic eigenvalues were introduced as a generalized concept of eigenvectors.
In this paper, we present properties of algebraic eigenvectors analogous to those of eigenvectors in the conventional linear algebra.
First, we prove that for generic matrices algebraic eigenvectors with respect to distinct algebraic eigenvalues are linearly independent.
We further prove that for symmetric matrices algebraic eigenvectors with respect to 
distinct algebraic eigenvalues are orthogonal to each other.
\renewcommand{\thefootnote}{\fnsymbol{footnote}}
{}\footnotemark[0]
\footnotetext[0]{Email: ynishida.cyjc1901@gmail.com (Yuki NISHIDA)}
\end{abstract}


{\it Keywords}: 
max-plus algebra, tropical semiring, eigenvector, independence, orthogonality, symmetric matrices

{\it 2010MSC}: 15A16, 15A80

\section{Introduction}
\label{intro}
The max-plus algebra $\mathbb{R}\cup \{-\infty \}$ is a semiring with the two operations: 
addition $a \oplus b := \max(a,b)$ and multiplication $a \otimes b := a + b$.
The max-plus algebra has its origin in steelworks~\cite{Green1960, Green1962}.
It has a wide range of applications in various fields of science and engineering, such as control theory and
scheduling of railway systems~\cite{Baccelli1992, Heidergott2005}. 
\par
The max-plus eigenvalue problem is an active research subject in the max-plus linear algebra. 
For a matrix $A \in \mathbb{R}_{\max}^{n \times n}$, a scalar $\lambda$ is called an eigenvalue of $A$
if there exists a nontrivial vector $\bm{x}$, called an eigenvector, satisfying $A \otimes \bm{x} = \lambda \otimes \bm{x}$.
Eigenvalues and eigenvectors are characterized in terms of graph theory.
A max-plus square matrix $A$ is associated with the weighted digraph $G(A)$ whose weighted adjacent matrix is $A$.
Then, the maximum eigenvalue is the maximum value of the average weights of circuits in $G(A)$.
There is an algorithm to find all eigenvalues and eigenvectors in polynomial time~\cite{Butkovic2010b}.
It is notable that max-plus matrices have a few eigenvalues.
In particular, an irreducible max-plus matrix has exactly one eigenvalue.
By contrast, the characteristic polynomial of an $n \times n$ max-plus matrix admits exactly $n$ roots counting multiplicities.
These roots are called algebraic eigenvalues~\cite{Akian2006}.
The maximum algebraic eigenvalue is always the maximum eigenvalue of the matrix~\cite{Green1983}, 
but other algebraic eigenvalues are not eigenvalues in general, i.e., they do not have corresponding eigenvectors.
\par
Recently, the authors have clarified the role of the roots of the characteristic polynomial $\chi_{A}(t)$
of a max-plus matrix $A$~\cite{Nishida2020}.
Each coefficient of the characteristic polynomial of $A \in \mathbb{R}_{\max}^{n \times n}$ corresponds to 
the weight of a multi-circuit in the associated graph $G(A)$,
where a multi-circuit is the union of disjoint elementary circuits in the graph.
A multi-circuit is called $\lambda$-maximal if the corresponding term attains the maximum of $\chi_{A}(\lambda)$.
The paper~\cite{Nishida2020} was written under the assumption that for each $\lambda \in \mathbb{R}_{\max}$
 the lengths of any two distinct $\lambda$-maximal multi-circuits are different.
For an algebraic eigenvalue $\lambda$ and a $\lambda$-maximal multi-circuit $\mathcal{C}$,
a nontrivial vector $\bm{x}$ satisfying
\begin{equation*}
(A_{\setminus\mathcal{C}} \oplus \lambda \otimes E_\mathcal{C}) \otimes \bm{x} 
= (A_\mathcal{C} \oplus \lambda \otimes E_{\setminus\mathcal{C}}) \otimes \bm{x}
\end{equation*}
is called an algebraic eigenvector of $A$ with respect to $\lambda$,
where matrices $A_\mathcal{C}, A_{\setminus\mathcal{C}}, E_\mathcal{C}$, and $E_{\setminus\mathcal{C}}$ and defined in Section 2.4.
We have proved in~\cite{Nishida2020} that algebraic eigenvectors possess many good properties 
so that we regard them as the extension of eigenvectors:
(i) all eigenvectors are also algebraic eigenvectors
(ii) the set of all algebraic eigenvectors with respect to an algebraic eigenvalue is a max-plus subspace, called algebraic eigenspace,
and (iii) the dimension of the algebraic eigenspace does not exceed the multiplicity of the algebraic eigenvalue
as a root of the characteristic polynomial.
Compared to a similar approach in the supertropical algebra~\cite{Izhakian2010, Izhakian2011a, Izhakian2011b, Izhakian2011c},
the concept of algebraic eigenvectors has advantages in dealing with bases of algebraic eigenspaces.
\par
In the present paper, we investigate further properties of algebraic eigenvectors.
In the conventional linear algebra, eigenvectors with respect to distinct eigenvalues are linearly independent
and hence the sum of eigenspaces becomes the direct sum.
In the max-plus algebra, independence of vectors can be considered in several ways
because various kinds of ranks of matrices are not equivalent~\cite{Akian2009,Develin2005}.
For example, the tropical rank of a matrix is the maximum order of nonsingular square submatrix.
The column rank is the cardinality of the minimum spanning set of 
the max-plus subspace spanned by the columns of the matrix. 
The tropical rank is less than or equal to the column rank; sometimes the equality does not hold.  
In the sense of tropical ranks, algebraic eigenvectors with respect to distinct algebraic eigenvalues can be dependent.
More precisely, the $n \times r$ matrix consisting of $r$ algebraic eigenvectors with respect to 
$r$ distinct algebraic eigenvalues may have the rank less than $r$.
However, even in that case, it may happen that each of the $r$ algebraic eigenvectors can not be expressed
as a linear combination of other $(r-1)$ vectors.
This means that the column rank of the $n \times r$ matrix consisting of $r$ algebraic eigenvectors is $r$.
In Section 3, we discuss the independence of algebraic eigenvectors in the latter sense.
Note that the independence of supertropical eigenvectors is discussed in the former sense~\cite{Niv2017}.
We first prove that the algebraic eigenspaces with respect to two distinct algebraic eigenvalues
have no intersection except for the zero vector.
Next, we consider the sum space of the algebraic eigenspaces with respect to all algebraic eigenvalues.
The union of the bases of all algebraic eigenspaces of course spans the sum space.
We prove that, under some weak conditions, the union of the bases becomes a basis of the sum space.
\par
For symmetric matrices, an important result in the conventional linear algebra is 
that eigenvectors with respect to distinct eigenvalues are orthogonal to each other.
In the max-plus algebra, the orthogonality of vectors is defined in the sense of tropical geometry~\cite{Develin2004, Grigoriev2018}.
In Section 4, we prove that algebraic eigenvectors of a max-plus symmetric matrix
with respect to distinct algebraic eigenvalues are also orthogonal to each other. 
A max-plus symmetric matrix usually has two distinct $\lambda$-maximal multi-circuits with the same length,
not satisfying the assumption in~\cite{Nishida2020} noted above.
This is because each multi-circuit is accompanied by the reversed one.
Hence, we need to extend the definition of algebraic eigenvectors to apply it to all max-plus matrices.
For this purpose, we consider a perturbation to transform a given matrix to the one satisfying the assumption.
The propositions that guarantee the validity of the modified definition of algebraic eigenvectors 
are presented in Section 2.5 and are proved in Section 5.

\section{The eigenvalue problem over the max-plus algebra}
\label{eigproblem}
\subsection{Max-plus algebra}
Let $\mathbb{R}_{\max} = \mathbb{R}\cup \{\varepsilon \}$ be the set of real numbers
$\mathbb{R}$ with an extra element $\varepsilon := -\infty$.
 We define two operations, addition $\oplus$ and multiplication $\otimes$, on $\mathbb{R}_{\max}$ by   
\begin{align*}
	a \oplus b = \max\{a,b\}, \quad a \otimes b = a+b, \quad a,b \in \mathbb{R}_{\max}.
\end{align*}
Then, $(\mathbb{R}_{\max}, \oplus, \otimes)$ is a commutative semiring
called the max-plus algebra or the tropical semiring.
Here, $\varepsilon$ is the identity element for addition and $e := 0$ is the identity element for multiplication.
For details about the max-plus algebra,
refer to~\cite{Baccelli1992, Butkovic2010, Green1979, Gondran2010, Heidergott2005, Maclagan2015}.
\par
Let $\mathbb{R}_{\max}^n$ and $\mathbb{R}_{\max}^{m \times n }$ be  
the set of  $n$-dimensional max-plus column vectors and the set of $m \times n$ max-plus matrices, respectively.
The operations $\oplus$ and $\otimes$ are extended to max-plus vectors and matrices as in the conventional linear algebra.
Let $\mathcal{E}$ denote the max-plus zero vector or the zero matrix, whose all entries are $\varepsilon$,
and let $E_{n}$ denote the max-plus unit matrix of order $n$.
\par
For a matrix $A = (a_{ij}) \in\mathbb{R}_{\max}^{n \times n}$, we define the determinant of $A$ by 
\begin{align*}
	\det A = \bigoplus_{\pi \in S_n} \bigotimes_{i=1}^n a_{i \pi(i)}, 
\end{align*}
where $S_n$ denotes the symmetric group of order $n$.
A square matrix $A$ is called nonsingular
if the maximum in $\det A$ is attained with precisely one permutation; otherwise, it is called singular. 
The nonsingularity of max-plus matrices is equivalent to the triviality of their kernels in the sense of tropical geometry.
Generally, the tropical kernel of $A = (a_{ij}) \in\mathbb{R}_{\max}^{m \times n}$ is 
the set of vectors $\bm{x} = {}^t\!(x_{1},x_{2},\dots,x_{n})$ such that the maximum 
\begin{align*}
		a_{i1} \otimes x_1 \oplus a_{i2} \otimes x_2 \oplus \cdots \oplus a_{in} \otimes x_n
\end{align*}
is attained with at least two terms for each $i=1,2,\dots,m$.
\begin{proposition}[\cite{Akian2012}] \label{ker}
	A matrix $A = (a_{ij}) \in \mathbb{R}_{\max}^{n \times n}$ is nonsingular if and only if
	the tropical kernel of $A$ is the trivial set $\{ \mathcal{E} \}$.
\end{proposition}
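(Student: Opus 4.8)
The plan is to prove both implications by relating the tropical kernel of $A$ directly to the structure of the permutations achieving the maximum in $\det A$. First I would set $w := \det A$ and let $\mathcal{P} := \{\pi \in S_n : \bigotimes_{i} a_{i\pi(i)} = w\}$ be the set of maximizing permutations. For the forward direction, I argue by contraposition: suppose $A$ is singular, so $|\mathcal{P}| \geq 2$. Pick two distinct $\pi, \sigma \in \mathcal{P}$ and look at the permutation $\tau := \pi^{-1}\sigma \neq \mathrm{id}$. The idea is to build a nonzero vector $\bm{x}$ supported on a nontrivial cycle of $\tau$ that lies in the tropical kernel. Concretely, on a cycle $(j_1\, j_2\, \cdots\, j_k)$ of $\tau$ with $k \geq 2$, I would set $x_{j_\ell}$ so that the two terms $a_{i,\pi(i)}\otimes x_{\pi(i)}$ and $a_{i,\sigma(i)}\otimes x_{\sigma(i)}$ coincide for each relevant row $i$; this is a telescoping/compatibility condition along the cycle, and the fact that $\pi$ and $\sigma$ have the same total weight $w$ is exactly what makes the system consistent around the loop (the product of ratios returns to $1$, i.e., the sum of the relevant increments is $0$). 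Setting $x_j = \varepsilon$ off the cycle, one checks each row of $A\otimes\bm{x}$ attains its max twice: rows indexed inside the cycle get a tie between the $\pi$- and $\sigma$-terms, and one must also verify no other term strictly exceeds them — here I'd use that $\pi$ is a maximizer, so $a_{i,\pi(i)}\otimes x_{\pi(i)}$ is already as large as any competing entry in that row given the chosen values. Rows outside the cycle have all entries $\varepsilon$ in the relevant columns, hence are trivially $\varepsilon = \varepsilon$, so they do not obstruct membership in the kernel. This produces a nontrivial element of the tropical kernel, proving the contrapositive.

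For the reverse direction, again by contraposition: assume the tropical kernel contains some $\bm{x} \neq \mathcal{E}$; I must show $A$ is singular. For each row $i$, the maximum in $\bigoplus_j a_{ij}\otimes x_j$ is attained by at least two indices; choose a ``tie graph'' or a relation on indices recording, for each $i$, a pair $j \neq j'$ both achieving the row maximum. The goal is to extract from these ties two distinct permutations of equal weight. I would pass to the tropicalization-over-a-field picture or argue combinatorially: the nonsingularity of $A$ (uniqueness of the optimal permutation) is known to be equivalent to the linear-programming dual having a unique optimal assignment, and a vector in the tropical kernel furnishes a nonzero ``slack''-type certificate obstructing that uniqueness. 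Concretely, from $\bm{x}$ one constructs a bipartite subgraph on rows/columns containing, for each row, at least two tight edges; such a graph must contain a cycle (more tight edges than a spanning forest allows once every row has degree $\geq 2$ in the relevant support), and alternating along that cycle swaps between two permutations without changing the total weight, exhibiting $|\mathcal{P}| \geq 2$.

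The main obstacle I expect is the bookkeeping in the forward direction: verifying that the vector built along one cycle of $\pi^{-1}\sigma$ genuinely lands in the \emph{full} tropical kernel — that is, that setting the off-cycle coordinates to $\varepsilon$ does not accidentally make some row attain its maximum \emph{only once} (for instance if the two tied columns for some row both lie outside the chosen cycle, or if a large off-cycle entry $a_{ij}$ with $x_j = \varepsilon$ is irrelevant but a third column sneaks above the tie). Handling this cleanly likely requires choosing the cycle of $\pi^{-1}\sigma$ carefully and possibly adjusting the construction so that every row touched by the cycle sees exactly the intended tie dominate; an alternative that sidesteps some of this is to invoke a known correspondence (e.g.\ from~\cite{Akian2012} or~\cite{Develin2005}) between singularity, tropical singularity of the associated system, and nontrivial tropical kernels, reducing the statement to citing the equivalence of the determinant condition with the existence of a ``tropically dependent'' set of columns. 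Given that Proposition~\ref{ker} is attributed to~\cite{Akian2012}, the cleanest write-up probably assembles these standard equivalences rather than re-deriving the cycle construction from scratch.
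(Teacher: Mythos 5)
This proposition is quoted from~\cite{Akian2012}; the paper offers no proof of it, so your closing suggestion --- simply citing the known equivalence --- is in fact exactly what the authors do. Judged as a proof attempt, however, your forward direction has a genuine gap, and it is precisely the one you flagged as ``bookkeeping''. The step ``rows outside the cycle have all entries $\varepsilon$ in the relevant columns, hence are trivially $\varepsilon=\varepsilon$'' is false: a row not touched by the chosen cycle of $\pi^{-1}\sigma$ may well have finite entries in the cycle's columns, and then padding with $\varepsilon$ off the cycle produces a vector that is \emph{not} in the tropical kernel. Concretely, take
\begin{equation*}
A=\begin{pmatrix} 0 & 0 & \varepsilon\\ 0 & 0 & \varepsilon\\ M & \varepsilon & 0\end{pmatrix},\qquad M>0 .
\end{equation*}
Here $\det A=0$ is attained by $\mathrm{id}$ and the transposition $(1\,2)$, so $A$ is singular and your recipe (with $\pi=\mathrm{id}$, $\sigma=(1\,2)$, cycle $\{1,2\}$) yields $\bm{x}={}^t\!(0,0,\varepsilon)$; but the third row of $A\otimes\bm{x}$ equals $M$ and is attained only once, so $\bm{x}$ is not in the kernel. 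A genuine kernel vector is ${}^t\!(0,0,M)$, i.e.\ the first column of $\mathrm{adj}(A)$. This is not a repairable detail of your construction but a sign that the right construction is different: the standard argument produces the kernel vector via the tropical Cramer rule (a column of the adjugate whose row--column product realizes $\det A$ twice, in the spirit of Lemma~\ref{adjproduct}), which automatically propagates the correct finite values to the coordinates you set to $\varepsilon$.

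Your reverse direction is closer to the truth --- from a nontrivial kernel vector one rescales columns by $\mathrm{diag}(\bm{x})$, observes that every row maximum of the rescaled matrix is attained at least twice, and extracts an alternating cycle of tight entries --- but as written it still skips the key step of showing that the tight edges along that cycle can be combined with an \emph{optimal} permutation to produce a second optimal permutation (an optimal $\pi$ need not use row-maximal entries, so complementary slackness or a Hungarian normalization is needed before the swap argument closes). Since both implications require machinery beyond the sketch, the honest write-up is the one you already propose at the end: cite~\cite{Akian2012} (or the tropical Cramer/dependence results of Develin--Santos--Sturmfels and Izhakian--Rowen) rather than rederive the equivalence.
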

\subsection{Max-plus matrices and graphs}
For a matrix $A = (a_{ij}) \in \mathbb{R}_{\max}^{n \times n}$, 
we define a weighted digraph $G(A) = (V,E,w)$ as follows.
The vertex set and the edge set are $V=\{1,2,\dots,n\}$ and $E=\{ (i,j) \ |\ a_{ij} \neq \varepsilon \}$, respectively,
and the weight function $w: E \to \mathbb{R}$ is defined by $w((i,j)) = a_{ij}$ for $(i,j) \in E$.
For a circuit $C=(i_{0},i_{1},\dots,i_{\ell -1}, i_{\ell} = i_{0})$ in $G(A)$, 
the number $\ell(C) := \ell$ is called the length of $C$ and 
the sum $w(C) := \sum_{k=0}^{\ell-1} a_{i_{k}i_{k+1}}$ is called the weight of $C$.
We define the average weight of $C$ by $\text{ave}(C):=w(C)/\ell(C)$.
If $i_{k} \neq i_{k'}$ for $0 \leq k < k' \leq \ell-1$, the circuit $C$ is called elementary.
A union of disjoint elementary circuits is called a multi-circuit and
its length and weight are defined as the sum of the lengths and weights of the circuits in it, respectively.
\par
For $A \in \mathbb{R}_{\max}^{n \times n}$, we consider the formal matrix power series of the form
\begin{align*}
	A^* := E_{n} \oplus A \oplus A^{\otimes 2}\oplus \cdots.
\end{align*}
If there is no circuit with positive weight in $G(A)$, then it is computed as the finite sum 
\begin{align*}
	A^* = E_{n} \oplus A \oplus A^{\otimes 2}\oplus \cdots \oplus A^{\otimes n-1}.
\end{align*}
\par
\subsection{Eigenvalues and eigenvectors}
A subset $U \subset \mathbb{R}_{\max}^n$ is called a subspace if it is closed with respect to
addition $\oplus$ and scalar multiplication $\otimes$.
A minimal spanning set of a subspace $U$ is called a basis of $U$.
In the max-plus algebra, a basis of a subspace is uniquely determined up to scalar multiplication~\cite{Butkovic2007}.
The number of vectors in a basis is called the dimension of the subspace.
Let $U_{1}, U_{2}, \dots, U_{m}$ be subspaces of $\mathbb{R}_{\max}^{n}$.
We define the sum of these subspaces by
\begin{align*}
	\bigoplus_{k=1}^{m} U_{k} = \left\{ \bigoplus_{k=1}^{m} \bm{x}_{k} \biggm| \bm{x}_{k} \in U_{k} \right\}.
\end{align*}
Note that the symbol $\bigoplus$ signifies the max-plus sum, not the direct sum.
If $\mathcal{B}_{k}$ is a basis of $U_{k}$ for $k=1,2,\dots,m$, 
then $\bigcup_{k=1}^{m} \mathcal{B}_{k}$ obviously spans $\bigoplus_{k=1}^{m} U_{k}$.
\par
For a matrix $A \in \mathbb{R}_{\max}^{n \times n}$, a scalar $\lambda$ is called an eigenvalue of $A$
if there exists a vector $\bm{x} \neq \mathcal{E}$ satisfying
\begin{align*}
	A \otimes \bm{x} = \lambda \otimes \bm{x}.
\end{align*}
Such nontrivial vector $\bm{x}$ is called an eigenvector of $A$ with respect to $\lambda$.
The set of all eigenvectors of $A$ with respect to $\lambda$ together with the zero vector $\mathcal{E}$
is denoted by $U(A,\lambda)$ and called the eigenspace of $A$ with respect to $\lambda$.
It is easily verified that $U(A,\lambda)$ is a subspace of $\mathbb{R}_{\max}^{n}$.
Here, we summarize the results in the literature on the max-plus eigenvalue problem,
 e.g.,~\cite{Baccelli1992,Butkovic2010,Heidergott2005}.
\begin{proposition}\label{eigval1}
	For a matrix $A \in \mathbb{R}_{\max}^{n \times n}$, 
	the maximum value of the average weights of all elementary circuits in $G(A)$ is the maximum eigenvalue of $A$.
\end{proposition}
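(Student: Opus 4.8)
The plan is to work with $\lambda^{*} := \bigoplus_{C} \mathrm{ave}(C)$, the maximum average weight over all elementary circuits $C$ of $G(A)$ (with the convention $\lambda^{*} = \varepsilon$ when $G(A)$ has no circuit), and to prove the statement in two halves: first that every eigenvalue $\mu$ of $A$ satisfies $\mu \le \lambda^{*}$, and then that $\lambda^{*}$ is itself an eigenvalue of $A$.

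For the upper bound, let $A \otimes \bm{x} = \mu \otimes \bm{x}$ with $\bm{x} = {}^t\!(x_{1},\dots,x_{n}) \neq \mathcal{E}$; we may assume $\mu \neq \varepsilon$, since $\varepsilon \le \lambda^{*}$ always. Choose $i_{0}$ with $x_{i_{0}} \neq \varepsilon$. Reading off the $i_{0}$-th coordinate, the maximum in $\bigoplus_{j} a_{i_{0}j} \otimes x_{j} = \mu \otimes x_{i_{0}}$ is attained at some index $i_{1}$, and $a_{i_{0}i_{1}} \otimes x_{i_{1}} = \mu \otimes x_{i_{0}} \neq \varepsilon$ forces $x_{i_{1}} \neq \varepsilon$. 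Iterating gives an infinite walk $i_{0}, i_{1}, i_{2}, \dots$ in $G(A)$ with $a_{i_{k}i_{k+1}} \otimes x_{i_{k+1}} = \mu \otimes x_{i_{k}}$ for all $k$. As $V$ is finite, some vertex recurs, and the stretch of the walk between two of its occurrences is an elementary circuit $C = (i_{p},\dots,i_{q} = i_{p})$. Summing the relations along $C$, the (finite) quantities $x_{i_{p}},\dots,x_{i_{q-1}}$ cancel by telescoping and we are left with $w(C) = \ell(C)\mu$ in ordinary arithmetic, i.e. $\mathrm{ave}(C) = \mu$. Hence $\mu \le \lambda^{*}$. (If $G(A)$ is acyclic there is no finite eigenvalue, matching $\lambda^{*} = \varepsilon$.)

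For the attainment, assume $G(A)$ has a circuit, so $\lambda^{*}$ is finite and attained at an elementary circuit. Subtracting $\lambda^{*}$ from every finite entry of $A$ shifts each eigenvalue and each circuit average by $-\lambda^{*}$, so without loss of generality $\lambda^{*} = e$. Then every elementary circuit has weight $\le e$, and since any circuit is a disjoint union of elementary ones, every circuit of $G(A)$ has weight $\le e$; thus there is no positive-weight circuit and $A^{*} = E_{n} \oplus A \oplus \cdots \oplus A^{\otimes n-1}$ is well defined. Pick a vertex $i$ on a critical circuit (average $e$). The maximal weight of a nonempty closed walk at $i$ is then $e$, so the $i$-th diagonal entry of $A^{+} := A \oplus A^{\otimes 2} \oplus \cdots$ equals $e = (E_{n})_{ii}$, and therefore the $i$-th column of $A^{*} = E_{n} \oplus A^{+}$ equals the $i$-th column of $A^{+}$. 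Putting $\bm{g} := (A^{*})_{\cdot i}$, we get $A \otimes \bm{g} = (A \otimes A^{*})_{\cdot i} = (A^{+})_{\cdot i} = \bm{g}$, and $\bm{g} \neq \mathcal{E}$ because its $i$-th entry is $e$; so $e = \lambda^{*}$ is an eigenvalue, and undoing the shift proves the claim. The degenerate case in which $G(A)$ has no circuit ($A^{\otimes n} = \mathcal{E}$, so $\varepsilon$ is the unique eigenvalue) can be handled directly or excluded by hypothesis.

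I expect the main obstacle to be the passage from arbitrary circuits to elementary ones in the attainment step: it is simultaneously what makes $A^{*}$ a finite sum after normalization and what forces $(A^{+})_{ii} = e$ at a critical vertex, and hence what upgrades the column $(A^{*})_{\cdot i}$ from a mere sub-fixed vector to a genuine eigenvector. By comparison the telescoping computation in the upper-bound step and the identity $A \otimes A^{*} = A^{+}$ are routine, and the only remaining care needed is the bookkeeping around $\varepsilon$ and the acyclic edge case.
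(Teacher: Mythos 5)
Your proof is correct, and it is essentially the standard textbook argument; the paper itself does not prove this proposition but cites it from the literature (Baccelli et al., Butkovi\v{c}, Heidergott et al.), so there is no in-paper proof to compare against. Both halves are sound: the upper bound by extracting a circuit from the saturation graph of an eigenvector and telescoping the potentials $x_{i_k}$, and the attainment by normalizing to $\lambda^{*}=e$ and taking a column of $A^{*}$ indexed by a critical vertex --- the latter is exactly the content of Proposition \ref{eigvec1} in the paper, which your argument in effect re-derives. Two small points of care. First, the stretch of the walk between two arbitrary occurrences of a recurring vertex need not be elementary; either take the \emph{first} repetition (so that $i_{p},\dots,i_{q-1}$ are pairwise distinct and the extracted circuit is elementary), or accept a general closed walk $C$ with $w(C)=\ell(C)\mu$ and decompose it edge-wise into elementary circuits, concluding $\mu\le\max_{k}\mathrm{ave}(C_{k})\le\lambda^{*}$ by the mediant inequality. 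Second, the acyclic case should not be ``excluded by hypothesis'': one should note that a finite acyclic digraph has a vertex $j$ of in-degree zero, so the $j$th column of $A$ is $\mathcal{E}$ and the unit vector $\bm{e}_{j}$ witnesses that $\varepsilon$ is an eigenvalue, matching $\lambda^{*}=\varepsilon$. With these touches the argument is complete.
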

Let $\lambda(A)$ be the maximum eigenvalue of $A$.
A circuit in $G(A)$ with the average weight $\lambda(A)$ is called critical. 
The subgraph $G^c(A)$ of $G(A)$ consisting of all critical circuits is called the critical graph.
\begin{proposition}\label{eigvec1}
	Let $\bm{g}_k$ be the $k$th column of $((-\lambda(A)) \otimes A)^*$.
	Then, $\bm{g}_k$ is an eigenvector of $A$ with respect to $\lambda$ if and only if $k$ is a vertex in $G^c(A)$.
	Further, the set $\{ \bm{g}_k\ |\ k \in K\}$ is a basis of the eigenspace $U(A,\lambda)$,
	where  $K$ is a set of vertices with exactly one vertex from each connected component of $G^c(A)$.
\end{proposition}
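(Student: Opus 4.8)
The plan is to reduce to the case $\lambda(A)=e$ by the substitution $\tilde{A}:=(-\lambda(A))\otimes A$. Subtracting $\lambda(A)$ from every edge weight of $G(A)$ subtracts $\lambda(A)$ from every average circuit weight, so $\lambda(\tilde{A})=e$, the critical graph is unchanged ($G^c(\tilde A)=G^c(A)$), and $U(\tilde A,e)=U(A,\lambda(A))$ since $\tilde A\otimes\bm{x}=\bm{x}$ is equivalent to $A\otimes\bm{x}=\lambda(A)\otimes\bm{x}$. Moreover $G(\tilde A)$ has no circuit of positive weight, so $((-\lambda(A))\otimes A)^{*}=\tilde A^{*}=E_{n}\oplus\tilde A\oplus\cdots\oplus\tilde A^{\otimes n-1}$, and $(\tilde A^{*})_{ij}$ is the maximum weight of a walk from $i$ to $j$ in $G(\tilde A)$ (the empty walk of weight $e$ being allowed when $i=j$); in particular $(\tilde A^{*})_{jj}=e$ because every circuit has weight at most $e$. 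From now on I work with $\tilde A$, write $G^{c}:=G^{c}(\tilde A)$, and let $\bm{g}_{k}$ denote its $k$th column.

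For the first assertion I would compute $\tilde A\otimes\tilde A^{*}$ entrywise: $(\tilde A\otimes\tilde A^{*})_{ij}$ is the maximum weight of a walk from $i$ to $j$ of length at least $1$. For $i\neq j$ this equals $(\tilde A^{*})_{ij}$, while for $i=j$ it is the maximum weight of a circuit through $j$, which is $\le e=(\tilde A^{*})_{jj}$, with equality precisely when $j$ lies on a circuit of weight $e$, that is, on a critical circuit. Hence $\tilde A\otimes\bm{g}_{k}=\bm{g}_{k}$ if and only if $(\tilde A\otimes\tilde A^{*})_{kk}=e$, i.e.\ $k\in V(G^{c})$; and since $(\bm{g}_{k})_{k}=e\neq\varepsilon$ the vector $\bm{g}_{k}$ is never trivial, so it is an eigenvector of $A$ with respect to $\lambda(A)$ exactly in that case.

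For the second assertion, membership of $\bm{g}_{k}$ $(k\in K)$ in $U(A,\lambda(A))$ is the first assertion. For spanning, take $\bm{x}\in U(A,\lambda(A))$ with $\bm{x}\neq\mathcal{E}$; iterating $\tilde A\otimes\bm{x}=\bm{x}$ gives $\tilde A^{*}\otimes\bm{x}=\bm{x}$, which read coordinatewise is exactly $\bm{x}=\bigoplus_{j}x_{j}\otimes\bm{g}_{j}$. I would then trim this sum in two steps. First, by the standard description of the eigenspace, every coordinate of an eigenvector satisfies $x_{j}=\bigoplus_{k\in V(G^{c})}(\tilde A^{*})_{jk}\otimes x_{k}$ — the point being that a long optimal walk realizing $x_{j}$ must eventually run along the critical graph — so $\bm{x}=\bigoplus_{k\in V(G^{c})}x_{k}\otimes\bm{g}_{k}$. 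Second, if $k$ and $k'$ are critical vertices in the same connected component of $G^{c}$, a closed walk through both that stays in $G^{c}$ has weight $e$, which forces $(\tilde A^{*})_{kk'}\otimes(\tilde A^{*})_{k'k}=e$ (using the general inequality $(\tilde A^{*})_{kk'}\otimes(\tilde A^{*})_{k'k}\le e$) and hence $\bm{g}_{k'}=(\tilde A^{*})_{kk'}\otimes\bm{g}_{k}$; so each component of $G^{c}$ contributes, up to a scalar, a single column $\bm{g}_{k}$ with $k\in K$, and $\{\bm{g}_{k}\mid k\in K\}$ spans $U(A,\lambda(A))$.

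Finally I would check minimality. Suppose $\bm{g}_{k_{0}}=\bigoplus_{k\in K\setminus\{k_{0}\}}\mu_{k}\otimes\bm{g}_{k}$ for some $\mu_{k}\in\mathbb{R}_{\max}$. Comparing $k_{0}$th coordinates gives $e=(\bm{g}_{k_{0}})_{k_{0}}=\bigoplus_{k\neq k_{0}}\mu_{k}\otimes(\tilde A^{*})_{k_{0}k}$, so some $k_{1}\neq k_{0}$ satisfies $\mu_{k_{1}}\otimes(\tilde A^{*})_{k_{0}k_{1}}=e$; comparing $k_{1}$th coordinates gives $(\tilde A^{*})_{k_{1}k_{0}}=(\bm{g}_{k_{0}})_{k_{1}}\ge\mu_{k_{1}}\otimes(\tilde A^{*})_{k_{1}k_{1}}=\mu_{k_{1}}=-(\tilde A^{*})_{k_{0}k_{1}}$, hence $(\tilde A^{*})_{k_{0}k_{1}}\otimes(\tilde A^{*})_{k_{1}k_{0}}\ge e$. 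Together with the general inequality $\le e$ this is an equality, so there is a closed walk through $k_{0}$ and $k_{1}$ of weight $e$; such a walk decomposes into circuits each of weight $e$, hence lies entirely in $G^{c}$ and puts $k_{0}$ and $k_{1}$ in the same connected component of $G^{c}$ — contradicting that $K$ contains exactly one vertex per component. Therefore $\{\bm{g}_{k}\mid k\in K\}$ is a minimal spanning set, i.e.\ a basis of $U(A,\lambda(A))$. The one genuinely substantive step is the spectral‑projector identity $x_{j}=\bigoplus_{k\in V(G^{c})}(\tilde A^{*})_{jk}\otimes x_{k}$ used in the spanning argument — the place where one must show that optimal long walks necessarily visit the critical graph; everything else is bookkeeping with walk weights and connectivity in $G^{c}$.
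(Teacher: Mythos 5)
Your proof is correct. Note that the paper does not actually prove Proposition~2.3: it is stated as a summary of known results with a citation to the literature (Baccelli et al., Butkovi\v{c}, Heidergott et al.), so there is no in-paper argument to compare against; what you have written is essentially the standard textbook proof from those references. All the steps check out: the normalization $\tilde{A}=(-\lambda(A))\otimes A$, the computation of $(\tilde{A}\otimes\tilde{A}^{*})_{kk}$ via closed walks through $k$, the proportionality of columns indexed by the same component of $G^{c}$ (using $(\tilde{A}^{*})_{kk'}\otimes(\tilde{A}^{*})_{k'k}=e$ in both directions), and the minimality argument by comparing the $k_{0}$th and $k_{1}$th coordinates. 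The only step you leave as a sketch is the one you correctly flag as substantive, namely $x_{j}=\bigoplus_{k\in V(G^{c})}(\tilde{A}^{*})_{jk}\otimes x_{k}$ for $\bm{x}\in U(\tilde{A},e)$; the idea you indicate is the right one, and the standard way to complete it is: from $x_{j}=\bigoplus_{l}(\tilde{A}^{\otimes m})_{jl}\otimes x_{l}$ pick $l$ attaining the maximum for some $m>n$, take an optimal walk $W$ of length $m$ from $j$ to $l$, observe that deleting any elementary circuit $C\subset W$ cannot increase the weight defect, so $w(C)=e$ and $C$ is critical; splitting $W$ at a vertex $k\in V(C)$ then gives $x_{j}\le(\tilde{A}^{*})_{jk}\otimes x_{k}$, and the reverse inequality is immediate from $\tilde{A}^{*}\otimes\bm{x}=\bm{x}$. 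With that detail filled in, the argument is complete.
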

\subsection{Max-plus characteristic polynomials and algebraic eigenvectors}
A (univariate) polynomial in the max-plus algebra has the form
\begin{align*}
	f(t) = c_0 \oplus c_1 \otimes t \oplus c_2 \otimes t^{\otimes 2} \oplus \cdots \oplus c_n \otimes t^{\otimes n},
	\quad c_0,c_1,\dots,c_n \in \mathbb{R}_{\max}, c_{n} \neq \varepsilon.
\end{align*}
Max-plus univariate polynomials are piecewise linear functions on $\mathbb{R}_{\max}$.
Every polynomial can be factorized into a product of linear factors as
\begin{align*}
	f(t) = c_{n} \otimes (t \oplus r_1)^{\otimes p_1} \otimes (t \oplus r_2)^{\otimes p_2} \otimes \cdots 
		\otimes (t \oplus r_m)^{\otimes p_m}.
\end{align*}
Then, $r_i$ and $p_i$ are called a root of $f(t)$ and its multiplicity, respectively.
In the graph of the piecewise linear function $f(t)$, a root is a undifferentiable point of $f(t)$ and its multiplicity is
the difference in the slopes of the lines around the point.
\par
The characteristic polynomial of $A = (a_{ij}) \in \mathbb{R}_{\max}^{n \times n}$ is defined by
\begin{align*}
	\chi_A(t) := \det (A \oplus t \otimes E_{n}).
\end{align*}
As in the conventional algebra, the characteristic polynomial of a matrix is closely related to the eigenvalue problem.
\begin{theorem}[\cite{Green1983}]
For a matrix $A \in \mathbb{R}^{n \times n}_{\max}$, 
the maximum root of the characteristic polynomial is the maximum eigenvalue of $A$.
\end{theorem}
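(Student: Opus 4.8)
The plan is to expand the characteristic polynomial $\chi_A(t) = \det(A \oplus t \otimes E_n)$ into monomials, identify its coefficients with weights of multi-circuits in $G(A)$, and then read off the largest root from the piecewise-linear graph of $\chi_A$. Write $\lambda := \lambda(A)$ for the maximum of the average weights $\text{ave}(C)$ over all elementary circuits $C$ of $G(A)$; by Proposition~\ref{eigval1} this is the maximum eigenvalue of $A$, so it suffices to prove that the maximum root of $\chi_A(t)$ equals $\lambda$.

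First I would write $\chi_A(t) = \bigoplus_{\pi \in S_n} \bigotimes_{i=1}^n (A \oplus t \otimes E_n)_{i\pi(i)}$, where the factor $(A \oplus t \otimes E_n)_{i\pi(i)}$ equals $a_{i\pi(i)}$ when $i \neq \pi(i)$ and equals $a_{ii} \oplus t$ when $i = \pi(i)$. For each $\pi$, I decompose it into cycles and, at every fixed point $i$, split the factor $a_{ii} \oplus t$ into the two alternatives $a_{ii}$ and $t$. Every monomial thus produced has the form $w(\mathcal{C}) \otimes t^{\otimes(n - \ell(\mathcal{C}))}$, where $\mathcal{C}$ is the multi-circuit of $G(A)$ consisting of the cycles of $\pi$ of length $\ge 2$ together with the loops at those fixed points that were assigned $a_{ii}$; conversely every multi-circuit of $G(A)$ arises from some permutation and choice of diagonal loops. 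Collecting monomials by the length $\ell = \ell(\mathcal{C})$ gives
\begin{equation*}
\chi_A(t) = t^{\otimes n} \oplus \bigoplus_{\ell = 1}^{n} \delta_\ell \otimes t^{\otimes(n - \ell)},
\qquad \delta_\ell := \bigoplus_{\ell(\mathcal{C}) = \ell} w(\mathcal{C}),
\end{equation*}
the inner sum running over all multi-circuits of length $\ell$ (with $\delta_\ell = \varepsilon$ if none exists).

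Next I would exploit two elementary facts about the coefficients. Because a multi-circuit of length $\ell$ is a disjoint union of elementary circuits whose lengths sum to $\ell$, the ratio $w(\mathcal{C})/\ell(\mathcal{C})$ is a weighted average of the numbers $\text{ave}(C)$ of its constituent circuits, so $\delta_\ell \le \ell\lambda$ for every $\ell \ge 1$; and if $C^*$ is an elementary circuit with $\text{ave}(C^*) = \lambda$, say of length $\ell^*$, then $C^*$ is itself a multi-circuit, so $\delta_{\ell^*} \ge w(C^*) = \ell^*\lambda$ and hence $\delta_{\ell^*} = \ell^*\lambda$. The first fact gives, for every $\ell \ge 1$ and every $t \ge \lambda$, that $\delta_\ell \otimes t^{\otimes(n-\ell)} = \delta_\ell + (n-\ell)t \le \ell\lambda + (n-\ell)t \le nt = t^{\otimes n}$, with strict inequality when $t > \lambda$; thus $\chi_A(t) = t^{\otimes n}$ for all $t \ge \lambda$, so $\chi_A$ is differentiable on $(\lambda, \infty)$ with slope $n$ and has no root there. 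The second fact gives $\chi_A(t) \ge \ell^*\lambda + (n-\ell^*)t$ for all $t$; this affine function meets the graph of $\chi_A$ at $t = \lambda$, where both sides equal $n\lambda$, so the left-hand slope of $\chi_A$ at $\lambda$ is at most $n - \ell^* < n$. Hence $\chi_A$ is not differentiable at $\lambda$, i.e.\ $\lambda$ is a root of $\chi_A$, and it is the largest one, as required.

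The step demanding the most care is the monomial bookkeeping in the second paragraph: one must verify that the cycles of length $\ge 2$ of $\pi$ together with the chosen diagonal loops always constitute a genuine multi-circuit — a union of \emph{disjoint} elementary circuits — of length exactly $n$ minus the number of factors taken equal to $t$, and that every multi-circuit is realized in this way. The remaining point is the degenerate case $\lambda = \varepsilon$, in which $G(A)$ contains no circuit at all, so $\delta_\ell = \varepsilon$ for every $\ell \ge 1$, $\chi_A(t) = t^{\otimes n}$ identically, and its sole root $\varepsilon$ coincides with the maximum eigenvalue by the usual convention $\max \emptyset = -\infty$.
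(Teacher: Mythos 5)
Your proof is correct: the identification of the coefficient of $t^{\otimes(n-\ell)}$ with the maximal weight of length-$\ell$ multi-circuits is exactly what the paper itself asserts when expanding $\chi_A(t)$, and the two coefficient bounds $\delta_\ell \le \ell\lambda$ and $\delta_{\ell^*} = \ell^*\lambda$ correctly force the largest corner of the piecewise-linear graph to sit at the maximal circuit mean, which is the maximal eigenvalue by Proposition~\ref{eigval1}. The paper only cites this theorem from the literature without proof, and your argument is essentially the standard one given there, so there is nothing to compare beyond noting that it is complete, including the degenerate case $\lambda=\varepsilon$.
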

\begin{theorem}[\cite{Akian2006}]
All eigenvalues of a matrix $A \in \mathbb{R}^{n \times n}_{\max}$ are roots of its characteristic polynomial.
\end{theorem}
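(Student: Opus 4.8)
The plan is to convert the eigenvalue equation into combinatorial data about circuits of $G(A)$ and then exhibit a breakpoint of the convex piecewise-linear function $\chi_A(t) = \det(A \oplus t \otimes E_n)$. Recall that expanding the determinant over permutations (and then splitting each diagonal entry $\max(a_{ii},t)$ into $a_{ii}\oplus t$) gives $\chi_A(t) = \bigoplus_{k=0}^{n} \delta_k \otimes t^{\otimes(n-k)}$, where $\delta_k \in \mathbb{R}_{\max}$ is the maximal weight of a multi-circuit of length $k$ in $G(A)$ and $\delta_0 = e$. Since the term of index $k$ is the line of slope $n-k$, a scalar $\mu$ is a root of $\chi_A$ exactly when the maximum $\max_{0 \le k \le n}\bigl(\delta_k + (n-k)\mu\bigr)$ is attained by at least two distinct values of $k$: two such terms have distinct slopes, hence their common value at $\mu$ is a genuine corner of the graph.

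Now suppose $A \otimes \bm{x} = \lambda \otimes \bm{x}$ with $\bm{x} \neq \mathcal{E}$. If $\lambda = \varepsilon$, then $A \otimes \bm{x} = \mathcal{E}$ forces every column of $A$ indexed in the support of $\bm{x}$ to be $\mathcal{E}$, so $\det A = \varepsilon$ and $\varepsilon$ is a root of $\chi_A$; thus assume $\lambda \in \mathbb{R}$. Replacing $A$ by $(-\lambda) \otimes A$ shifts every root of $\chi_A$ by $-\lambda$ and turns the eigenvalue into $e$, so it suffices to treat $A \otimes \bm{x} = \bm{x}$ and show that $e$ is a root of $\chi_A$. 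Writing $J$ for the support of $\bm{x}$, the coordinates of $A \otimes \bm{x} = \bm{x}$ with index outside $J$ force the block of $A$ with row indices outside $J$ and column indices in $J$ to be $\mathcal{E}$; hence, after permuting indices, $A$ is block upper triangular with diagonal blocks $P$ (rows and columns in $J$) and $S$. Since a permutation contributing a non-$\varepsilon$ term to $\det(A \oplus t \otimes E_n)$ must respect this block structure, $\chi_A = \chi_P \otimes \chi_S$; as both factors are convex and piecewise linear, every root of $\chi_P$ is a root of $\chi_A$. So we are reduced to a matrix $P$ admitting a fixed vector $\bm{y}$ all of whose entries are finite, and it is enough that $e$ be a root of $\chi_P$.

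From $P \otimes \bm{y} = \bm{y}$ I would extract two opposite facts. For every elementary circuit $C = (i_0, i_1, \dots, i_{\ell-1}, i_\ell = i_0)$ in $G(P)$ the inequalities $p_{i_k i_{k+1}} + y_{i_{k+1}} \le y_{i_k}$ sum around $C$, and the finite telescoping $y$-terms cancel, so $w(C) \le e$; consequently every multi-circuit has weight $\le e$ and $\delta_k \le e$ for all $k$. Conversely, picking for each index $i$ some $\sigma(i)$ attaining the maximum in $\bigoplus_j p_{ij} \otimes y_j = y_i$ defines a self-map $\sigma$ of the (finite) index set; any cycle $C_0$ of $\sigma$ is an elementary circuit along which the same summation holds with equality, so $w(C_0) = e$ and $\delta_{\ell(C_0)} = e$ with $\ell(C_0) \ge 1$. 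Therefore $\max_k \delta_k = e$ is attained both at $k = 0$ and at $k = \ell(C_0) \neq 0$, so $\chi_P(e)$ is attained by two distinct terms; by the criterion above $e$ is a root of $\chi_P$, hence of $\chi_A$, and undoing the normalization shows $\lambda$ is a root of $\chi_A$.

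The step I expect to be the genuine obstacle is the passage to the support $J$, which is not a cosmetic simplification: the eigenvalue $\lambda$ need not be the maximal eigenvalue $\lambda(A)$, so after normalization $G(A)$ may still contain positive-weight circuits, and then $\delta_k \le e$ would fail on all of $\{1,\dots,n\}$. One must verify both that the eigenvector equation makes $A$ block upper triangular along its support and that a root of a diagonal block $\chi_P$ survives in the product $\chi_P \otimes \chi_S$; granted these bookkeeping points, the rest is a two-line cycle-summation argument.
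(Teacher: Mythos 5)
Your proof is correct and complete. Note that the paper does not prove this statement at all --- it is quoted from Akian--Bapat--Gaubert with a citation --- so there is no in-paper argument to compare against; what you give is essentially the standard self-contained proof. All the steps check out: the identification of the coefficient of $t^{\otimes(n-k)}$ with the maximal weight $\delta_k$ of length-$k$ multi-circuits (with $\delta_0=e$), the characterization of roots as points where two terms of distinct slopes tie, the separate treatment of $\lambda=\varepsilon$ via $\det A=\varepsilon$, and the normalization to eigenvalue $e$. The two points you flagged as potential obstacles are in fact already resolved in your own text: the coordinates of $A\otimes\bm{x}=\bm{x}$ outside the support $J$ do force $A_{J^cJ}=\mathcal{E}$, whence every permutation contributing a finite term to $\det(A\oplus t\otimes E_n)$ stabilizes $J$ and $\chi_A=\chi_P\otimes\chi_S$; and since $\chi_S$ is convex piecewise linear with non-decreasing slope, every slope jump of $\chi_P$ persists in the sum, so roots of $\chi_P$ are roots of $\chi_A$ (indeed the linear factorizations multiply). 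The final step --- the cycle-sum inequality giving $\delta_k\le e$ for all $k$, together with the argmax self-map $\sigma$ whose cycle $C_0$ has weight exactly $e$ and length $\ge 1$, so that $\chi_P(e)$ is attained at both $k=0$ and $k=\ell(C_0)$ --- is exactly right. No gaps.
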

Roots of $\chi_{A}(t)$ are also called algebraic eigenvalues of $A$~\cite{Akian2006}.
Note that there are $n$ algebraic eigenvalues counting multiplicities.
\par
When we expand the polynomial $\chi_{A}(t)$, the coefficient of $t^{\otimes k}$ is the maximum weight of 
multi-circuits in $G(A)$ with lengths $n-k$.
For $\lambda \in \mathbb{R}$, a multi-circuit $\mathcal{C}$ satisfying 
$\chi_{A}(\lambda) = w(\mathcal{C}) \otimes t^{\otimes (n-\ell(\mathcal{C}))}$ is called $\lambda$-maximal.
An $\varepsilon$-maximal multi-circuit is defined as the $\mu$-maximal multi-circuit for sufficiently small real number $\mu$.
If $\lambda$ is a finite algebraic eigenvalue, then there exist at least two $\lambda$-maximal multi-circuits
with the different lengths.
\par
In the recent paper~\cite{Nishida2020},
the authors have introduced algebraic eigenvectors with respect to algebraic eigenvalues 
under the following assumption for a matrix $A$:\\[10pt]
($\bigstar$)\qquad
	\begin{minipage}{0.8\textwidth}
	for each $\lambda \in \mathbb{R}_{\max}$,
	the lengths of any two distinct $\lambda$-maximal multi-circuits in $G(A)$ are different. 
\end{minipage}\\[5pt]
First, for $A \in \mathbb{R}^{n \times n}_{\max}$ and a multi-circuit $\mathcal{C}$,
we define four types of matrices 
$A_\mathcal{C}, A_{\setminus\mathcal{C}}, E_\mathcal{C}$ and $E_{\setminus\mathcal{C}}$ as follows:
\begin{align*}
	&[A_\mathcal{C}]_{ij} = 
	\begin{cases} a_{ij} & \text{if $(i,j) \in E(\mathcal{C})$,} \\ \varepsilon & \text{otherwise,} \end{cases}
	\quad [A_{\setminus\mathcal{C}}]_{ij} = 
	\begin{cases} \varepsilon & \text{if $(i,j) \in E(\mathcal{C})$,} \\ a_{ij} & \text{otherwise,} \end{cases}\\
	&[E_\mathcal{C}]_{ij} = 
	\begin{cases} e & \text{if $i =j$, $i \in V(\mathcal{C})$,} \\ \varepsilon & \text{otherwise,} \end{cases}
	\quad[E_{\setminus\mathcal{C}}]_{ij} = 
	\begin{cases} e & \text{if $i =j$, $i \not\in V(\mathcal{C})$,} \\ \varepsilon & \text{otherwise.} \end{cases}
\end{align*}
Here, $[*]_{ij}$ denotes the $(i,j)$ entry of the matrix and 
 $V(\mathcal{C})$ and $E(\mathcal{C})$ denote the vertex set and the edge set of $\mathcal{C}$, respectively.
\begin{proposition}[\cite{Nishida2020}]
	Suppose $A \in \mathbb{R}^{n \times n}_{\max}$ satisfies {\rm ($\bigstar$)}.
	Then, $\lambda \in \mathbb{R}_{\max}$ is an algebraic eigenvalue of $A$ if and only if
	there exist a $\lambda$-maximal multi-circuit $\mathcal{C}$ and a vector $\bm{x} \neq \mathcal{E}$ such that
	\begin{align}
		(A_{\setminus\mathcal{C}} \oplus \lambda \otimes E_\mathcal{C}) \otimes \bm{x} 
		= (A_\mathcal{C} \oplus \lambda \otimes E_{\setminus\mathcal{C}}) \otimes \bm{x}. \label{algeigveceq}
	\end{align}
	This nontrivial vector $\bm{x}$ is called an algebraic eigenvector of $A$ with respect to $\lambda$.
\end{proposition}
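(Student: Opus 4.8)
The plan is to turn equation~\eqref{algeigveceq} into an ordinary max-plus eigenvalue problem for a single matrix, and then invoke the spectral theory recalled in Propositions~\ref{eigval1} and~\ref{eigvec1}. The key structural remark is that $C:=A_{\mathcal{C}}\oplus\lambda\otimes E_{\setminus\mathcal{C}}$ is a \emph{monomial} matrix. Indeed, a multi-circuit $\mathcal{C}$ is a union of disjoint elementary circuits, so $E(\mathcal{C})$ contains exactly one out-edge and one in-edge at each vertex of $V(\mathcal{C})$; hence $E(\mathcal{C})$ is a bijection of $V(\mathcal{C})$ which, extended by the identity on $V\setminus V(\mathcal{C})$, is a permutation $\pi_{0}$ of $\{1,\dots,n\}$, and $C$ has exactly one entry different from $\varepsilon$ in each row and in each column, namely $[C]_{i\pi_{0}(i)}$. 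In particular $\det C=w(\mathcal{C})\otimes\lambda^{\otimes(n-\ell(\mathcal{C}))}=\chi_{A}(\lambda)$, and a monomial matrix is invertible over $\mathbb{R}_{\max}$, the inverse having entries $[C^{-1}]_{\pi_{0}(i),i}=-[C]_{i\pi_{0}(i)}$. Writing $B:=A_{\setminus\mathcal{C}}\oplus\lambda\otimes E_{\mathcal{C}}$ and $M:=C^{-1}\otimes B$, equation~\eqref{algeigveceq} reads $B\otimes\bm{x}=C\otimes\bm{x}$, and multiplying on the left by $C^{-1}$ makes it equivalent to $M\otimes\bm{x}=\bm{x}=e\otimes\bm{x}$. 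Thus, for a fixed $\lambda$-maximal $\mathcal{C}$, a nontrivial solution of~\eqref{algeigveceq} exists if and only if $e$ is an eigenvalue of $M$.

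Next I would show $\lambda(M)\le e$, which by Proposition~\ref{eigval1} means every elementary circuit of $G(M)$ has average weight at most $e$; combined with Proposition~\ref{eigvec1} this yields that $e$ is an eigenvalue of $M$ exactly when $\lambda(M)=e$, i.e.\ exactly when $G(M)$ contains a circuit of average weight $e$. For the bound, note $A\oplus\lambda\otimes E_{n}=B\oplus C$ and $\det(A\oplus\lambda\otimes E_{n})=\chi_{A}(\lambda)=\det C$, so the permutation $\pi_{0}$ also realizes the maximum in $\det(A\oplus\lambda\otimes E_{n})$ and $[B]_{i\pi_{0}(i)}\le[C]_{i\pi_{0}(i)}$ for all $i$. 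Unwinding the definition of $M$, a circuit of $G(M)$ corresponds to an alternating closed walk using edges of $B$ and, traversed backwards, the edges of $\pi_{0}$; composing $\pi_{0}$ with the cyclic permutation underlying such a circuit produces a permutation $\pi$ with $\bigotimes_{i}[A\oplus\lambda\otimes E_{n}]_{i\pi(i)}$ at least $\chi_{A}(\lambda)$ plus the total weight of the circuit in $G(M)$. Since the left-hand product cannot exceed $\det(A\oplus\lambda\otimes E_{n})=\chi_{A}(\lambda)$, that total weight is $\le e$, so $\lambda(M)\le e$, and the circuit has total weight exactly $e$ (equivalently average weight $e$) iff the associated $\pi$ also realizes the maximum in $\det(A\oplus\lambda\otimes E_{n})$.

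It then remains to translate ``$G(M)$ has a circuit of average weight $e$'' into ``there is a second $\lambda$-maximal multi-circuit of $G(A)$ whose length differs from $\ell(\mathcal{C})$''; since $\lambda$ is an algebraic eigenvalue precisely when such a pair of $\lambda$-maximal multi-circuits exists, this establishes the equivalence, the ``only if'' direction choosing $\mathcal{C}$ to be one member of such a pair and building the weight-$e$ circuit of $G(M)$ from the other, and the ``if'' direction reading the second multi-circuit off from a weight-$e$ circuit of $G(M)$ and the permutation $\pi$ of the previous paragraph. The hard part is exactly this dictionary, and it is where assumption~($\bigstar$) enters: if $\pi$ is genuinely different from $\pi_{0}$, the two $\lambda$-maximal multi-circuits it and $\pi_{0}$ determine are distinct, so ($\bigstar$) forces their lengths to differ; if instead $\pi$ coincides with $\pi_{0}$, then the circuit of $G(M)$ must be a loop at a vertex $i$ with $\pi_{0}(i)=i$ and $[A]_{ii}=\lambda$, and then $\mathcal{C}$ together with the multi-circuit obtained by inserting or deleting the loop $(i,i)$ is the required pair, of consecutive and hence different lengths. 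Conversely, from two $\lambda$-maximal multi-circuits of different lengths one forms their alternating closed walk (or, in the degenerate containment case, a loop of weight $\lambda$) and checks that—for a suitable choice of $\mathcal{C}$ among the two—this yields an honest circuit of $G(M)$ with finite weights and total weight $e$. Handling the loops of weight exactly $\lambda$, and verifying in both directions that the walks used really are circuits of the respective graphs, is the most delicate bookkeeping; the rest reduces to routine manipulations with permutations and with the monomial matrix $C^{-1}$.
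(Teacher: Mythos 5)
Your reduction is exactly the route this paper (and the cited source) takes for finite $\lambda$: the matrix you call $M=C^{-1}\otimes B$ is precisely $B_{A,\lambda,\mathcal{C}}$ of Proposition~\ref{balc}, your bound $\lambda(M)\le e$ is the observation used in the proof of Proposition~\ref{balc-adj} that $G(B_{A,\lambda,\mathcal{C}})$ has no positive circuit, and the ``dictionary'' you defer is exactly the content of the maps $\varphi_{\mathcal{C}}$ and $\psi_{\mathcal{C}}$ of Definitions~\ref{mapvarphi} and~\ref{mappsi}. Two small points in that part deserve a sentence each: (i) to pass from ``$e$ is an eigenvalue of $M$'' to ``$\lambda(M)=e$'' you need that \emph{every} eigenvalue of a max-plus matrix is the average weight of some circuit of its graph (follow the argmax edges of $M\otimes\bm{x}=\bm{x}$ on the support of $\bm{x}$ until they close up), since $M$ need not be irreducible; and (ii) distinct determinant-attaining permutations $\pi\neq\pi_{0}$ do give distinct multi-circuits, but only after you fix the convention for diagonal positions where $a_{ii}=\lambda$ --- the loop bookkeeping you flag is genuinely where the care is needed, and your sketch points at the right resolution.

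The one genuine gap is the case $\lambda=\varepsilon$, which the proposition as stated ($\lambda\in\mathbb{R}_{\max}$) includes. There $C=A_{\mathcal{C}}\oplus\varepsilon\otimes E_{\setminus\mathcal{C}}=A_{\mathcal{C}}$, and if $\varepsilon$ is an algebraic eigenvalue then $\det A=\varepsilon$, so $\ell(\mathcal{C})<n$ and $C$ has entire rows equal to $\varepsilon$: it is not a monomial matrix and cannot be inverted, so the reduction to $M\otimes\bm{x}=\bm{x}$ breaks down at the first step. Equation~\eqref{algeigveceq} then splits into the pair of conditions
\begin{align*}
[A_{\setminus\mathcal{C}}]_{KK}\otimes[\bm{x}]_{K}\oplus[A]_{KL}\otimes[\bm{x}]_{L}=[A_{\mathcal{C}}]_{KK}\otimes[\bm{x}]_{K},
\qquad
[A]_{LK}\otimes[\bm{x}]_{K}\oplus[A]_{LL}\otimes[\bm{x}]_{L}=\mathcal{E}
\end{align*}
with $K=V(\mathcal{C})$ and $L$ its complement (this is how Section~5 of the paper treats $\varepsilon$): only the $K\times K$ block is invertible, the second condition forces certain coordinates to vanish, and a nontrivial solution exists precisely when $L\neq\emptyset$, i.e.\ when $\det A=\varepsilon$, i.e.\ when $\varepsilon$ is a root of $\chi_{A}(t)$. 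You should either add this separate argument or restrict your proof to $\lambda\in\mathbb{R}$ and handle $\varepsilon$ by the block decomposition.
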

\begin{proposition}[\cite{Nishida2020}] \label{balc}
	Suppose $A \in \mathbb{R}^{n \times n}_{\max}$ satisfies {\rm ($\bigstar$)}.
	A vector $\bm{x} \neq \mathcal{E}$ is an algebraic eigenvector of $A$ with respect to $\lambda \in \mathbb{R}$ if and only if
	it is an eigenvector of 
	\begin{align*}
		B_{A,\lambda,\mathcal{C}} := (A_\mathcal{C} \oplus \lambda \otimes E_{\setminus\mathcal{C}})^{-1}
			\otimes (A_{\setminus\mathcal{C}} \oplus \lambda \otimes E_\mathcal{C})
	\end{align*}
	with respect to $0$.
\end{proposition}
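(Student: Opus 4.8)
Write $P := A_\mathcal{C} \oplus \lambda \otimes E_{\setminus\mathcal{C}}$ and $Q := A_{\setminus\mathcal{C}} \oplus \lambda \otimes E_\mathcal{C}$, so that the defining equation~\eqref{algeigveceq} reads $Q \otimes \bm{x} = P \otimes \bm{x}$ and $B_{A,\lambda,\mathcal{C}} = P^{-1} \otimes Q$. The plan is to show that $P$ is invertible over $\mathbb{R}_{\max}$ — which is precisely what is needed for $B_{A,\lambda,\mathcal{C}}$ to make sense — and then to obtain the asserted equivalence simply by left-multiplying $Q \otimes \bm{x} = P \otimes \bm{x}$ by $P^{-1}$.

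First I would check that $P$ has exactly one entry different from $\varepsilon$ in each row and in each column. The multi-circuit $\mathcal{C}$ is a union of vertex-disjoint elementary circuits, so each vertex $i \in V(\mathcal{C})$ is the tail of exactly one edge of $\mathcal{C}$ and the head of exactly one edge of $\mathcal{C}$; hence the $i$th row and the $i$th column of $A_\mathcal{C}$ contain exactly one finite entry, whereas for $i \notin V(\mathcal{C})$ the $i$th row and column of $A_\mathcal{C}$ are identically $\varepsilon$ (every edge of $\mathcal{C}$ has both endpoints in $V(\mathcal{C})$). Since $\lambda \in \mathbb{R}$, the term $\lambda \otimes E_{\setminus\mathcal{C}}$ adds a single finite entry $\lambda$ in position $(i,i)$ for each $i \notin V(\mathcal{C})$ and nothing else. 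Adding the two contributions yields a matrix $P$ with a unique finite entry in every row and every column, say the one in row $i$ being $[P]_{i,\sigma(i)} \in \mathbb{R}$; uniqueness in the columns forces $\sigma$ to be a permutation of $\{1,\dots,n\}$. Defining $P^{-1}$ to be the matrix with $[P^{-1}]_{\sigma(i),i} := -[P]_{i,\sigma(i)}$ and all other entries $\varepsilon$, a direct computation with these monomial matrices gives $P \otimes P^{-1} = P^{-1} \otimes P = E_n$, so $P$ is invertible.

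With invertibility in hand the equivalence is purely formal. If $\bm{x} \neq \mathcal{E}$ satisfies~\eqref{algeigveceq}, i.e. $Q \otimes \bm{x} = P \otimes \bm{x}$, then left-multiplying by $P^{-1}$ and using associativity together with $P^{-1} \otimes P = E_n$ gives $B_{A,\lambda,\mathcal{C}} \otimes \bm{x} = P^{-1}\otimes Q\otimes\bm{x} = E_n \otimes \bm{x} = \bm{x} = 0 \otimes \bm{x}$, so $\bm{x}$ is an eigenvector of $B_{A,\lambda,\mathcal{C}}$ with respect to $0$. Conversely, if $\bm{x} \neq \mathcal{E}$ and $B_{A,\lambda,\mathcal{C}} \otimes \bm{x} = \bm{x}$, i.e. $P^{-1}\otimes Q\otimes\bm{x} = \bm{x}$, then left-multiplying by $P$ and using $P \otimes P^{-1} = E_n$ gives $Q \otimes \bm{x} = P \otimes \bm{x}$, which is~\eqref{algeigveceq}; hence $\bm{x}$ is an algebraic eigenvector of $A$ with respect to $\lambda$.

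The only step that is not formal is the second one, and within it the single combinatorial observation that a multi-circuit restricts to a permutation on its vertex set; combined with $\lambda \neq \varepsilon$, this is exactly what makes $P$ a max-plus monomial matrix and therefore invertible, so I expect this to be the crux while the remaining manipulations are routine. I would also remark that assumption~($\bigstar$) does not enter this argument except through the preceding proposition, which is what guarantees that algebraic eigenvectors are defined at all.
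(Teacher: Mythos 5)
Your proof is correct. The paper imports Proposition \ref{balc} from \cite{Nishida2020} without reproducing a proof, and your argument --- that $A_\mathcal{C} \oplus \lambda \otimes E_{\setminus\mathcal{C}}$ is a max-plus generalized permutation matrix (a multi-circuit induces a permutation on its vertex set, and $\lambda \neq \varepsilon$ supplies the remaining diagonal entries), hence invertible, after which the equivalence is formal cancellation --- is the standard one and is consistent with the machinery the paper itself uses elsewhere (cf.\ Lemma \ref{adj1} on generalized permutation matrices and the proof of Proposition \ref{balc-adj}).
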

Algebraic eigenvectors are also characterized in terms of adjugate matrices.
The adjugate matrix $\mathrm{adj}(A) \in \mathbb{R}_{\max}^{n \times n}$ is defined by $[\mathrm{adj}(A)]_{ij} = \det A^{(j,i)}$,
where $A^{(j,i)} \in \mathbb{R}_{\max}^{(n-1) \times (n-1)}$ is obtained from $A$ by deleting the $j$th row and the $i$th column.
Let $\Gamma(A,\lambda) := \mathrm{adj}(A \oplus \lambda \otimes E_{n})$.
\begin{proposition} \label{balc-adj}
	Let $\lambda \in \mathbb{R}$ be an algebraic eigenvalue of $A \in \mathbb{R}_{\max}^{n \times n}$
	and $\mathcal{C}$ be a $\lambda$-maximal multi-circuit.
	Then, $(B_{A,\lambda,\mathcal{C}})^{*}$ is equivalent to $\Gamma(A,\lambda)$
	up to permutation and scaling of columns.
	In particular, the $i$th column of $(B_{A,\lambda,\mathcal{C}})^{*}$
	is a scalar multiple of the $\sigma(i)$th column of $\Gamma(A,\lambda)$, 
	where $\sigma(i)$ is the succeeding vertex of $i$ in $\mathcal{C}$ if $i \in V(\mathcal{C})$
	and otherwise $\sigma(i) = i$.
\end{proposition}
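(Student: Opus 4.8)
The plan is to reduce the proposition to the single matrix identity
\[
\Gamma(A,\lambda)\;=\;\det(P)\otimes\bigl(B_{A,\lambda,\mathcal{C}}\bigr)^{*}\otimes P^{-1},
\qquad P:=A_{\mathcal{C}}\oplus\lambda\otimes E_{\setminus\mathcal{C}} .
\]
Granting this, the statement follows at once: $P^{-1}$ is a generalized permutation matrix supported exactly on the positions $(\sigma(i),i)$ with finite entries, so right multiplication by $\det(P)\otimes P^{-1}$ only permutes the columns according to $\sigma$ and rescales each of them by a finite scalar; reading the identity column by column yields the stated correspondence, as well as the claimed equivalence up to permutation and scaling of columns.

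First I would record the structure of $P$ and $Q:=A_{\setminus\mathcal{C}}\oplus\lambda\otimes E_{\mathcal{C}}$. Directly from the definitions one has $A\oplus\lambda\otimes E_n=P\oplus Q$, and $P$ is a generalized permutation matrix whose permutation is $\sigma$, the unique non-$\varepsilon$ entry of row $i$ being $a_{i\sigma(i)}$ when $i\in V(\mathcal{C})$ and $\lambda$ when $i\notin V(\mathcal{C})$; call it $p_i$. Hence $P$ is invertible, and since $|V(\mathcal{C})|=\ell(\mathcal{C})$ we get $\det P=w(\mathcal{C})\otimes\lambda^{\otimes(n-\ell(\mathcal{C}))}$, which equals $\chi_A(\lambda)=\det(A\oplus\lambda\otimes E_n)$ precisely because $\mathcal{C}$ is $\lambda$-maximal. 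Next, $P^{-1}\otimes(A\oplus\lambda\otimes E_n)=E_n\oplus B_{A,\lambda,\mathcal{C}}$, and multiplicativity of the max-plus determinant along the invertible factor $P^{-1}$ gives $\det\bigl(E_n\oplus B_{A,\lambda,\mathcal{C}}\bigr)=\det(P^{-1})\otimes\chi_A(\lambda)=e$. A positive-weight circuit of $G(B_{A,\lambda,\mathcal{C}})$, or a positive self-loop, would produce a permutation-term of weight $>e$ in the expansion of this determinant; so $G(B_{A,\lambda,\mathcal{C}})$ has no positive-weight circuit, $B_{A,\lambda,\mathcal{C}}^{*}$ is finite, and $\bigl(E_n\oplus B_{A,\lambda,\mathcal{C}}\bigr)^{*}=B_{A,\lambda,\mathcal{C}}^{*}$.

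The technical heart is the following lemma, which I would prove combinatorially: \emph{if $G(R)$ has no positive-weight circuit, then $\mathrm{adj}(E_n\oplus R)=R^{*}$.} Here $[\mathrm{adj}(E_n\oplus R)]_{ij}=\det\bigl((E_n\oplus R)^{(j,i)}\bigr)$ is a maximum over bijections $V\setminus\{j\}\to V\setminus\{i\}$, whose functional digraph is a disjoint union of circuits together with, when $i\neq j$, exactly one simple path from $i$ to $j$. Every circuit of $G(E_n\oplus R)$ has weight $\le e$ and every self-loop has weight $e$, so collapsing all circuit-vertices to self-loops does not decrease the weight; and a maximum-weight simple $i$-$j$ path has the same weight as a maximum-weight walk, since non-positive circuits may be excised from a walk. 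Hence the maximum equals $(R^{*})_{ij}$, the cases $i=j$ and ``no $i$-$j$ path'' (both sides $\varepsilon$) being immediate. Applying the lemma with $R=B_{A,\lambda,\mathcal{C}}$ gives $\mathrm{adj}\bigl(E_n\oplus B_{A,\lambda,\mathcal{C}}\bigr)=B_{A,\lambda,\mathcal{C}}^{*}$. I would then combine it with the elementary identity, valid for any matrix $Z$ and any generalized permutation matrix $P$ with permutation $\sigma$ and row entries $p_i$,
\[
\mathrm{adj}(P\otimes Z)=\det(P)\otimes\mathrm{adj}(Z)\otimes P^{-1},
\]
which holds because row $k$ of $P\otimes Z$ equals $p_k\otimes(\text{row }\sigma(k)\text{ of }Z)$, so every minor of $P\otimes Z$ differs from the corresponding minor of $Z$ only by a row permutation and a rescaling of rows. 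Taking $Z=P^{-1}\otimes(A\oplus\lambda\otimes E_n)=E_n\oplus B_{A,\lambda,\mathcal{C}}$, so that $P\otimes Z=A\oplus\lambda\otimes E_n$, we obtain
\[
\Gamma(A,\lambda)=\mathrm{adj}(A\oplus\lambda\otimes E_n)=\det(P)\otimes\mathrm{adj}(Z)\otimes P^{-1}=\det(P)\otimes B_{A,\lambda,\mathcal{C}}^{*}\otimes P^{-1},
\]
which is the identity announced at the start.

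I expect the main obstacle to be the lemma $\mathrm{adj}(E_n\oplus R)=R^{*}$: turning ``an optimal bijection has trivial circuit part and a maximum-weight simple $i$-$j$ path'' into a rigorous argument requires the excision estimate (removing a non-positive circuit from a walk does not decrease its weight) together with careful treatment of the degenerate cases $i=j$ and ``$G(R)$ has no $i$-$j$ path''. Secondary points needing care are the identity $\det P=\chi_A(\lambda)$, which rests on $|V(\mathcal{C})|=\ell(\mathcal{C})$ and on the precise combinatorial meaning of $\lambda$-maximality, and the remark that, since $\lambda\in\mathbb{R}$, all the scalars $\det P$ and $p_i$ are finite — so that the columns of $B_{A,\lambda,\mathcal{C}}^{*}$ and of $\Gamma(A,\lambda)$ are genuine (finite) scalar multiples of one another.
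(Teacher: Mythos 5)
Your proposal is correct and follows essentially the same route as the paper: factor $A\oplus\lambda\otimes E_{n}$ as $P\otimes(E_{n}\oplus B_{A,\lambda,\mathcal{C}})$ with $P=A_{\mathcal{C}}\oplus\lambda\otimes E_{\setminus\mathcal{C}}$ a generalized permutation matrix, apply the identity $\mathrm{adj}(E_{n}\oplus R)=R^{*}$ (the paper cites this as Yoeli's lemma, which you instead prove), and transfer the adjugate across the generalized permutation factor. The only substantive additions are your determinant argument that $G(B_{A,\lambda,\mathcal{C}})$ has no positive-weight circuit (which the paper asserts without proof) and the explicit form $\mathrm{adj}(P\otimes Z)=\det(P)\otimes\mathrm{adj}(Z)\otimes P^{-1}$ of the paper's Lemma on column equivalence; both are sound.
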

To prove this proposition, we use the following result in~\cite{Yoeli1961}.
\begin{lemma}[\cite{Yoeli1961}] \label{yoeli}
	 If all diagonal entries of a square matrix $P$ are $0$ and $G(P)$ has no circuit with the positive weight,
	then $\mathrm{adj}(P) = P^{*}$.
\end{lemma}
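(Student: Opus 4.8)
The plan is to prove the identity entrywise, by giving graph-theoretic interpretations of both $[\mathrm{adj}(P)]_{ij}$ and $[P^{*}]_{ij}$ and showing they coincide. First I would record the interpretation of $P^{*}$. Since $G(P)$ has no circuit of positive weight, the series terminates as in Section~2.2, and $[P^{*}]_{ij}$ equals the maximum weight of a walk from $i$ to $j$ in $G(P)$, where the empty walk (of weight $e$) is allowed when $i=j$. Because every circuit has nonpositive weight, any walk can be shortened to a simple path without decreasing its weight, so $[P^{*}]_{ij}$ equals the maximum weight of a simple path from $i$ to $j$; in particular $[P^{*}]_{ii}=e$.

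Next I would unfold $[\mathrm{adj}(P)]_{ij} = \det P^{(j,i)}$. A bijection from $\{1,\dots,n\}\setminus\{j\}$ to $\{1,\dots,n\}\setminus\{i\}$ extends uniquely to a permutation $\hat\sigma \in S_n$ with $\hat\sigma(j)=i$, and conversely every such permutation restricts to such a bijection. Hence
\[
	\det P^{(j,i)} = \bigoplus_{\substack{\hat\sigma \in S_n \\ \hat\sigma(j)=i}} \bigotimes_{k \neq j} p_{k\,\hat\sigma(k)},
\]
where no signs appear because the max-plus determinant carries none. Decomposing $\hat\sigma$ into cycles, the cycle containing $j$ reaches $i$ through $\hat\sigma(j)=i$; deleting this one edge $j \to i$ (whose weight is exactly the omitted factor, since $k=j$ is excluded) leaves a simple path from $i$ to $j$ in $G(P)$, while the remaining cycles are vertex-disjoint circuits covering the other vertices. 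Thus $\bigotimes_{k\neq j}p_{k\,\hat\sigma(k)}$ factors as the weight of a simple path from $i$ to $j$ multiplied by the total weight of the covering circuits.

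Then I would invoke the hypotheses. Each covering circuit has nonpositive weight, whereas the loops available at every vertex (since all diagonal entries equal $e$) contribute weight $e$; hence the covering part is maximized, with value $e$, by taking all remaining vertices to be fixed points of $\hat\sigma$, a choice compatible with any prescribed simple path from $i$ to $j$. Consequently $\det P^{(j,i)}$ equals the maximum weight of a simple path from $i$ to $j$, matching $[P^{*}]_{ij}$. In the diagonal case $i=j$ the same computation degenerates to the trivial path at $j$ together with all fixed points elsewhere, giving $\det P^{(j,j)} = e = [P^{*}]_{jj}$. Therefore $\mathrm{adj}(P) = P^{*}$.

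The main obstacle will be the bookkeeping in the second step: justifying the bijection-to-permutation correspondence, verifying that the cycle through $j$ really yields a genuine simple path once the uncounted edge $j\to i$ is removed, and handling the degenerate case $i=j$ uniformly. Once the weight of each admissible permutation is correctly split into a path part and a circuit-cover part, the two hypotheses make the optimization transparent, since nonpositive circuits can never improve on loops of weight $e$.
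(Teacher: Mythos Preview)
The paper does not supply its own proof of this lemma; it is simply quoted from \cite{Yoeli1961}. Your argument is a correct and standard graph-theoretic proof: the interpretation of $[P^{*}]_{ij}$ as the maximum weight of a simple $i\to j$ path, the identification of $\det P^{(j,i)}$ with a maximum over permutations $\hat\sigma$ satisfying $\hat\sigma(j)=i$, the decomposition of each such permutation into the $i\to j$ path coming from the cycle of $j$ plus a vertex-disjoint circuit cover of the remaining vertices, and the observation that the optimal circuit cover consists of loops (available everywhere since $p_{kk}=e$) are all sound, including the degenerate case $i=j$.
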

\begin{lemma} \label{adj1}
	Let $P = (p_{ij}) \in \mathbb{R}_{\max}^{n \times n}$ be a generalized permutation matrix, i.e.,
	there is a permutation $\sigma \in S_{n}$ such that $p_{ij} \neq \varepsilon$ if and only if $j = \sigma(i)$.
	Then, for any $Q \in \mathbb{R}_{\max}^{n \times n}$, $\mathrm{adj}(Q)$ is equivalent to $\mathrm{adj}(P \otimes Q)$
	up to permutation and scaling of columns.
\end{lemma}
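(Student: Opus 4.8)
The plan is to compute the adjugate of $P \otimes Q$ entrywise and compare each entry with the corresponding entry of $\mathrm{adj}(Q)$, tracking how the permutation $\sigma$ and the scalars $p_{i\sigma(i)}$ propagate through the determinant. Write $P = D \otimes P_{\sigma}$, where $P_{\sigma}$ is the $0$-$\varepsilon$ permutation matrix of $\sigma$ (entry $e$ in position $(i,\sigma(i))$, $\varepsilon$ elsewhere) and $D = \mathrm{diag}(d_{1},\dots,d_{n})$ with $d_{i} = p_{i\sigma(i)} \in \mathbb{R}$. Then $P \otimes Q = D \otimes (P_{\sigma} \otimes Q)$, and $P_{\sigma} \otimes Q$ is just the matrix $Q$ with its rows permuted by $\sigma$ (the $i$th row of $P_{\sigma}\otimes Q$ equals the $\sigma(i)$th row of $Q$).

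First I would handle the two factors separately. For the diagonal matrix $D$: deleting row $j$ and column $i$ from $D \otimes R$ (for any $R$) and taking the determinant, one sees that every permutation in the expansion of $\det (D\otimes R)^{(j,i)}$ picks up exactly the factor $\bigotimes_{k \neq j} d_{k}$ times a contribution $d_{\ell}$ coming from the column index matched to a row — more precisely $[\mathrm{adj}(D\otimes R)]_{ij} = \det (D\otimes R)^{(j,i)} = \left(\bigotimes_{k\neq i} d_{k}\right) \otimes \det R^{(j,i)}$, so $\mathrm{adj}(D \otimes R)$ is obtained from $\mathrm{adj}(R)$ by scaling the $i$th column by $\bigotimes_{k \neq i} d_{k}$; this is exactly "scaling of columns." For the permutation factor $P_{\sigma}$: since $P_{\sigma}\otimes Q$ is $Q$ with rows permuted by $\sigma$, deleting the $j$th row of $P_{\sigma}\otimes Q$ is the same as deleting the $\sigma^{-1}(j)$th row of $Q$ (then permuting the remaining rows by the induced bijection), and the determinant is invariant under row permutations in the max-plus setting because $\det$ sums over all permutations symmetrically and there is no sign. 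Hence $[\mathrm{adj}(P_{\sigma}\otimes Q)]_{ij} = \det (P_{\sigma}\otimes Q)^{(j,i)} = \det Q^{(\sigma^{-1}(j),i)} = [\mathrm{adj}(Q)]_{i,\sigma^{-1}(j)}$, i.e.\ $\mathrm{adj}(P_{\sigma}\otimes Q)$ is $\mathrm{adj}(Q)$ with its columns permuted. Composing the two steps via $P\otimes Q = D \otimes(P_{\sigma}\otimes Q)$ gives the claim.

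The main obstacle — really the only subtle point — is the bookkeeping in the permutation step: one must verify carefully that deleting a row and a column from a row-permuted matrix and then expanding the determinant genuinely reduces to a single entry of $\mathrm{adj}(Q)$ (with a well-defined column permutation depending only on $\sigma$), rather than to some scrambled combination, and that no stray scalar is lost or duplicated when $D$ and $P_{\sigma}$ are recombined. Because the max-plus determinant has no signs, the permutation-invariance of $\det$ under simultaneous relabeling of the surviving rows is automatic, so once the index arithmetic $j \mapsto \sigma^{-1}(j)$ is pinned down the argument is routine. I would state the two intermediate facts (effect of a diagonal left factor on $\mathrm{adj}$; effect of a permutation left factor on $\mathrm{adj}$) as small displayed identities and then simply chain them.
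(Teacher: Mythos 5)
Your proof is correct and follows essentially the same route as the paper's: the paper computes $\det(P\otimes Q)^{(i,j)} = \det Q^{(\sigma(i),j)}\otimes\bigotimes_{k\neq i}p_{k\sigma(k)}$ in a single step, whereas you factor $P = D\otimes P_{\sigma}$ and treat the diagonal and permutation parts separately, which changes nothing essential since both rest on the sign-free invariance of the max-plus determinant under row permutation and on row scalars factoring out of minors. Do note two index slips that do not affect the lemma as stated (equivalence only up to permutation and scaling of columns) but would matter for the precise form used in Proposition \ref{balc-adj}: in $[\mathrm{adj}(D\otimes R)]_{ij}=\det(D\otimes R)^{(j,i)}$ the deleted row is $j$, so the scalar picked up is $\bigotimes_{k\neq j}d_{k}$ (constant along column $j$, hence a column scaling), and deleting row $j$ of $P_{\sigma}\otimes Q$ removes row $\sigma(j)$ of $Q$, not row $\sigma^{-1}(j)$.
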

\begin{proof}
Let $Q^{(i,j)}$ be the matrix obtained by deleting the $i$th row and the $j$th column of $Q$.
Since the $k$th row of $P \otimes Q$ is identical to the $\sigma(k)$th row of $Q$ multiplied by $p_{k\sigma(k)}$,
we have
\begin{align*}
	\det (P \otimes Q)^{(i,j)} = \det Q^{(\sigma(i),j)} \otimes \bigotimes_{k \neq i} p_{k\sigma(k)}.
\end{align*}
Hence, the $i$th column of $\mathrm{adj}(P \otimes Q)$ is identical to 
the $\sigma(i)$th column of $\mathrm{adj}(Q)$ multiplied by $\bigotimes_{k \neq i} p_{k\sigma(k)}$, proving the lemma.
\end{proof}
\begin{proof}[Proof of Proposition \ref{balc-adj}]
	If $\mathcal{C}$ is $\lambda$-maximal, then $G(B_{A,\lambda,\mathcal{C}})$ does not contain any circuits 
	with the positive weights.
	By Lemma \ref{yoeli}, we have
	\begin{align*}
		(B_{A,\lambda,\mathcal{C}})^{*} = (B_{A,\lambda,\mathcal{C}} \oplus E_{n})^{*}
		= \mathrm{adj}(B_{A,\lambda,\mathcal{C}} \oplus E_{n}).
	\end{align*}
	On the other hand, we have
	\begin{align*}
		B_{A,\lambda,\mathcal{C}} \oplus E_{n} 
		&= (A_\mathcal{C} \oplus \lambda \otimes E_{\setminus\mathcal{C}})^{-1} \otimes
			( (A_{\setminus\mathcal{C}} \oplus \lambda \otimes E_\mathcal{C}) \oplus
			(A_\mathcal{C} \oplus \lambda \otimes E_{\setminus\mathcal{C}}) ) \\
		&= (A_\mathcal{C} \oplus \lambda \otimes E_{\setminus\mathcal{C}})^{-1} \otimes
			(A \oplus \lambda \otimes E_{n}).
	\end{align*}
	By Lemma \ref{adj1}, we have the conclusion.
\end{proof}
For a (not necessarily $\lambda$-maximal) multi-circuit $\mathcal{C}$,
let $W(A,\lambda,\mathcal{C})$ be the set of vectors satisfying \eqref{algeigveceq}.
\begin{proposition}[\cite{Nishida2020}] 
	For $\lambda \in \mathbb{R}$, a $\lambda$-maximal multi-circuit $\mathcal{C}$ and any multi-circuit $\mathcal{C}'$,
	we have $W(A,\lambda,\mathcal{C}') \subset W(A,\lambda,\mathcal{C})$.
	In particular, if $\mathcal{C}'$ is also $\lambda$-maximal, 
	then $W(A,\lambda,\mathcal{C}) = W(A,\lambda,\mathcal{C}')$.
\end{proposition}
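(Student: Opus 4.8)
The plan is to translate the membership condition into a fixed‑point problem for the matrices $B_{A,\lambda,\cdot}$ and then to exploit the $\lambda$-maximality of $\mathcal{C}$ through a determinant comparison carried out on the support of the vector. Write $M := A\oplus\lambda\otimes E_{n}$ and, for a multi-circuit $\mathcal{D}$, put $N_{\mathcal{D}} := A_{\mathcal{D}}\oplus\lambda\otimes E_{\setminus\mathcal{D}}$ and $L_{\mathcal{D}} := A_{\setminus\mathcal{D}}\oplus\lambda\otimes E_{\mathcal{D}}$, so that $L_{\mathcal{D}}\oplus N_{\mathcal{D}} = M$ and $W(A,\lambda,\mathcal{D})=\{\bm{x}\mid L_{\mathcal{D}}\otimes\bm{x}=N_{\mathcal{D}}\otimes\bm{x}\}$.

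First I would note that $N_{\mathcal{D}}$ is a generalized permutation matrix for \emph{every} $\mathcal{D}$, with permutation $\sigma_{\mathcal{D}}$ sending each $i\in V(\mathcal{D})$ to its successor in $\mathcal{D}$ and fixing the other vertices; in particular $N_{\mathcal{D}}$ is invertible. Applying $N_{\mathcal{D}}^{-1}$ to \eqref{algeigveceq} shows, with no hypothesis on $\mathcal{D}$, that $W(A,\lambda,\mathcal{D})=\{\bm{x}\mid B_{A,\lambda,\mathcal{D}}\otimes\bm{x}=\bm{x}\}$ (this is Proposition~\ref{balc} with ($\bigstar$) dropped), and the computation in the proof of Proposition~\ref{balc-adj} gives $B_{A,\lambda,\mathcal{D}}\oplus E_{n}=N_{\mathcal{D}}^{-1}\otimes M$. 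Writing $P := N_{\mathcal{C}'}^{-1}\otimes N_{\mathcal{C}}$, a generalized permutation matrix with permutation $\pi=\sigma_{\mathcal{C}}\circ\sigma_{\mathcal{C}'}^{-1}$, this yields the identity
\[
 B_{A,\lambda,\mathcal{C}'}\oplus E_{n}\;=\;P\otimes\bigl(B_{A,\lambda,\mathcal{C}}\oplus E_{n}\bigr).
\]

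Now let $\bm{x}\in W(A,\lambda,\mathcal{C}')$ with $\bm{x}\neq\mathcal{E}$ and let $S$ be its support. From $(B_{A,\lambda,\mathcal{C}'}\oplus E_{n})\otimes\bm{x}=\bm{x}$ and the identity we get $(B_{A,\lambda,\mathcal{C}}\oplus E_{n})\otimes\bm{x}=P^{-1}\otimes\bm{x}$; since $B_{A,\lambda,\mathcal{C}}\oplus E_{n}\geq E_{n}$ the left side is $\geq\bm{x}$, so $P^{-1}\otimes\bm{x}\geq\bm{x}$. Comparing supports shows $S$ is $\pi$-invariant; and $M\otimes\bm{x}=N_{\mathcal{C}'}\otimes\bm{x}$ together with $[M]_{ii}\geq\lambda$ forces $S$ to be $\sigma_{\mathcal{C}'}$-invariant, hence also $\sigma_{\mathcal{C}}$-invariant because $\sigma_{\mathcal{C}}=\pi\circ\sigma_{\mathcal{C}'}$. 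Restricting everything to $S$ gives $P|_{S}=N_{\mathcal{C}'}|_{S}^{-1}\otimes N_{\mathcal{C}}|_{S}$, so $\det(N_{\mathcal{C}}|_{S})=\det(P|_{S})\otimes\det(N_{\mathcal{C}'}|_{S})$. Since $S$ is $\sigma_{\mathcal{C}}$-invariant, $S\cap V(\mathcal{C})$ is a union of complete circuits of $\mathcal{C}$; calling this sub-multi-circuit $\mathcal{C}\cap S$ of $G(A|_{S})$, one checks $\det(N_{\mathcal{C}}|_{S})$ is exactly its contribution to $\chi_{A|_{S}}(\lambda)$, and an exchange argument — gluing a strictly better multi-circuit of $G(A|_{S})$ to the circuits of $\mathcal{C}$ lying outside $S$ would beat $\chi_{A}(\lambda)$, contradicting that $\mathcal{C}$ is $\lambda$-maximal for $G(A)$ — shows $\mathcal{C}\cap S$ is $\lambda$-maximal for $A|_{S}$, so $\det(N_{\mathcal{C}}|_{S})=\det(M|_{S})$; trivially $\det(N_{\mathcal{C}'}|_{S})\leq\det(M|_{S})$. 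Hence $\det(P|_{S})\geq e$. On the other hand $P^{-1}\otimes\bm{x}\geq\bm{x}$ forces $x_{j}\geq p_{j}\otimes x_{\pi(j)}$ for $j\in S$ (where $p_{j}:=[P]_{j\pi(j)}$, all entries finite on $S$), and telescoping around each $\pi$-cycle in $S$ shows every such cycle has weight $\leq e$; as $\det(P|_{S})$ is the $\otimes$-product of these cycle weights, $\det(P|_{S})=e$ and every $\pi$-cycle in $S$ has weight exactly $e$. This turns the inequalities $P^{-1}\otimes\bm{x}\geq\bm{x}$ into equalities, so $P^{-1}\otimes\bm{x}=\bm{x}$, hence $(B_{A,\lambda,\mathcal{C}}\oplus E_{n})\otimes\bm{x}=\bm{x}$, i.e.\ $M\otimes\bm{x}=N_{\mathcal{C}}\otimes\bm{x}$ and in particular $B_{A,\lambda,\mathcal{C}}\otimes\bm{x}\leq\bm{x}$.

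It remains to upgrade this to $B_{A,\lambda,\mathcal{C}}\otimes\bm{x}=\bm{x}$, i.e.\ $L_{\mathcal{C}}\otimes\bm{x}=M\otimes\bm{x}$; this is automatic for rows $i\notin S$, so fix $i\in S$ and let $T_{i}$ be the set of indices attaining $[M\otimes\bm{x}]_{i}$. At most one index of row $i$ fails the relation $[L_{\mathcal{C}}]_{ij}=[M]_{ij}$ — the head of the single $\mathcal{C}$-edge out of $i$, or $i$ itself in a loop/diagonal situation — so it suffices that $T_{i}$ is not the singleton of that exceptional index. From $\bm{x}\in W(A,\lambda,\mathcal{C}')$ we obtain an index $j$ with $[L_{\mathcal{C}'}]_{ij}=[M]_{ij}$ in $T_{i}$ and the index $\sigma_{\mathcal{C}'}(i)$ in $T_{i}$, while $M\otimes\bm{x}=N_{\mathcal{C}}\otimes\bm{x}$ gives $\sigma_{\mathcal{C}}(i)\in T_{i}$; a short case analysis on the incidence of $i$ with $V(\mathcal{C})$, $V(\mathcal{C}')$ and loops — using that $N_{\mathcal{C}'}\otimes\bm{x}=M\otimes\bm{x}$ already forces $a_{ii}\leq\lambda$ when $i\notin V(\mathcal{C}')$ — shows these cannot all collapse onto the exceptional index. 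This gives $\bm{x}\in W(A,\lambda,\mathcal{C})$, hence $W(A,\lambda,\mathcal{C}')\subset W(A,\lambda,\mathcal{C})$; when $\mathcal{C}'$ is also $\lambda$-maximal, applying the inclusion with $\mathcal{C}$ and $\mathcal{C}'$ interchanged yields equality. I expect the restriction-to-support step to be the real obstacle — establishing that $\mathcal{C}\cap S$ is $\lambda$-maximal for $A|_{S}$ and keeping track of which permutations fix $S$ — together with the diagonal/loop bookkeeping in the last paragraph; the passage to the fixed-point description and the identity involving $P$ should be routine.
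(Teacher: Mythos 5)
Your argument is correct, and there is nothing in this paper to compare it against: the proposition is imported from~\cite{Nishida2020} and stated here without proof, so you have in effect supplied a self-contained proof. The route you take is a clean one. The reduction to $(B_{A,\lambda,\mathcal{D}}\oplus E_{n})\otimes\bm{x}=\bm{x}$ and the identity $B_{A,\lambda,\mathcal{C}'}\oplus E_{n}=P\otimes(B_{A,\lambda,\mathcal{C}}\oplus E_{n})$ are exactly the manipulations the paper itself uses in proving Proposition~\ref{balc-adj} (via Lemma~\ref{adj1}), and they are valid for an arbitrary multi-circuit $\mathcal{D}$ because $\lambda\in\mathbb{R}$ makes $N_{\mathcal{D}}=A_{\mathcal{D}}\oplus\lambda\otimes E_{\setminus\mathcal{D}}$ a genuine generalized permutation matrix; note this is the one place finiteness of $\lambda$ is essential. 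The genuinely new content is your determinant comparison on the support $S$: the exchange argument showing $\mathcal{C}\cap S$ is $\lambda$-maximal for $A|_{S}$ is exactly where the hypothesis on $\mathcal{C}$ enters, and it does work because $\sigma_{\mathcal{C}}$-invariance of $S$ forces $S\cap V(\mathcal{C})$ to be a union of whole circuits of $\mathcal{C}$, so the complementary circuits can be reused disjointly. The two steps you flag as delicate both check out. For rows $i\notin S$, the equality is automatic only after one observes that $\sigma_{\mathcal{C}}(i)\notin S$ (by the invariance of $S$), so that $[N_{\mathcal{C}}\otimes\bm{x}]_{i}=\varepsilon$ and hence the whole row of $M\otimes\bm{x}$ vanishes. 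For rows $i\in S$, the final case analysis closes in all three configurations: if $i\in V(\mathcal{C})$ with $\sigma_{\mathcal{C}}(i)\neq i$, the witness supplied by $L_{\mathcal{C}'}\otimes\bm{x}=M\otimes\bm{x}$ (or the diagonal term when $i\notin V(\mathcal{C}')$) avoids the excluded index $\sigma_{\mathcal{C}}(i)$; if $\mathcal{C}$ has a loop at $i$, then $N_{\mathcal{C}}\otimes\bm{x}=M\otimes\bm{x}$ forces $a_{ii}\geq\lambda$ while the $\mathcal{C}'$-relations force either a non-diagonal witness or $a_{ii}\leq\lambda$, so $[L_{\mathcal{C}}]_{ii}=\lambda=a_{ii}$ suffices; and if $i\notin V(\mathcal{C})$, the $\mathcal{C}'$-relations give $[A\otimes\bm{x}]_{i}=\lambda\otimes x_{i}$ directly. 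I would only ask you to write out this case analysis explicitly, since it is the step where the diagonal entries $a_{ii}$ versus $\lambda$ can actually differ and the claim ``at most one index of row $i$ fails $[L_{\mathcal{C}}]_{ij}=[M]_{ij}$'' needs the loop and non-loop cases separated as you indicate.
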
 
Hence, the subspace $W(A,\lambda,\mathcal{C})$ does not depend on the choice of a $\lambda$-maximal multi-circuits $\mathcal{C}$.
We call $W(A,\lambda,\mathcal{C})$ the algebraic eigenspace of $A$ with respect to $\lambda$.
\begin{proposition}[\cite{Nishida2020}] \label{dimsub}
	Suppose $A \in \mathbb{R}^{n \times n}_{\max}$ satisfies {\rm ($\bigstar$)}.
	The dimension of the algebraic eigenspace of $A$ with respect to $\lambda$ does not exceed
	the multiplicity of $\lambda$ in the characteristic polynomial of $A$.
\end{proposition}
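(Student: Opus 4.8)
The plan is to turn the statement into a counting problem about the critical graph of the matrix $B_{A,\lambda,\mathcal{C}}$ and then to transport that count back to $G(A)$ through the factorization already exploited in the proof of Proposition~\ref{balc-adj}.

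\textbf{Step 1: reduce to the critical graph.} By Proposition~\ref{balc} the algebraic eigenspace $W(A,\lambda,\mathcal{C})$ coincides with the eigenspace $U(B,0)$, where $B:=B_{A,\lambda,\mathcal{C}}$. Since $\mathcal{C}$ is $\lambda$-maximal, $G(B)$ contains no circuit of positive weight (exactly the observation used to prove Proposition~\ref{balc-adj}), so the maximum eigenvalue of $B$ is at most $0$; as $\lambda$ is an algebraic eigenvalue, $U(B,0)\neq\{\mathcal{E}\}$, whence $\lambda(B)=0$. Proposition~\ref{eigvec1} then identifies $\dim W(A,\lambda,\mathcal{C})=\dim U(B,0)$ with the number $d$ of connected components of the critical graph $G^{c}(B)$, so it suffices to show $d\le p$, where $p$ is the multiplicity of $\lambda$ as a root of $\chi_{A}(t)$.

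\textbf{Step 2: transfer to $G(A)$.} As in the proof of Proposition~\ref{balc-adj}, put $P:=A_{\mathcal{C}}\oplus\lambda\otimes E_{\setminus\mathcal{C}}$; this is a generalized permutation matrix, with underlying permutation $\sigma$, and $A\oplus\lambda\otimes E_{n}=P\otimes(E_{n}\oplus B)$ while $\det P=\chi_{A}(\lambda)$. Expanding $\det(A\oplus\lambda\otimes E_{n})=\det P\otimes\det(E_{n}\oplus B)$ term by term, each permutation $\tau$ with $\bigotimes_{i}(E_{n}\oplus B)_{i\tau(i)}=e$ — equivalently, each $\tau$ whose nontrivial cycles are zero-weight circuits of $G(B)$ — produces a permutation $\pi_{\tau}:=\tau\circ\sigma$ attaining the maximum $\chi_{A}(\lambda)$ in $\det(A\oplus\lambda\otimes E_{n})$; discarding the fixed points of $\pi_{\tau}$ (whose diagonal factor is forced to be $\lambda$), together with an appropriate choice of loops at vertices $v$ with $a_{vv}=\lambda$, yields a $\lambda$-maximal multi-circuit $\mathcal{C}_{\tau}$ of $G(A)$. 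The empty multi-circuit of $G(B)$ recovers $\mathcal{C}$ itself, while a nonempty zero-weight circuit $\gamma$ of $G(B)$ recovers a $\lambda$-maximal multi-circuit $\mathcal{C}_{\gamma}$ obtained by rerouting $\mathcal{C}$ along $\gamma$ — the edges of $\mathcal{C}$ met by $\gamma$ being exchanged for the corresponding off-$\mathcal{C}$ entries of $A$ or for diagonal $\lambda$'s; with the natural choice of loops one has $\mathcal{C}_{\gamma}\neq\mathcal{C}$, hence $\ell(\mathcal{C}_{\gamma})\neq\ell(\mathcal{C})$ by~{\rm ($\bigstar$)}.

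\textbf{Step 3: count lengths, and the main obstacle.} The components $\Gamma_{1},\dots,\Gamma_{d}$ of $G^{c}(B)$ are pairwise vertex-disjoint, so choosing a critical circuit $\gamma_{i}\subseteq\Gamma_{i}$ for each $i$ gives circuits that can be freely combined, producing $\lambda$-maximal multi-circuits attached to the unions $\bigcup_{i\in S}\gamma_{i}$. Writing $\chi_{A}(t)=\max_{0\le k\le n}(c_{k}+k t)$, the lengths of all $\lambda$-maximal multi-circuits of $G(A)$ are exactly the numbers $n-k$ with $c_{k}+k\lambda=\chi_{A}(\lambda)$, and these active indices fill a subinterval of $\{k^{-},k^{-}+1,\dots,k^{+}\}$, where $k^{-}$ and $k^{+}$ are the slopes of the piecewise-linear function $\chi_{A}$ immediately to the left and to the right of $\lambda$; thus there are at most $k^{+}-k^{-}+1=p+1$ possible lengths. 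The crux — and the part I expect to be genuinely laborious — is the combinatorial claim that, under~{\rm ($\bigstar$)}, passing from $G(B)$ to $G(A)$ each connected component of $G^{c}(B)$ consumes a \emph{distinct} one of these lengths, the length $\ell(\mathcal{C})$ being already consumed by $\mathcal{C}$ itself; granting this, $\mathcal{C}$ together with the $d$ component-multi-circuits realizes $d+1$ distinct lengths, so $d+1\le p+1$ and $\dim W(A,\lambda,\mathcal{C})=d\le p$. Proving the claim requires a careful analysis of how rerouting $\mathcal{C}$ along vertex-disjoint critical circuits of $G(B)$ rewrites the cycle structure of $\sigma$, and in particular a treatment of the degenerate cases — critical loops of $G(B)$ and loops of $\mathcal{C}$ — where $\mathcal{C}_{\gamma}$ differs from $\mathcal{C}$ only by loops at vertices $v$ with $a_{vv}=\lambda$; assumption~{\rm ($\bigstar$)} is used throughout, both to convert ``the multi-circuit changed'' into ``the length changed'' and to bound the number of available lengths.
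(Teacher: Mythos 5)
This proposition is quoted from \cite{Nishida2020} and the present paper contains no proof of it, so there is no in-paper argument to compare yours against; I can only assess your proposal on its own terms. Your overall strategy is reasonable and, I believe, the right one: Proposition~\ref{balc} identifies $W(A,\lambda,\mathcal{C})$ with $U(B,0)$ for $B=B_{A,\lambda,\mathcal{C}}$, Proposition~\ref{eigvec1} converts its dimension into the number $d$ of components of $G^{c}(B)$, and the slope description of multiplicity correctly bounds the number of admissible lengths of $\lambda$-maximal multi-circuits by $p+1$ (you say the active indices ``fill a subinterval'' of $\{k^{-},\dots,k^{+}\}$, which is not quite right --- they are merely a subset --- but only the cardinality bound is needed). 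Under {\rm ($\bigstar$)} distinct $\lambda$-maximal multi-circuits have distinct lengths, so the whole proof reduces to producing $d+1$ pairwise distinct $\lambda$-maximal multi-circuits.

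That reduction is exactly where your proof stops being a proof. The assertion that $\mathcal{C},\varphi_{\mathcal{C}}(\gamma_{1}),\dots,\varphi_{\mathcal{C}}(\gamma_{d})$ are pairwise distinct --- equivalently, that rerouting $\mathcal{C}$ along critical circuits chosen from distinct components of $G^{c}(B)$ never produces the same multi-circuit twice, nor reproduces $\mathcal{C}$ --- is stated with ``granting this'' and never established. It is not automatic: $\psi_{\mathcal{C}}$ is not a two-sided inverse of $\varphi_{\mathcal{C}}$ in the degenerate cases you mention (for instance, when $\gamma$ is the reversal of a circuit of $\mathcal{C}$ of average weight $\lambda$, $\varphi_{\mathcal{C}}(\gamma)$ is $\mathcal{C}$ with that circuit deleted and $\psi_{\mathcal{C}}$ applied to the result returns $\emptyset$), so injectivity has to be argued case by case for loops of $G(B)$, loops of $\mathcal{C}$, and genuine reroutings. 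Since this distinctness claim is the entire content of the proposition once the reductions are made, the proposal as written has a genuine gap rather than a laborious-but-routine omission. Two smaller points: Proposition~\ref{balc} is stated only for $\lambda\in\mathbb{R}$, so the case $\lambda=\varepsilon$ (where $A_{\mathcal{C}}\oplus\varepsilon\otimes E_{\setminus\mathcal{C}}$ need not be invertible) requires a separate argument that you do not supply; and you should justify, rather than assume, that distinct components of $G^{c}(B)$ give columns of $B^{*}$ that are not scalar multiples of one another --- though this is indeed part of the statement of Proposition~\ref{eigvec1}.
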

\begin{remark}
	Even when $A \in \mathbb{R}^{n \times n}_{\max}$ does not satisfy {\rm ($\bigstar$)},
	there is a vector $\bm{x} \neq \mathcal{E}$ satisfying \eqref{algeigveceq} 
	if $\lambda \in \mathbb{R}_{\max}$ is an algebraic eigenvalue of $A$.
	However, Proposition \ref{dimsub} may not hold for that case.
	In particular, there may be a nontrivial vector satisfying \eqref{algeigveceq} 
	if $\lambda$ is not an algebraic eigenvalue of $A$.
\end{remark}
\subsection{Extension of the definition of algebraic eigenvectors}
Although generic matrices satisfy the assumption ($\bigstar$),
max-plus symmetric matrices, which we focus on in Section 4, does not satisfy it 
because every circuit with length larger than two is accompanied by the reversed one.
Thus, we need to extend the definition of algebraic eigenvectors so that it can be applied to all square matrices.
For $A = (a_{ij}) \in \mathbb{R}^{n \times n}_{\max}$ and $\bm{\zeta} = (\zeta_{ij}) \in [0,1]^{n \times n}$,
we define the matrix $A(\bm{\zeta};\delta)$ with positive real parameter $\delta$ by
\begin{align*}
	[A(\bm{\zeta};\delta)]_{ij} = a_{ij} - \zeta_{ij} \delta
\end{align*}
for all $i,j$.
Let $Z$ be the set of matrices $\bm{\zeta} \in [0,1]^{n \times n}$ with the following property:
there exists $\delta' > 0$ such that $A(\bm{\zeta};\delta)$ satisfies the assumption ($\bigstar$) 
for all $\delta$ with $0 < \delta < \delta'$.
As $\mathbb{R}$ is an infinite field,  $Z \neq \emptyset$.
When $\delta > 0$ is sufficiently small, 
if $\lambda$ is a root of $\chi_{A}(t)$ with the multiplicity $m$ 
then the interval $\mathcal{B}(\lambda,n\delta) := [\lambda - n\delta, \lambda + n\delta]$ contains 
exactly $m$ roots of  $\chi_{A(\bm{\zeta};\delta)}(t)$ counting multiplicities.
We set $\mathcal{B}(\varepsilon,n\delta) := \{ \varepsilon \}$.
If $\chi_{A}(t)$ has the root $\lambda = \varepsilon$ with the multiplicity $m$, so does $\chi_{A(\bm{\zeta};\delta)}(t)$.
Now, we define the set
\begin{align}
	W(A,\lambda) = \bigcap_{\bm{\zeta} \in Z} \lim_{\delta \to +0} 
		\bigoplus_{\lambda' \in \mathcal{B}(\lambda,n\delta)} 
		W(A(\bm{\zeta};\delta), \lambda', \mathcal{C}_{\lambda'}(\bm{\zeta};\delta)), \label{modalgeigsp}
\end{align}
where $\mathcal{C}_{\lambda'}(\bm{\zeta};\delta)$ is a $\lambda'$-maximal multi-circuit in $G(A(\bm{\zeta};\delta))$.
The limit of parametrized subsets $\{S(\delta)\}_{\delta > 0}$ of $\mathbb{R}_{\max}^{n}$ is defined by
\begin{align*}
	\lim_{\delta \to +0} S(\delta) = \{ \bm{x} \in \mathbb{R}_{\max}^{n} \ |\ 
		{}^{\exists}\! \{\bm{x}(\delta)\}_{\delta > 0}, \bm{x}(\delta) \in S(\delta) 
		\text{ such that } \lim_{\delta \to +0} \bm{x}(\delta) = \bm{x} \}.
\end{align*}
For $\bm{\zeta} \in Z$ and sufficiently small $\delta > 0$,
it follows from the definition of $Z$ that
$W(A(\bm{\zeta};\delta), \lambda', \mathcal{C}_{\lambda'}(\bm{\zeta};\delta)) \neq \{\mathcal{E}\}$ 
if and only if $\lambda'$ is a root of $\chi_{A(\bm{\zeta};\delta)}(t)$.
Hence, the infinite sum of max-plus subspaces in~\eqref{modalgeigsp} is in fact finite and can be written as
\begin{align*}
	W(A,\lambda) = \bigcap_{\bm{\zeta} \in Z} \lim_{\delta \to +0} 
		\bigoplus_{p} W(A(\bm{\zeta};\delta), \lambda^{(p)}(\bm{\zeta};\delta), \mathcal{C}^{(p)}(\bm{\zeta};\delta)),
\end{align*}
where $\lambda^{(p)}(\bm{\zeta};\delta), p=1,2, \dots, $ are the (finite number of) roots of $\chi_{A(\bm{\zeta};\delta)}(t)$
in $\mathcal{B}(\varepsilon,n\delta)$ and 
$\mathcal{C}^{(p)}(\bm{\zeta};\delta)$ denotes $\mathcal{C}_{\lambda^{(p)}(\bm{\zeta};\delta)}(\bm{\zeta};\delta)$.
The following propositions show that $W(A,\lambda)$ deserves to be called the algebraic eigenspace of $A$ with respect to $\lambda$.
\begin{proposition} \label{subspace}
	$W(A,\lambda)$ is a max-plus subspace of $\mathbb{R}_{\max}^{n}$.
\end{proposition}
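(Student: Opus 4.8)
The plan is to show that $W(A,\lambda)$ is closed under both $\oplus$ and $\otimes$ by tracking these operations through the three constructions that build it: the subspaces $W(A(\bm\zeta;\delta),\lambda',\mathcal C_{\lambda'}(\bm\zeta;\delta))$, the finite sum $\bigoplus_p$ over roots in the perturbation band, the limit $\lim_{\delta\to+0}$, and finally the intersection $\bigcap_{\bm\zeta\in Z}$. Since being a subspace is preserved under all four of these operations, the result should follow by peeling them off one layer at a time. First I would recall that each $W(A(\bm\zeta;\delta),\lambda',\mathcal C_{\lambda'}(\bm\zeta;\delta))$ is already known to be a max-plus subspace: by Proposition \ref{balc} it is the eigenspace with respect to $0$ of the matrix $B_{A(\bm\zeta;\delta),\lambda',\mathcal C_{\lambda'}(\bm\zeta;\delta)}$ (note $A(\bm\zeta;\delta)$ satisfies ($\bigstar$) for $\bm\zeta\in Z$ and small $\delta$), and eigenspaces are subspaces by the remark following the definition of $U(A,\lambda)$. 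Then the finite sum $\bigoplus_p$ of these is a subspace by the general fact, stated in Section 2.3, that the max-plus sum of subspaces is again a subspace.

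The next step is to verify that the limit operation $\lim_{\delta\to+0}$ preserves the subspace property. Write $S(\delta) := \bigoplus_p W(A(\bm\zeta;\delta),\lambda^{(p)}(\bm\zeta;\delta),\mathcal C^{(p)}(\bm\zeta;\delta))$, a subspace of $\mathbb R_{\max}^n$ for each small $\delta>0$, and let $S := \lim_{\delta\to+0} S(\delta)$. For closure under $\otimes$: if $\bm x\in S$ is realized as $\lim_{\delta\to+0}\bm x(\delta)$ with $\bm x(\delta)\in S(\delta)$, then for any scalar $c\in\mathbb R_{\max}$ the vectors $c\otimes\bm x(\delta)$ lie in $S(\delta)$ (or are the zero vector, which also lies in $S(\delta)$) and converge to $c\otimes\bm x$, so $c\otimes\bm x\in S$; the case $c=\varepsilon$ is handled by noting $\mathcal E\in S$. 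For closure under $\oplus$: given $\bm x=\lim\bm x(\delta)$ and $\bm y=\lim\bm y(\delta)$ with $\bm x(\delta),\bm y(\delta)\in S(\delta)$, the sum $\bm x(\delta)\oplus\bm y(\delta)\in S(\delta)$ and, since $\oplus$ is continuous (it is the coordinatewise max), $\bm x(\delta)\oplus\bm y(\delta)\to\bm x\oplus\bm y$, hence $\bm x\oplus\bm y\in S$. Finally, an arbitrary intersection of subspaces is a subspace, so $W(A,\lambda)=\bigcap_{\bm\zeta\in Z} S_{\bm\zeta}$ is a subspace, where $S_{\bm\zeta}$ denotes the limit set for the particular $\bm\zeta$.

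The one place requiring genuine care — and the main obstacle — is the limit step, specifically the interaction between the limit and the extended-real topology on $\mathbb R_{\max}^n$: a sequence of vectors $\bm x(\delta)$ can have components drifting to $\varepsilon=-\infty$, and one must check that the selecting sequences witnessing $\bm x,\bm y\in S$ can be used simultaneously (they can: the definition of the limit lets us choose $\{\bm x(\delta)\}$ and $\{\bm y(\delta)\}$ independently, and then $\bm x(\delta)\oplus\bm y(\delta)$ is a valid witnessing sequence for $\bm x\oplus\bm y$ because $S(\delta)$ is closed under $\oplus$). One should also confirm that the empty-sum and zero-vector edge cases are consistent with the convention $\mathcal B(\varepsilon,n\delta)=\{\varepsilon\}$ and that $\mathcal E\in S(\delta)$ always, so that $\mathcal E\in W(A,\lambda)$ and the scalar multiplication by $\varepsilon$ poses no problem. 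Beyond these bookkeeping points the argument is routine, and I would present it compactly as the four-layer peeling described above.
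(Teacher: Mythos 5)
Your argument is correct, but it takes a genuinely different route from the paper. You prove the statement abstractly by ``peeling layers'': each $W(A(\bm{\zeta};\delta),\lambda',\mathcal{C}_{\lambda'}(\bm{\zeta};\delta))$ is a subspace, finite sums of subspaces are subspaces, the limit $\lim_{\delta\to+0}S(\delta)$ of a family of subspaces is a subspace because $\oplus$ (coordinatewise $\max$) and scalar $\otimes$ are continuous on $\mathbb{R}_{\max}^{n}$ and witnessing families can be combined pointwise in $\delta$, and arbitrary intersections of subspaces are subspaces. That continuity argument is sound, including at components tending to $\varepsilon$, and it has the advantage of handling $\lambda=\varepsilon$ and $\lambda\neq\varepsilon$ uniformly, whereas the paper treats $\lambda=\varepsilon$ separately via the maps $\xi_{A,\mathcal{C}}$. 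The paper instead proves more: for $\lambda\neq\varepsilon$ it identifies $\tilde{W}=\lim_{\delta\to+0}\bigoplus_{p}W(A(\bm{\zeta};\delta),\lambda^{(p)}(\bm{\zeta};\delta),\mathcal{C}^{(p)}(\bm{\zeta};\delta))$ explicitly as the subspace spanned by the columns $\bm{g}_{i}$, $i\in\bigcup_{p}I^{(p)}$, of $\Gamma(A,\lambda)$, using continuity of the adjugate entries in $\delta$ and a maximal-coefficient argument to show every limit vector is a combination of these columns. That explicit generating set is not decorative: it is reused in the proof of Proposition \ref{modifeigspace} and in showing $\tilde{W}$ is independent of $\bm{\zeta}$, so your shorter proof establishes the proposition as stated but would not substitute for the paper's proof without reproving the column description elsewhere. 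Two minor points: your appeal to Proposition \ref{balc} for the component subspaces only covers $\lambda'\in\mathbb{R}$; for $\lambda'=\varepsilon$ closure under $\oplus$ and $\otimes$ should be checked directly from the defining equation \eqref{algeigveceq}, which is immediate. Also, nonemptiness of each $S(\delta)$ (hence $\mathcal{E}\in S(\delta)$) should be noted once, as you do.
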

\begin{proposition} \label{ifonlyif}
	$\lambda \in \mathbb{R}_{\max}$ is an algebraic eigenvalue of $A$ if and only if $W(A,\lambda) \neq \{\mathcal{E} \}$.
\end{proposition}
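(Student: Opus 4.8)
The plan is to prove the two implications separately, the forward one (``$\lambda$ an algebraic eigenvalue $\Rightarrow W(A,\lambda)\neq\{\mathcal{E}\}$'') being the substantial one. For the converse I would argue by contraposition: suppose $\lambda$ is not a root of $\chi_A(t)$. Every root of $\chi_{A(\bm{\zeta};\delta)}(t)$ lies in $\mathcal{B}(\lambda',n\delta)$ for some root $\lambda'$ of $\chi_A(t)$ (the root-counting property quoted before the statement, together with the fact that both polynomials have $n$ roots counting multiplicities), and $\varepsilon$ is a root of $\chi_{A(\bm{\zeta};\delta)}(t)$ only if it is a root of $\chi_A(t)$. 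Hence for every $\bm{\zeta}\in Z$ there is $\delta'>0$ such that for $0<\delta<\delta'$ the interval $\mathcal{B}(\lambda,n\delta)$ contains no root of $\chi_{A(\bm{\zeta};\delta)}(t)$; since $A(\bm{\zeta};\delta)$ satisfies $(\bigstar)$, each summand $W(A(\bm{\zeta};\delta),\lambda',\mathcal{C}_{\lambda'}(\bm{\zeta};\delta))$ with $\lambda'\in\mathcal{B}(\lambda,n\delta)$ is then $\{\mathcal{E}\}$, so the (finite) sum and its limit as $\delta\to+0$ are $\{\mathcal{E}\}$, and therefore $W(A,\lambda)=\bigcap_{\bm{\zeta}\in Z}\{\mathcal{E}\}=\{\mathcal{E}\}$.

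For the forward implication, suppose $\lambda$ is a root of $\chi_A(t)$ and write $W_{\bm{\zeta}}:=\lim_{\delta\to+0}\bigoplus_{p}W(A(\bm{\zeta};\delta),\lambda^{(p)}(\bm{\zeta};\delta),\mathcal{C}^{(p)}(\bm{\zeta};\delta))$, so that $W(A,\lambda)=\bigcap_{\bm{\zeta}\in Z}W_{\bm{\zeta}}$. Fix $\bm{\zeta}\in Z$. For all small $\delta>0$ the interval $\mathcal{B}(\lambda,n\delta)$ contains a root $\lambda^{(p)}$ of $\chi_{A(\bm{\zeta};\delta)}(t)$, and by Propositions \ref{balc} and \ref{eigvec1} the nonzero subspace $W(A(\bm{\zeta};\delta),\lambda^{(p)},\mathcal{C}^{(p)})$ — the eigenspace of $B_{A(\bm{\zeta};\delta),\lambda^{(p)},\mathcal{C}^{(p)}}$ with respect to $0$ — is spanned by certain columns of $(B_{A(\bm{\zeta};\delta),\lambda^{(p)},\mathcal{C}^{(p)}})^{*}$; by Proposition \ref{balc-adj} each such column is a scalar multiple of a column of $\Gamma(A(\bm{\zeta};\delta),\lambda^{(p)})=\mathrm{adj}(A(\bm{\zeta};\delta)\oplus\lambda^{(p)}\otimes E_n)$. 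Picking one such column, normalizing its maximum entry to $0$, and letting $\delta\to+0$ along a subsequence on which the column index is constant, I obtain (using that $\mathrm{adj}$ is a continuous piecewise-linear map and $A(\bm{\zeta};\delta)\oplus\lambda^{(p)}\otimes E_n\to A\oplus\lambda\otimes E_n$) a nonzero vector $\bm{x}_{\bm{\zeta}}\in W_{\bm{\zeta}}$. When $\lambda$ is finite this $\bm{x}_{\bm{\zeta}}$ is a scalar multiple of a column of $\Gamma(A,\lambda)$, which is well defined and has finite diagonal entries $[\Gamma(A,\lambda)]_{jj}=\chi_{A^{(j,j)}}(\lambda)$, so no collapse occurs; when $\lambda=\varepsilon$, where $\Gamma(A,\varepsilon)$ may vanish, I would instead work with $\mathrm{adj}(A\oplus\mu\otimes E_n)$ and let $\mu\to-\infty$ after the same normalization.

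The hard part is upgrading ``each $W_{\bm{\zeta}}$ is nonzero'' to ``$\bigcap_{\bm{\zeta}\in Z}W_{\bm{\zeta}}$ is nonzero'', i.e.\ producing a single vector valid for every perturbation direction $\bm{\zeta}$. What makes this feasible is that, in the $\delta\to+0$ limit, the candidate vectors are forced to lie among the finitely many columns of the single $\bm{\zeta}$-independent matrix $\Gamma(A,\lambda)$ (resp., for $\lambda=\varepsilon$, the normalized limits of columns of $\mathrm{adj}(A\oplus\mu\otimes E_n)$), because $\mathrm{adj}$ is continuous and the limit of $A(\bm{\zeta};\delta)\oplus\lambda^{(p)}\otimes E_n$ is $A\oplus\lambda\otimes E_n$ irrespective of $\bm{\zeta}$ and of which root $\lambda^{(p)}$ is chosen. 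I would then characterize intrinsically — via the combinatorics of the $\lambda$-maximal multi-circuits of $A$ itself and the critical-graph description behind Proposition \ref{balc-adj}, which the perturbed matrices reproduce in the limit — the set of columns of $\Gamma(A,\lambda)$ that occur as such limits, show that this set is nonempty and the same for every $\bm{\zeta}\in Z$, and conclude that the corresponding columns lie in $\bigcap_{\bm{\zeta}\in Z}W_{\bm{\zeta}}=W(A,\lambda)$. Carrying out this intrinsic identification, verifying that the normalization does not let the extracted vectors degenerate, and treating $\lambda=\varepsilon$ uniformly with the finite case are the remaining technical obstacles.
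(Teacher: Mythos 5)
Your converse direction is essentially correct and self-contained: once one accepts the paper's observation that for $\bm{\zeta}\in Z$ and small $\delta$ the summand $W(A(\bm{\zeta};\delta),\lambda',\mathcal{C}_{\lambda'}(\bm{\zeta};\delta))$ is nontrivial only when $\lambda'$ is a root of $\chi_{A(\bm{\zeta};\delta)}(t)$, the root-counting property forces every summand over $\mathcal{B}(\lambda,n\delta)$ to be trivial when $\lambda$ is not a root. The forward direction, however, has a genuine gap exactly where you say it does, and that gap is the entire substance of the paper's argument. Producing, for each fixed $\bm{\zeta}$, a nonzero limit vector in $W_{\bm{\zeta}}$ is the easy half; the claim that the set of columns of $\Gamma(A,\lambda)$ surviving the limit is \emph{nonempty and independent of $\bm{\zeta}$} is not a routine continuity statement. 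Different perturbations $\bm{\zeta}$ split the root $\lambda$ into different collections of perturbed roots $\lambda^{(p)}(\bm{\zeta};\delta)$ and select different pairs of $\lambda$-maximal multi-circuits as the ``tie-breaking'' ones, so a priori each $\bm{\zeta}$ could contribute a different subset of columns and the intersection could collapse to $\{\mathcal{E}\}$. The paper closes this by an intrinsic combinatorial characterization (the valid equivalence classes of Definition \ref{validclass}) and by Lemma \ref{shortlong}, which uses the maps $\varphi_{\mathcal{C}}$ and $\psi_{\mathcal{C}}$ to show that every valid class contains an index lying on the longest but not the shortest $\lambda$-maximal multi-circuit of $A$ itself; since those extremal multi-circuits remain extremal for the extremal perturbed roots $\lambda^{(1)}$ and $\lambda^{(q)}$ for \emph{every} $\bm{\zeta}$, the corresponding column survives in every $W_{\bm{\zeta}}$. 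That two-sided argument (the content of Proposition \ref{modifeigspace}) is what you have deferred as a ``remaining technical obstacle''; without it the forward implication is a plan, not a proof.

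A second, smaller gap is the case $\lambda=\varepsilon$. Your suggestion to take limits of $\mathrm{adj}(A\oplus\mu\otimes E_{n})$ as $\mu\to-\infty$ does not match how the space $W(A,\varepsilon)$ is actually controlled: the paper abandons the adjugate description there, describes $\lim_{\delta\to+0}W(A(\bm{\zeta};\delta),\varepsilon,\mathcal{C}(\bm{\zeta}))$ via the linear maps $\xi_{A,\mathcal{C}}$, proves the forward implication by following an algebraic eigenvector of $A\oplus\mu\otimes E_{n}$ with respect to $\mu$ (normalized so its maximum entry is $0$, with entries affine in $\mu$) down to $\mu=\varepsilon$, and proves the converse by a nonsingularity argument through the tropical kernel and Proposition \ref{ker}. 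None of this is reachable from the normalized-adjugate route you sketch, so the $\varepsilon$ case needs a separate treatment.
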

\begin{proposition} \label{dimsub2}
	The dimension of $W(A,\lambda)$ does not exceed the multiplicity of $\lambda$ in the characteristic polynomial of $A$.
\end{proposition}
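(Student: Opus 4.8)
The plan is to push the bound of Proposition~\ref{dimsub} through the perturbation that defines $W(A,\lambda)$. The delicate point is that, over the max-plus algebra, the dimension of a subspace is \emph{not} monotone under inclusion: a subspace of $\mathbb{R}_{\max}^{n}$ can have dimension larger than $n$ (for $n\ge 4$, the span of the $\binom{n}{2}$ vectors each having $e$ in exactly two coordinates and $\varepsilon$ elsewhere has dimension $\binom{n}{2}>n$, while $\mathbb{R}_{\max}^{n}$ has dimension $n$). Consequently it is \emph{not} enough to sandwich $W(A,\lambda)$ inside a subspace of dimension at most $m$, where $m$ denotes the multiplicity of $\lambda$ in $\chi_{A}(t)$; one has to exhibit an actual generating set of $W(A,\lambda)$ of size at most $m$.

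First I fix $\bm{\zeta}\in Z$ and a small $\delta>0$, and let $\lambda^{(1)},\dots,\lambda^{(q)}$ be the roots of $\chi_{A(\bm{\zeta};\delta)}(t)$ in $\mathcal{B}(\lambda,n\delta)$, with multiplicities $m_{1},\dots,m_{q}$ summing to $m$. Since $A(\bm{\zeta};\delta)$ satisfies~($\bigstar$), Proposition~\ref{dimsub} gives $\dim W(A(\bm{\zeta};\delta),\lambda^{(p)},\mathcal{C}^{(p)})\le m_{p}$, and by Propositions~\ref{balc}, \ref{balc-adj} together with Proposition~\ref{eigvec1} a basis of this space may be taken among the columns of $\Gamma(A(\bm{\zeta};\delta),\lambda^{(p)})$ up to scaling. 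Because a union of bases spans a max-plus sum of subspaces, $S(\bm{\zeta};\delta):=\bigoplus_{p}W(A(\bm{\zeta};\delta),\lambda^{(p)},\mathcal{C}^{(p)})$ is then spanned by a family of at most $m$ columns, drawn from the matrices $\Gamma(A(\bm{\zeta};\delta),\lambda^{(p)})$, $p=1,\dots,q$.

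Next I let $\delta\to+0$. Since $A(\bm{\zeta};\delta)\to A$ entrywise and every $\lambda^{(p)}=\lambda^{(p)}(\bm{\zeta};\delta)\to\lambda$, each matrix $\Gamma(A(\bm{\zeta};\delta),\lambda^{(p)})$ converges to the \emph{same} matrix $\Gamma(A,\lambda)$, which is independent of $\bm{\zeta}$. After normalizing the chosen spanning columns and passing to a subsequence $\delta_{k}\to 0$ along which the selected index sets stabilize and the columns converge, a routine compactness argument (the scalar coefficients in any max-plus combination realizing a point of the limit are bounded, so a further diagonal subsequence converges) shows that $T(\bm{\zeta}):=\lim_{\delta\to+0}S(\bm{\zeta};\delta)$ is contained in the span of those at most $m$ limit vectors, each a scalar multiple of a column of $\Gamma(A,\lambda)$; hence $T(\bm{\zeta})\subseteq\langle\,j\text{th column of }\Gamma(A,\lambda):j\in J(\bm{\zeta})\,\rangle$ for some $J(\bm{\zeta})\subseteq\{1,\dots,n\}$ with $|J(\bm{\zeta})|\le m$. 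Finally $W(A,\lambda)=\bigcap_{\bm{\zeta}\in Z}T(\bm{\zeta})$, and the task is to conclude that this intersection is itself generated by at most $m$ columns of the fixed matrix $\Gamma(A,\lambda)$; once that is established, $\dim W(A,\lambda)\le m$ is immediate because a generating set of size $\le m$ exists.

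The main obstacle is exactly this last step: since dimension is not monotone, knowing $W(A,\lambda)\subseteq T(\bm{\zeta}_{0})$ with $\dim T(\bm{\zeta}_{0})\le m$ does not by itself bound $\dim W(A,\lambda)$, and one must genuinely exploit that every $T(\bm{\zeta})$ lies inside the column span of one and the same matrix $\Gamma(A,\lambda)$ — for example by proving that the intersection over $\bm{\zeta}\in Z$ of spans of column-subsets of $\Gamma(A,\lambda)$ is again the span of a column-subset, of size at most $\min_{\bm{\zeta}}|J(\bm{\zeta})|\le m$. The only other technical ingredient is the compactness/semicontinuity argument controlling the limit $\lim_{\delta\to+0}S(\bm{\zeta};\delta)$ of the perturbed subspaces, which is standard once the generators are suitably normalized.
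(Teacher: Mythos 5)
Your overall strategy --- perturb to a matrix satisfying ($\bigstar$), apply Proposition~\ref{dimsub} to each root $\lambda^{(p)}(\bm{\zeta};\delta)$, and pass to the limit $\delta\to+0$ to exhibit at most $m=\sum_{p}m_{p}$ generators among the columns of $\Gamma(A,\lambda)$ --- is exactly the paper's, and your warning that max-plus dimension is not monotone under inclusion (so a spanning set, not a mere ambient bound, is required) is correct and pertinent. But the step you yourself flag as ``the main obstacle,'' namely controlling the intersection $\bigcap_{\bm{\zeta}\in Z}T(\bm{\zeta})$, is left genuinely open: you only \emph{suggest} that an intersection of spans of column-subsets of $\Gamma(A,\lambda)$ might again be a span of a column-subset, and this is not proved (nor obviously true; intersections of tropical spans are delicate). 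The paper closes this gap differently and earlier: the proof of Proposition~\ref{modifeigspace} (via Lemma~\ref{shortlong}) shows that the limit space $\tilde{W}=\lim_{\delta\to+0}\bigoplus_{p}W(A(\bm{\zeta};\delta),\lambda^{(p)},\mathcal{C}^{(p)})$ equals $W'(A,\lambda)$, the span of the columns of $\Gamma(A,\lambda)$ lying in valid equivalence classes, hence is \emph{independent of} $\bm{\zeta}$. The intersection over $Z$ is therefore redundant, $W(A,\lambda)=\tilde{W}$ for any single $\bm{\zeta}$, and the proof of Proposition~\ref{subspace} already exhibits $\tilde{W}$ as spanned by $\{\bm{g}_{i}\mid i\in\bigcup_{p}I^{(p)}\}$ with $|I^{(p)}|\le m_{p}$; summing multiplicities finishes the argument. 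Without some substitute for this independence statement your proof does not go through.

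A second, smaller omission: you treat only $\lambda\neq\varepsilon$. For $\lambda=\varepsilon$ the column-of-$\Gamma$ machinery (Propositions~\ref{balc} and~\ref{balc-adj}) is not available, since $A_{\mathcal{C}}\oplus\varepsilon\otimes E_{\setminus\mathcal{C}}$ need not be invertible, and the paper gives a separate argument using the maps $\xi_{A,\mathcal{C}}$ and the matrix $X$ representing their composite, showing $W(A,\varepsilon)$ is spanned by at most $n-\ell(\mathcal{C}_{1})$ vectors of the form $X^{\otimes n}\otimes\bm{e}_{j}$ with $j\notin V(\mathcal{C}_{1})$, where $n-\ell(\mathcal{C}_{1})$ is precisely the multiplicity of $\varepsilon$. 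Finally, in your limiting step you should claim equality, not just containment, of $T(\bm{\zeta})$ with the span of the limit columns; containment alone is useless here for exactly the non-monotonicity reason you raise at the outset.
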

\begin{proposition} \label{generalize}
	For any $\lambda$-maximal multi-circuit $\mathcal{C}$, we have $W(A,\lambda) \subset W(A,\lambda,\mathcal{C})$.
	Furthermore, if $A$ satisfies the assumption {\rm ($\bigstar$)}, then $W(A,\lambda) = W(A,\lambda,\mathcal{C})$.
\end{proposition}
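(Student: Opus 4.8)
The plan is to prove the two inclusions separately. For $\bm{\zeta}\in Z$ write $T_{\bm{\zeta}}$ for the set $\lim_{\delta\to+0}\bigoplus_{p}W(A(\bm{\zeta};\delta),\lambda^{(p)}(\bm{\zeta};\delta),\mathcal{C}^{(p)}(\bm{\zeta};\delta))$, so that $W(A,\lambda)=\bigcap_{\bm{\zeta}\in Z}T_{\bm{\zeta}}$. To obtain $W(A,\lambda)\subseteq W(A,\lambda,\mathcal{C})$ it suffices to prove $T_{\bm{\zeta}}\subseteq W(A,\lambda,\mathcal{C})$ for a single $\bm{\zeta}\in Z$, and for this I would first record two observations. (i) Equation~\eqref{algeigveceq} is stable under limits: if $A_{k}\to A$ entrywise, $\lambda_{k}\to\lambda$ in $\mathbb{R}_{\max}$, $\bm{x}_{k}\to\bm{x}$, a multi-circuit $\mathcal{D}$ is held fixed, and each $\bm{x}_{k}$ satisfies \eqref{algeigveceq} for $(A_{k},\lambda_{k},\mathcal{D})$, then $\bm{x}$ satisfies \eqref{algeigveceq} for $(A,\lambda,\mathcal{D})$; this is immediate because both sides of \eqref{algeigveceq} are componentwise maxima of finitely many continuous functions of the matrix and vector entries. (ii) Since $w_{A(\bm{\zeta};\delta)}(\mathcal{D})\to w_{A}(\mathcal{D})$ for every multi-circuit $\mathcal{D}$ as $\delta\to+0$, and since a multi-circuit $\mathcal{D}$ with $w_{A}(\mathcal{D})+(n-\ell(\mathcal{D}))\lambda<\chi_{A}(\lambda)$ retains a strict inequality under a small perturbation of $A$ and of $\lambda$, for all sufficiently small $\delta$ every $\lambda'$-maximal multi-circuit of $A(\bm{\zeta};\delta)$ with $\lambda'\in\mathcal{B}(\lambda,n\delta)$ is $\lambda$-maximal for $A$.

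Now fix $\bm{\zeta}\in Z$ and $\bm{x}\in T_{\bm{\zeta}}$, say $\bm{x}=\lim_{\delta\to+0}\bm{x}(\delta)$ with $\bm{x}(\delta)=\bigoplus_{p}\bm{x}_{p}(\delta)$ and $\bm{x}_{p}(\delta)\in W(A(\bm{\zeta};\delta),\lambda^{(p)}(\bm{\zeta};\delta),\mathcal{C}^{(p)}(\bm{\zeta};\delta))$. By Propositions~\ref{balc}, \ref{eigvec1} and \ref{balc-adj}, each such $W(A(\bm{\zeta};\delta),\lambda^{(p)},\mathcal{C}^{(p)})$ is spanned by a fixed sub-collection of the nonzero columns of $\Gamma(A(\bm{\zeta};\delta),\lambda^{(p)})=\mathrm{adj}(A(\bm{\zeta};\delta)\oplus\lambda^{(p)}\otimes E_{n})$, which I normalize to have largest entry $e$. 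Because there are only finitely many multi-circuits, I may pass to a sequence $\delta_{k}\to+0$ along which every $\mathcal{C}^{(p)}(\bm{\zeta};\delta_{k})$ equals a fixed multi-circuit $\mathcal{C}^{(p)}_{*}$ (which is $\lambda$-maximal for $A$ by (ii)) and every $\lambda^{(p)}(\bm{\zeta};\delta_{k})\to\lambda$. Writing $\bm{x}(\delta_{k})$ as a max-plus combination of the normalized spanning columns, the coefficients are bounded above since $\bm{x}(\delta_{k})\to\bm{x}$, while the columns converge to columns of $\Gamma(A,\lambda)$ by continuity of the determinant; after a further subsequence the coefficients converge as well, so $\bm{x}$ is a max-plus combination of vectors each of which is a limit of normalized columns lying in $W(A(\bm{\zeta};\delta_{k}),\lambda^{(p)},\mathcal{C}^{(p)}_{*})$. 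By (i), each such limit satisfies \eqref{algeigveceq} for $(A,\lambda,\mathcal{C}^{(p)}_{*})$, hence lies in $W(A,\lambda,\mathcal{C}^{(p)}_{*})=W(A,\lambda,\mathcal{C})$, the last equality being the independence of the algebraic eigenspace from the choice of $\lambda$-maximal multi-circuit (the proposition recorded before Proposition~\ref{dimsub}). Since $W(A,\lambda,\mathcal{C})$ is a subspace, being the solution set of a balance equation, $\bm{x}\in W(A,\lambda,\mathcal{C})$.

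For the reverse inclusion when $A$ satisfies ($\bigstar$), observe first that then the matrix $\bm{\zeta}$ with all entries $0$ lies in $Z$, $A(\bm{\zeta};\delta)=A$, and $\mathcal{B}(\lambda,n\delta)$ meets the root set of $\chi_{A}$ only in $\lambda$ for small $\delta$, so $T_{\bm{\zeta}}=W(A,\lambda,\mathcal{C})$ for this choice; more generally $A(s\bm{\zeta};\delta)=A(\bm{\zeta};s\delta)$ shows, substituting $u=s\delta$ in the limit, that $Z$ is star-shaped about that point and that $T_{s\bm{\zeta}}=T_{\bm{\zeta}}$ for all $s\in(0,1]$. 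By the inclusion already proved, $W(A,\lambda)=W(A,\lambda,\mathcal{C})$ is thus equivalent to $W(A,\lambda,\mathcal{C})\subseteq T_{\bm{\zeta}}$ for every $\bm{\zeta}\in Z$. Now a basis of $W(A,\lambda,\mathcal{C})$ is given, by Propositions~\ref{balc}, \ref{eigvec1} and \ref{balc-adj}, by the columns of $\Gamma(A,\lambda)$ indexed by one critical vertex of $B_{A,\lambda,\mathcal{C}}$ from each connected component of its critical graph. For such a basis column $\bm{g}$ with index $j$, the natural approximant is the $j$-th column of $\Gamma(A(\bm{\zeta};\delta),\lambda^{(p)})$ for a suitably chosen one of the roots $\lambda^{(p)}(\bm{\zeta};\delta)$: it converges to $\bm{g}$ after normalization (continuity of determinants, $\lambda^{(p)}\to\lambda$), and by Proposition~\ref{balc-adj} it lies in $W(A(\bm{\zeta};\delta),\lambda^{(p)},\mathcal{C}^{(p)})$ as soon as the corresponding vertex is critical for $B_{A(\bm{\zeta};\delta),\lambda^{(p)},\mathcal{C}^{(p)}}$. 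Thus the reverse inclusion reduces to the following persistence statement: \emph{if a vertex is critical for $B_{A,\lambda,\mathcal{C}}$, then for every $\bm{\zeta}\in Z$ and all small $\delta$ the corresponding vertex is critical for $B_{A(\bm{\zeta};\delta),\lambda^{(p)},\mathcal{C}^{(p)}}$ for at least one of the roots $\lambda^{(p)}(\bm{\zeta};\delta)\in\mathcal{B}(\lambda,n\delta)$}. Granting it, the subspace $T_{\bm{\zeta}}$ contains every basis vector of $W(A,\lambda,\mathcal{C})$, so $W(A,\lambda,\mathcal{C})\subseteq T_{\bm{\zeta}}$, which finishes the proof.

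The one substantial step is this persistence statement, and it cannot be bypassed by counting dimensions: in the max-plus algebra $T\subseteq S$ together with $\dim T=\dim S$ does not imply $T=S$, even for subspaces spanned by columns of a single matrix. I would approach it by studying, for $t$ near $\lambda$, the piecewise-linear functions $t\mapsto[\Gamma(A(\bm{\zeta};\delta),t)]_{ij}=\det(A(\bm{\zeta};\delta)\oplus t\otimes E_{n})^{(j,i)}$ alongside $\chi_{A(\bm{\zeta};\delta)}(t)$: criticality of a vertex for $A$ at $\lambda$ corresponds to a tangency among these functions at $t=\lambda$, and, since all of them vary continuously as $\delta\to+0$ while $\lambda$ splits into the $m$ roots $\lambda^{(p)}(\bm{\zeta};\delta)$ counted with multiplicity, a pigeonhole-and-continuity argument should show that the tangency persists at one of the $\lambda^{(p)}$. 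The delicate bookkeeping is to follow which $\lambda$-maximal multi-circuit $\mathcal{C}^{(p)}_{*}$ of $A$ each $\mathcal{C}^{(p)}(\bm{\zeta};\delta)$ deforms into and to reconcile the several column-permutations supplied by Proposition~\ref{balc-adj} for the different roots; this is the part I expect to be the hardest. The case $\lambda=\varepsilon$ is analogous and simpler, since $\mathcal{B}(\varepsilon,n\delta)=\{\varepsilon\}$ and the eigenvalue does not split.
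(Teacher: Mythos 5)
Your argument for the inclusion $W(A,\lambda)\subset W(A,\lambda,\mathcal{C})$ is sound and is in fact more explicit than what the paper writes down: the two observations (continuity of the balance equation \eqref{algeigveceq} in $(A,\lambda,\bm{x})$ for a fixed multi-circuit, and the fact that for small $\delta$ every $\lambda'$-maximal multi-circuit of $A(\bm{\zeta};\delta)$ with $\lambda'\in\mathcal{B}(\lambda,n\delta)$ is $\lambda$-maximal for $A$) are correct, and the decomposition into normalized spanning columns with bounded coefficients is the same device the paper uses in its proof of Proposition \ref{subspace}. Citing the $\mathcal{C}$-independence of $W(A,\lambda,\cdot)$ among $\lambda$-maximal multi-circuits to conclude is legitimate.

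The second half, however, has a genuine gap, and you have located it yourself but not closed it. Knowing that $\bm{0}\in Z$ and $T_{\bm{0}}=W(A,\lambda,\mathcal{C})$ only bounds the intersection $\bigcap_{\bm{\zeta}\in Z}T_{\bm{\zeta}}$ from above; to get equality you must show $W(A,\lambda,\mathcal{C})\subseteq T_{\bm{\zeta}}$ for \emph{every} $\bm{\zeta}\in Z$, i.e.\ your ``persistence statement.'' This is precisely the hard content of the proposition, and it is what the paper spends Lemma \ref{shortlong} and Proposition \ref{modifeigspace} on: one shows that for any $\bm{\zeta}\in Z$ the space $T_{\bm{\zeta}}$ is spanned by the columns of $\Gamma(A,\lambda)$ lying in \emph{valid} equivalence classes, and the nontrivial direction (every valid class survives the perturbation) is proved combinatorially via the maps $\varphi_{\mathcal{C}}$ and $\psi_{\mathcal{C}}$: Lemma \ref{shortlong} produces, inside each valid class, an index in $V(\mathcal{C}_{l})\setminus V(\mathcal{C}_{s})$ for the longest and shortest $\lambda$-maximal multi-circuits, and these two multi-circuits persist as the extreme maximal multi-circuits of $A(\bm{\zeta};\delta)$ across the split roots $\lambda^{(p)}(\bm{\zeta};\delta)$, which forces the corresponding vertex to be critical for some intermediate $\lambda^{(p)}$. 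Your proposed ``tangency of piecewise-linear functions plus pigeonhole'' route is only a plan; the bookkeeping you flag as hard (tracking which $\lambda$-maximal multi-circuit each $\mathcal{C}^{(p)}(\bm{\zeta};\delta)$ deforms into, and reconciling the column permutations of Proposition \ref{balc-adj}) is exactly where the combinatorial argument is needed, and without it the proof of the equality under ($\bigstar$) is incomplete. Note also that under ($\bigstar$) the statement is not vacuous even though it looks like a one-line specialization: the intersection over $\bm{\zeta}$ could a priori be strictly smaller than $T_{\bm{0}}$, so the $\bm{\zeta}$-independence of $T_{\bm{\zeta}}$ must be proved, not assumed.
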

The proofs of these propositions are presented in Section 5.
According to Proposition \ref{generalize}, we call $W(A,\lambda)$ the algebraic eigenspace of $A$ with respect to $\lambda$
without fear of confusion.
Nontrivial vectors in $W(A,\lambda)$ are called algebraic eigenvectors of $A$ with respect to $\lambda$.
\par
By Proposition \ref{generalize}, the algebraic eigenspace $W(A,\lambda)$ for $\lambda \neq \varepsilon$ 
is spanned by certain columns of $\Gamma(A,\lambda) := \mathrm{adj}(A \oplus \lambda \otimes E_{n})$.
To find the spanning set of $W(A,\lambda)$, we investigate the relation between
$\lambda$-maximal multi-circuits in $G(A)$ and critical circuits in $G(B_{A,\lambda,\mathcal{C}})$.
For a multi-circuit $\mathcal{C}$, the corresponding permutation $\sigma_{\mathcal{C}}$ is defined as follows:
$\sigma_{\mathcal{C}}(i)$ is the succeeding vertex of $i$ in $\mathcal{C}$ if $i \in V(\mathcal{C})$
and otherwise $\sigma_{\mathcal{C}}(i) = i$.
Let $\overleftarrow{\mathcal{C}}$ denote the multi-circuit obtained by reversing all edges in a multi-circuit $\mathcal{C}$.
The following procedure is introduced in~\cite{Nishida2020}.
\begin{definition} \label{mapvarphi}
	Let $\lambda \in \mathbb{R}$ be an algebraic eigenvalue of $A \in \mathbb{R}_{\max}^{n \times n}$
	and $\mathcal{C}$ be a $\lambda$-maximal multi-circuit.
	Further, let $\mathcal{D}$ be a multi-circuits consisting of some critical circuits in $G(B_{A,\lambda,\mathcal{C}})$.
	We define a $\lambda$-maximal multi-circuit $\mathcal{C}'$ in $G(A)$ by the procedure below 
	and denote it by $\varphi_{\mathcal{C}}(\mathcal{D})$.
	\begin{enumerate}
		\item 
		Set $\mathcal{C}' := \emptyset$.
		\item 
		Choose any edge of $\mathcal{D}$ that is not in $E(\overleftarrow{\mathcal{C}})$
		and denote the terminal vertex of that edge by $i$.
		We define the initial sequence of vertices by $\hat{C} := (i)$.
		\item 
		The succeeding vertex of $i$ in $\hat{C}$ is determined by the following rules.
		\begin{enumerate}
			\item
			 If $i \not\in V(\mathcal{C})$, let $i'$ be the succeeding vertex of $i$ in $\mathcal{D}$.
			Append $i'$ to $\hat{C}$ and set $i:=i'$.
			\item 
			If $i \in V(\mathcal{C})$ and $\sigma_{\mathcal{C}}(i) \not\in V(\mathcal{D})$, 
			append $\sigma_{\mathcal{C}}(i)$ to $\hat{C}$ and set $i:=\sigma_{\mathcal{C}}(i)$.
			\item 
			If $i \in V(\mathcal{C})$ and $\sigma_{\mathcal{C}}(i) \in V(\mathcal{D})$, 
			let $i'$ be the succeeding vertex of $\sigma_{\mathcal{C}}(i)$ in $\mathcal{D}$.
			Append $i'$ to $\hat{C}$ and set $i:=i'$.
		\end{enumerate}
		\item 
		Repeat 3 until the original vertex $i$ selected in 2 appears again.
		If we return to $i$, append the circuit $\hat{C}$ to $\mathcal{C}'$.
		\item
		 Repeat 2--4 while there exist edges (or corresponding terminal vertices) satisfying 2.
		\item 
		Append all circuits in $\mathcal{C}$ that have no common vertices with $\mathcal{D}$ to $\mathcal{C}'$.
		\item 
		Find all loops on $V(\mathcal{D}) \setminus V(\mathcal{C}')$ whose weights are greater than $\lambda$.
		Append them to $\mathcal{C}'$.
	\end{enumerate}
\end{definition}
The inverse of this procedure is defined as follows.
\begin{definition} \label{mappsi}
	Let $\lambda \in \mathbb{R}$ be an algebraic eigenvalue of $A \in \mathbb{R}_{\max}^{n \times n}$
	and $\mathcal{C, \mathcal{C}'}$ be $\lambda$-maximal multi-circuits in $G(A)$.
	We define a multi-circuit $\mathcal{D}$ consisting of critical circuits in $G(B_{A,\lambda,\mathcal{C}})$
	by the procedure below and denote it by $\psi_{\mathcal{C}}(\mathcal{C}')$.
	\begin{enumerate}
		\item 
		Set $\mathcal{D} := \emptyset$.
		\item 
		Choose any vertex $i \in V(\mathcal{C}') \setminus V(\mathcal{C})$.
		We define the initial sequence of vertices by $\hat{D} := (i)$.
		\item 
		The succeeding vertex of $i$ in $\hat{D}$ is determined by the following rules.
		\begin{enumerate}
			\item
			If $i \in V(\mathcal{C}') \setminus V(\mathcal{C})$, then append $\sigma_{\mathcal{C}'}(i)$ to $\hat{D}$ 
			and set $i := \sigma_{\mathcal{C}'}(i)$.
			\item 
			If $i \in V(\mathcal{C})$, let $i'$ be the preceding vertex of $i \in \mathcal{C}$. 
			If $i' \in V(\mathcal{C}')$, then append $\sigma_{\mathcal{C}'}(i')$ to $\hat{D}$ and set $i := \sigma_{\mathcal{C}'}(i')$;
			otherwise append $i'$ to $\hat{D}$ and set $i := i'$
		\end{enumerate}
		\item 
		Repeat 3 until the original vertex $i$ selected in 2 appears again.
		If we return to $i$, append the circuit $\hat{D}$ to $\mathcal{D}$.
		\item
		 Repeat 2--4 while there remains a vertex in $V(\mathcal{C}') \setminus V(\mathcal{C})$.
	\end{enumerate}
\end{definition}
Now, we describe the way to detect which column of $\Gamma(A,\lambda)$
 is an algebraic eigenvector of $A$ with respect to $\lambda$.
\begin{definition} \label{validclass}
	Let $A \in \mathbb{R}_{\max}^{n \times n}$ and $\lambda \in \mathbb{R}$.
	We consider the following equivalence relation on the set $\{1,2,\dots,n\}$:
	if the $i$th column of $\Gamma(A,\lambda)$ is a scalar multiple of the $j$th column of it, then $i$ and $j$ are equivalent.
	An equivalent class $H$ is called valid if there exists two distinct $\lambda$-maximal multi-circuits 
	$\mathcal{C}$ and $\mathcal{C}'$ with $\ell(\mathcal{C}) < \ell(\mathcal{C}')$ 
	such that $V(\psi_{\mathcal{C}}(\mathcal{C}')) \subset \sigma_{\mathcal{C}}(H)$ 	in $G(B_{A,\lambda,\mathcal{C}})$.
\end{definition}
\begin{proposition} \label{modifeigspace}
	For each algebraic eigenvalue $\lambda \in \mathbb{R}$ of $A \in \mathbb{R}_{\max}^{n \times n}$, 
	$W(A, \lambda)$ is spanned by the columns of $\Gamma(A,\lambda)$ corresponding to indices in valid equivalent classes.
\end{proposition}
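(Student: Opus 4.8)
The plan is to compute the right-hand side of the defining formula \eqref{modalgeigsp} by reducing every summand to the situation of assumption ($\bigstar$), which the perturbed matrices $A(\bm{\zeta};\delta)$ satisfy, and then to track what survives the limit $\delta\to+0$ and the intersection over $\bm{\zeta}\in Z$. Fix $\bm{\zeta}\in Z$ and $\delta>0$ small. For a root $\lambda'$ of $\chi_{A(\bm{\zeta};\delta)}(t)$ in $\mathcal{B}(\lambda,n\delta)$, Proposition \ref{generalize} (applicable since $A(\bm{\zeta};\delta)$ satisfies ($\bigstar$)) together with Proposition \ref{balc} identifies the summand $W(A(\bm{\zeta};\delta),\lambda',\mathcal{C}_{\lambda'}(\bm{\zeta};\delta))$ with the eigenspace $U(B,0)$ of $B:=B_{A(\bm{\zeta};\delta),\lambda',\mathcal{C}_{\lambda'}(\bm{\zeta};\delta)}$ with respect to $0$. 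By Proposition \ref{eigvec1} this eigenspace is spanned by the columns $\bm{g}_k$ of $B^{*}$ with $k$ running over one vertex per connected component of the critical graph $G^{c}(B)$, and by Proposition \ref{balc-adj} each $\bm{g}_k$ is a scalar multiple of the $\sigma_{\mathcal{C}_{\lambda'}(\bm{\zeta};\delta)}(k)$th column of $\Gamma(A(\bm{\zeta};\delta),\lambda')$. Hence the finite sum over the roots $\lambda'$ in $\mathcal{B}(\lambda,n\delta)$ is spanned by a bounded family of columns of the matrices $\Gamma(A(\bm{\zeta};\delta),\lambda')$.

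Next I would pass to the limit $\delta\to+0$. Each cofactor determinant is a continuous function of the entries, and every root $\lambda^{(p)}(\bm{\zeta};\delta)$ in $\mathcal{B}(\lambda,n\delta)$ tends to $\lambda$, so each of the above columns converges to the corresponding column of $\Gamma(A,\lambda)$. After normalizing the generating eigenvectors (fixing one coordinate so they stay bounded), one checks that $\lim_{\delta\to+0}$ of the sum of these eigenspaces equals the max-plus span of the limiting columns; this is exactly the type of limit argument prepared for Section 5. Thus the limit is $\mathrm{span}(S_{\bm{\zeta}})$, where $S_{\bm{\zeta}}$ is a union of equivalence classes of columns of $\Gamma(A,\lambda)$ in the sense of Definition \ref{validclass} (columns that become proportional in the limit belong to one class).

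It then remains to identify $S_{\bm{\zeta}}$ combinatorially and to carry out the intersection over $\bm{\zeta}$. For small $\delta$, the $\lambda'$-maximal multi-circuits of $A(\bm{\zeta};\delta)$ with $\lambda'$ near $\lambda$ are precisely the $\lambda$-maximal multi-circuits of $A$ whose $\bm{\zeta}$-perturbed weight is extremal at $\lambda'$; correspondingly, after scaling, $B$ degenerates to a matrix of the form $B_{A,\lambda,\mathcal{C}}$ whose critical graph is governed, through the maps $\varphi_{\mathcal{C}}$ and $\psi_{\mathcal{C}}$ of Definitions \ref{mapvarphi} and \ref{mappsi}, by those $\lambda$-maximal multi-circuits $\mathcal{C}'$ of $A$ that $\bm{\zeta}$ keeps active at a common root together with $\mathcal{C}$; chasing a critical vertex $k$ through the relation between $\bm{g}_k$ and the $\sigma_{\mathcal{C}}(k)$th column of $\Gamma(A,\lambda)$ then expresses membership in $S_{\bm{\zeta}}$ in terms of the relation $V(\psi_{\mathcal{C}}(\mathcal{C}'))\subset\sigma_{\mathcal{C}}(H)$ for pairs $\mathcal{C},\mathcal{C}'$ with $\ell(\mathcal{C})<\ell(\mathcal{C}')$. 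Finally, since $\chi_{A}(t)$ has a corner at $\lambda$ whose slope drops from $n-\ell_{\min}$ to $n-\ell_{\max}$, where $\ell_{\min},\ell_{\max}$ are the extreme lengths of $\lambda$-maximal multi-circuits of $A$, a convex-hull argument on the admissible lengths shows that $\bigcap_{\bm{\zeta}\in Z}S_{\bm{\zeta}}$ is exactly the set of valid classes and that this common value is already attained by a suitably generic $\bm{\zeta}$, whence $W(A,\lambda)=\mathrm{span}(S_{\bm{\zeta}})$ equals the span of the columns of $\Gamma(A,\lambda)$ indexed by valid classes, giving both inclusions at once.

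The main obstacle I expect is the interface between the analytic limit and the discrete data: establishing that the $\delta\to+0$ limit of the max-plus sum of perturbed algebraic eigenspaces is precisely $\mathrm{span}(S_{\bm{\zeta}})$ — introducing no spurious vectors and discarding none that are needed — and then matching the critical graph of the degenerating matrix $B$ with the $\varphi_{\mathcal{C}}/\psi_{\mathcal{C}}$ correspondence on $\lambda$-maximal multi-circuits of $A$. In particular, proving that two $\lambda$-maximal multi-circuits of different lengths cannot be ``separated'' by any admissible perturbation, which is what makes the valid classes exactly the robust ones, will require the combinatorial bookkeeping of Definitions \ref{mapvarphi}--\ref{validclass} together with the slope structure of $\chi_{A}(t)$.
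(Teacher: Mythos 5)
Your overall strategy coincides with the paper's: perturb $A$ to $A(\bm{\zeta};\delta)$ so that ($\bigstar$) holds, describe each summand via $B_{A(\bm{\zeta};\delta),\lambda',\mathcal{C}}$ and Proposition \ref{balc-adj} as a span of columns of $\Gamma(A(\bm{\zeta};\delta),\lambda')$, pass to the limit $\delta\to+0$ to land on columns of $\Gamma(A,\lambda)$, and then identify which equivalence classes of columns survive. Your limit step is sound and is exactly what the paper establishes in the proof of Proposition \ref{subspace} (continuity of the cofactors plus the normalization $\alpha_i^{(p)}(\delta)=\max\{\alpha\mid\alpha\otimes\bm{g}_i^{(p)}(\delta)\leq\bm{x}(\delta)\}$ to rule out spurious limit vectors). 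The inclusion ``$S_{\bm{\zeta}}\subset$ valid classes'' is also essentially immediate, as you indicate: under ($\bigstar$) a surviving index $i$ lies in $V(\mathcal{C}_1)\setminus V(\mathcal{C}_2)$ for two $\lambda^{(p)}(\bm{\zeta};\delta)$-maximal multi-circuits of different lengths, and these are $\lambda$-maximal for $A$ itself.

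The genuine gap is the reverse inclusion: you must show that \emph{every} valid class contributes a column to $\tilde{W}$ for \emph{every} $\bm{\zeta}\in Z$, since $W(A,\lambda)$ is an intersection over all $\bm{\zeta}$. The difficulty is that a class $H$ is declared valid by the existence of \emph{some} pair $\mathcal{C},\mathcal{C}'$ with $\ell(\mathcal{C})<\ell(\mathcal{C}')$ and $V(\psi_{\mathcal{C}}(\mathcal{C}'))\subset\sigma_{\mathcal{C}}(H)$, but a given perturbation $\bm{\zeta}$ will in general \emph{separate} that particular pair, i.e.\ split $\lambda$ into several roots $\lambda^{(p)}(\bm{\zeta};\delta)$ at none of which $\mathcal{C}$ and $\mathcal{C}'$ are simultaneously maximal. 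So your closing remark --- that one must prove two $\lambda$-maximal multi-circuits of different lengths ``cannot be separated by any admissible perturbation'' --- is aiming at a statement that is false as phrased; generic perturbations do separate most pairs. What actually saves the argument, and what the paper isolates as Lemma \ref{shortlong}, is that every valid class $H$ contains an index $j\in V(\mathcal{C}_l)\setminus V(\mathcal{C}_s)$, where $\mathcal{C}_s$ and $\mathcal{C}_l$ are the shortest and longest $\lambda$-maximal multi-circuits. These two extremal multi-circuits are robust: they govern the slopes of $\chi_A$ on either side of the corner at $\lambda$ and hence remain maximal at the largest and smallest perturbed roots for every $\bm{\zeta}\in Z$, so the index $j$ always survives into $S_{\bm{\zeta}}$. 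Proving that such a $j$ exists in $H$ is a nontrivial combinatorial argument with the maps $\varphi_{\mathcal{C}}$ and $\psi_{\mathcal{C}}$ and a length-balancing identity (one shows that the circuits of $\psi_{\mathcal{C}_2}(\mathcal{C}_s)$ meeting $\overleftarrow{D}$ are nonempty, then transports the longest multi-circuit through $\mathcal{C}_s$); your ``convex-hull argument on the admissible lengths'' does not supply this, and without it the intersection over $\bm{\zeta}$ could a priori be strictly smaller than the span of the valid classes.
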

\proof
	See Section 5.
\endproof
\begin{example} \label{algeigvecexm}
	Consider a matrix
	\begin{align*}
		A = \begin{pmatrix} 
			6 & 5 & 0 \\ 5 & 4 & \varepsilon \\ 0 & \varepsilon & 2
		\end{pmatrix}.
	\end{align*}
The algebraic eigenvalues of $A$ are $6, 4$ and $2$.
This matrix does not satisfy ($\bigstar$) because $G(A)$ contains two $\lambda$-maximal multi-circuits
$\{(1,1), (2,2), (3,3) \}$ and $\{(1,2,1), (3,3)\}$ with lengths $3$ for any $\lambda \leq 2$.
Take an algebraic eigenvalue $\lambda = 2$. There are four $\lambda$-maximal multi-circuits
$\{(1,1), (2,2) \}, \{(1,2,1)\}$, $\{(1,1), (2,2), (3,3)\}$ and $\{(1,2,1),(3,3)\}$.
Let $\mathcal{C} = \{(1,1), (2,2), (3,3)\}$.
We have
\begin{align*}
	(B_{A,\lambda,\mathcal{C}})^{*} = \begin{pmatrix} 
		0 & -1 & -6 \\ 1 & 0 & -5 \\ -2 & -3 & 0
	\end{pmatrix}
	= \Gamma(A,\lambda) \otimes \begin{pmatrix}
		-6 & \varepsilon & \varepsilon \\ \varepsilon & -5 & \varepsilon \\ \varepsilon & \varepsilon & -4
	\end{pmatrix}.
\end{align*}
All columns of $(B_{A,\lambda,\mathcal{C}})^{*}$ satisfy the equation \eqref{algeigveceq} for $\lambda = 2$.
However, only the third column is in a valid class because $\psi_{\mathcal{C}_{1}}(\mathcal{C}_{2}) = \{(3,3)\}$
if $\mathcal{C}_{1} = \{(1,1), (2,2) \}$ or $\{(1,2,1)\}$ and $\mathcal{C}_{2} = \{(1,1), (2,2), (3,3)\}$ or $\{(1,2,1),(3,3)\}$.
Hence, the algebraic eigenspace with respect to $\lambda = 2$ is
\begin{align*}
	W(A,\lambda) = \{ \alpha \otimes {}^t\!(-6,-5,0) \ |\ \alpha \in \mathbb{R}_{\max} \}.
\end{align*}
Note that ${}^t\!(0,1,-2)$, the first column of $(B_{A,\lambda,\mathcal{C}})^{*}$,
 satisfies the equation \eqref{algeigveceq} for any $\lambda \leq 2$,
which implies this vector is independent from the choice of $\lambda$ and hence does not suitable for an algebraic eigenvector.
\end{example}
\section{Independence of algebraic eigenvectors}
One important feature of eigenvectors in the conventional linear algebra is
that eigenvectors with respect to distinct eigenvalues are independent.
In this section, we discuss the max-plus analogue of this fact.
First, we see an example taken from~\cite{Niv2017}, which is written in the supertropical settings.
\par
\begin{example}
	Consider a matrix
	\begin{align*}
		A = \begin{pmatrix}
		10 & 10 & 9 & \varepsilon \\
		9 & 1 & \varepsilon & \varepsilon \\
		\varepsilon & \varepsilon & \varepsilon & 9 \\
		9 & \varepsilon & \varepsilon & \varepsilon 
		\end{pmatrix}.
	\end{align*}
	The algebraic eigenvalues of $A$ are $10, 9, 8, 1$ and the corresponding algebraic eigenvectors are
	\begin{align*}
		\bm{x}_{1} = {}^t\!(2,1,0,1), \quad \bm{x}_{2} = {}^t\!(0,0,0,0), \quad
		 \bm{x}_{3} = {}^t\!(0,1,2,1), \quad \bm{x}_{4} =  {}^t\!(0,15,16,8),
	\end{align*}
	respectively. 
	Let $P = \begin{pmatrix} \bm{x}_{1} & \bm{x}_{2} & \bm{x}_{3} & \bm{x}_{4} \end{pmatrix} \in \mathbb{R}_{\max}^{4 \times 4}$.
	Then, $\det P = 19$ is attained with three permutations: $(2\ 4), (3\ 4)$ and $(2\ 3\ 4)$.
	Hence, $P$ is singular.		
\end{example}
This example shows that algebraic eigenvectors $\bm{x}_{1}, \bm{x}_{2}, \bm{x}_{3}$ and $\bm{x}_{4}$ are dependent
in the sense of the tropical rank of the matrix~\cite{Develin2005}.
On the other hand, it is easily checked that none of the four vectors can be expressed
as the max-plus linear combination of the other three.
In this sense, $\bm{x}_{1}, \bm{x}_{2}, \bm{x}_{3}$ and $\bm{x}_{4}$ are independent.
In this section, we adopt the latter definition and show that
every basis vector of an algebraic eigenspace cannot be expressed as a linear combination of
other algebraic eigenvectors.
We first prove the following proposition.
\begin{proposition}
	Let $\lambda_{1}$ and $\lambda_{2}$ be distinct algebraic eigenvalues of $A = (a_{ij}) \in \mathbb{R}_{\max}^{n \times n}$.
	Then, we have $W(A,\lambda_{1}) \cap W(A,\lambda_{2}) = \{\mathcal{E}\}$.
\end{proposition}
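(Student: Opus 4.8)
The plan is to argue by contradiction, the crux being control of the support of a hypothetical common nonzero vector.

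Suppose $\bm{x} \neq \mathcal{E}$ lies in $W(A,\lambda_1) \cap W(A,\lambda_2)$. By Proposition \ref{ifonlyif}, $\lambda_1$ and $\lambda_2$ are algebraic eigenvalues of $A$; after renaming I may assume $\lambda_1 > \lambda_2$, so $\lambda_1 \in \mathbb{R}$. By Proposition \ref{generalize}, for every $\lambda_1$-maximal multi-circuit $\mathcal{C}_1$ and every $\lambda_2$-maximal multi-circuit $\mathcal{C}_2$ the vector $\bm{x}$ satisfies \eqref{algeigveceq} for $(\lambda_1,\mathcal{C}_1)$ and for $(\lambda_2,\mathcal{C}_2)$. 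Writing $\sigma_i := \sigma_{\mathcal{C}_i}$ and observing that the left-hand matrix of \eqref{algeigveceq} agrees with $A$ on the rows outside $V(\mathcal{C}_i)$ while the right-hand matrix is the generalized permutation matrix attached to $\sigma_i$, a row-by-row reading of \eqref{algeigveceq} gives, for each $k$, that $[A \otimes \bm{x}]_k = \lambda_i \otimes x_k$ when $k \notin V(\mathcal{C}_i)$ and that $[A \otimes \bm{x}]_k = a_{k\sigma_i(k)} \otimes x_{\sigma_i(k)} \geq \lambda_i \otimes x_k$ when $k \in V(\mathcal{C}_i)$. In particular $A \otimes \bm{x} \geq \lambda_1 \otimes \bm{x}$ entrywise.

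Next I would localize to the support $S := \{k : x_k \neq \varepsilon\} \neq \emptyset$. If some $k \in S$ lay outside $V(\mathcal{C}_2)$, then $\lambda_2 \otimes x_k = [A \otimes \bm{x}]_k \geq \lambda_1 \otimes x_k$ with $x_k$ finite, contradicting $\lambda_1 > \lambda_2$; hence $S \subseteq V(\mathcal{C}_2)$. Since $[A \otimes \bm{x}]_k \neq \varepsilon$ for $k \in S$ and this value is $a_{k\sigma_2(k)} \otimes x_{\sigma_2(k)}$, the set $S$ is $\sigma_2$-invariant, so $V(\mathcal{C}_2) \setminus S$ is too; it follows that $[A \otimes \bm{x}]_k = \varepsilon$ for every $k \notin S$, whence $a_{kj} = \varepsilon$ whenever $k \notin S$ and $j \in S$. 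Thus no edge of $G(A)$ enters $S$ from outside, so $A$ is block triangular along $(S, V(A)\setminus S)$, $\chi_A = \chi_{A[S]} \otimes \chi_{A[V(A)\setminus S]}$, each $\mathcal{C}_i$ splits into its circuits inside $S$ and its circuits inside $V(A)\setminus S$, the part $\mathcal{C}_i[S]$ is $\lambda_i$-maximal in $G(A[S])$, and $\bm{x}|_S$ (now everywhere finite) satisfies \eqref{algeigveceq} for $(A[S],\lambda_i,\mathcal{C}_i[S])$. Inducting on $n$ (after replacing $A$ by $A[S]$), one is reduced to the case $S = V(A)$, i.e. $\bm{x}$ everywhere finite.

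In that case, since $W(A,\lambda_2,\mathcal{C}) = W(A,\lambda_2,\mathcal{C}')$ for any two $\lambda_2$-maximal multi-circuits, $\bm{x}$ satisfies \eqref{algeigveceq} for $(\lambda_2,\mathcal{C}_2)$ for every $\lambda_2$-maximal $\mathcal{C}_2$. Combining $[A\otimes\bm{x}]_k \geq \lambda_1 \otimes x_k$ with $[A\otimes\bm{x}]_k = \lambda_2\otimes x_k$ for $k \notin V(\mathcal{C}_2)$, and using that $\bm{x}$ is finite and $\lambda_1 > \lambda_2$, forces $\ell(\mathcal{C}_2) = n$ for every $\lambda_2$-maximal $\mathcal{C}_2$. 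But $\lambda_2$ is a finite algebraic eigenvalue of $A$, so $G(A)$ carries two $\lambda_2$-maximal multi-circuits of different lengths — a contradiction. (If instead $\lambda_2 = \varepsilon$, the same argument works using that an $\varepsilon$-maximal multi-circuit has length strictly below $n$ when $\varepsilon$ is an algebraic eigenvalue.)

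The step I expect to be the main obstacle is the bookkeeping in the reduction: for matrices violating ($\bigstar$), equation \eqref{algeigveceq} can have nontrivial solutions at values that are not algebraic eigenvalues, so one must be sure the passage to $A[S]$ keeps $\bm{x}|_S$ inside the genuine algebraic eigenspaces (equivalently, keeps $\lambda_1,\lambda_2$ algebraic eigenvalues of $A[S]$). I would get around this by first carrying out the whole argument for matrices satisfying ($\bigstar$) — for which $W(A,\lambda) = W(A,\lambda,\mathcal{C})$, $A[S]$ again satisfies ($\bigstar$), and the induction closes cleanly — and then transferring to general $A$ via the perturbed matrices $A(\bm{\zeta};\delta)$ and the limit description of $W(A,\lambda)$ from Section 2.5.
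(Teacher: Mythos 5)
Your reduction is built on the same key identities the paper uses: for $\bm{x}\in W(A,\lambda_i,\mathcal{C}_i)$ one has $[A\otimes\bm{x}]_k=\lambda_i\otimes x_k$ for $k\notin V(\mathcal{C}_i)$ and $[A\otimes\bm{x}]_k=a_{k\sigma_i(k)}\otimes x_{\sigma_i(k)}\geq\lambda_i\otimes x_k$ for $k\in V(\mathcal{C}_i)$, and the observation that a finite $x_k$ with $k\notin V(\mathcal{C}_2)$ forces $\lambda_2\geq\lambda_1$ is exactly the paper's Case~1 inequality. The paper then restricts to the support $L$ and applies the same index-picking argument there; you instead package this as ``$S\subseteq V(\mathcal{C}_2)$ for every $\lambda_2$-maximal $\mathcal{C}_2$'', deduce block-triangularity, and induct. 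Up to and including the block-triangular reduction your steps are correct and use only Proposition~\ref{generalize}, i.e.\ the inclusion $W(A,\lambda)\subset W(A,\lambda,\mathcal{C})$, which is valid for every matrix.

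The genuine gap is in how you close the induction for matrices not satisfying ($\bigstar$). Your endgame needs $\lambda_2$ to still be an algebraic eigenvalue of $A[S]$ (so that some $\lambda_2$-maximal multi-circuit of $G(A[S])$ has length $<|S|$); under ($\bigstar$) you get this from Proposition~2.9, but for general $A$ a nontrivial solution of \eqref{algeigveceq} does not certify an algebraic eigenvalue (see the Remark after Proposition~\ref{dimsub}), and $\lambda_2$ could a priori be a root of $\chi_{A[V(A)\setminus S]}$ only. Your proposed repair --- prove the statement for ($\bigstar$) matrices and ``transfer'' via $A(\bm{\zeta};\delta)$ --- does not go through as stated: $W(A,\lambda_i)$ is defined in \eqref{modalgeigsp} as an intersection of \emph{limits} of the perturbed eigenspaces, and a vector $\bm{x}$ in $W(A,\lambda_1)\cap W(A,\lambda_2)$ is only the common limit of two \emph{different} families $\bm{x}_1(\delta)$, $\bm{x}_2(\delta)$ lying in subspaces of $A(\bm{\zeta};\delta)$ attached to nearby but distinct perturbed eigenvalues. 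Knowing that those subspaces meet only in $\mathcal{E}$ for each $\delta>0$ does not prevent their limits from sharing a nonzero vector (disjointness of subspaces is not preserved under the limit operation defined in Section~2.5), so an additional quantitative argument would be required. The paper avoids this entirely by never invoking ($\bigstar$) or the perturbation in this proof: it works directly on $A$ with the inclusion $W(A,\lambda)\subset W(A,\lambda,\mathcal{C})$ and derives the contradiction from the existence, for an algebraic eigenvalue of $A$ itself, of a $\lambda_2$-maximal multi-circuit of length $<n$. To repair your argument you should likewise extract the needed ``short multi-circuit inside $S$'' from the hypothesis that $\lambda_2$ is an algebraic eigenvalue of the original $A$, rather than routing through the perturbed matrices.
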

\begin{proof}
	Take a vector $\bm{x} = {}^t\!(x_{1},x_{2},\dots,x_{n}) \in W(A,\lambda_{1}) \cap W(A,\lambda_{2})$.
	Then, we have
	\begin{align*}
		(A \oplus \lambda_{1} \otimes E_{n}) \otimes \bm{x} = (A \oplus \lambda_{2} \otimes E_{n}) \otimes \bm{x}
		= A \otimes \bm{x}.
	\end{align*}
	We first consider the case $A \otimes \bm{x} \in \mathbb{R}$.
	Let $\mathcal{C}_{1}$ and $\mathcal{C}_{2}$ be $\lambda_{1}$-maximal and $\lambda_{2}$-maximal multi-circuits
	in $G(A)$, respectively.
	In this case, $\lambda_{1},\lambda_{2} \neq \varepsilon$ and we may assume that $\ell(\mathcal{C}_{1}), \ell(\mathcal{C}_{2}) < n$.
	Then, taking $i_{1} \not\in V(\mathcal{C}_{1})$ and $i_{2} \not\in V(\mathcal{C}_{2})$, we have
	\begin{align*}
		\lambda_{1} \otimes x_{i_{1}}  
			&= [(A_{\mathcal{C}_{1}} \oplus \lambda_{1} \otimes E_{\setminus\mathcal{C}_{1}}) \otimes \bm{x}]_{i_{1}} 
			= [A \otimes \bm{x}]_{i_{1}} \geq \lambda_{2} \otimes x_{i_{1}}, \\
		\lambda_{2} \otimes x_{i_{2}} 
			&= [(A_{\mathcal{C}_{2}} \oplus \lambda_{2} \otimes E_{\setminus\mathcal{C}_{2}}) \otimes \bm{x}]_{i_{2}}
			= [A \otimes \bm{x}]_{i_{2}} \geq \lambda_{1} \otimes x_{i_{2}}.
	\end{align*}
	Here, $[*]_{i}$ denotes the $i$th entry of the vector.
	Hence, we have $\lambda_{1} = \lambda_{2}$, which is a contradiction.
	\par
	If $[A \otimes \bm{x}]_{k} = \varepsilon$ for some $k$,
	let $K = \{ k \ |\ x_{k} = \varepsilon \}$ and $L = \{ k \ |\ x_{k} \neq \varepsilon \}$.
	Then, $a_{ij} = \varepsilon$ for all $(i,j) \in K \times L$.
	If we assume $\lambda_{1} > \lambda_{2} \geq \varepsilon$,
	$[A \otimes \bm{x}]_{k} \geq \lambda_{1} \otimes x_{k}$ implies $L \subset \{ k \ |\ [A \otimes \bm{x}]_{k} \neq \varepsilon \}$.
	Hence, restricting the calculation to rows and columns indexed by $L$, 
	we have $\lambda_{1} = \lambda_{2}$ similarly to the above case, which is a contradiction.
	Thus, we conclude that $\bm{x} = \mathcal{E}$.
\end{proof}
Recall that if $\mathcal{B}_{k}$ is a basis of a subspace $U_{k}$ for $k=1,2,\dots,m$, 
then $\bigcup_{k=1}^{m} \mathcal{B}_{k}$ spans $\bigoplus_{k=1}^{m} U_{k}$.
Now, we present the first main result of the paper.
\begin{theorem} \label{main1}
	Let $\Lambda$ be the set of all algebraic eigenvalues of $A \in \mathbb{R}_{\max}^{n \times n}$ and
	$\mathcal{B}_{\lambda}$ be a basis of the algebraic eigenspace with respect to $\lambda \in \Lambda$.
	If any two critical circuits in $G(B_{A,\lambda,\mathcal{C}})$ are disjoint for all $\lambda \in \mathbb{R}$ and
	$\lambda$-maximal multi-circuits $\mathcal{C}$,
	then $\bigcup_{\lambda \in \Lambda} \mathcal{B}_{\lambda}$ is a basis
	of the sum of all algebraic eigenspaces.
\end{theorem}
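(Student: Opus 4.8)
The plan is to show that for each fixed $\lambda_0 \in \Lambda$, no nonzero vector $\bm{x} \in W(A,\lambda_0)$ can be written as a max-plus linear combination of vectors taken from $\bigcup_{\lambda \in \Lambda \setminus \{\lambda_0\}} \mathcal{B}_{\lambda}$, together with vectors of $\mathcal{B}_{\lambda_0}$ other than the one representing $\bm{x}$. Since a max-plus basis is minimal and unique up to scaling (as recalled in Section 2.3), it suffices to prove: if $\bm{x} \in W(A,\lambda_0) \setminus \{\mathcal{E}\}$ and $\bm{x} = \bigoplus_{\lambda \neq \lambda_0} \bm{y}_\lambda \oplus \bm{z}$ with $\bm{y}_\lambda \in W(A,\lambda)$ and $\bm{z} \in \bigoplus_{\bm{b} \in \mathcal{B}_{\lambda_0},\, \bm{b} \not\sim \bm{x}} \langle \bm{b} \rangle$, then in fact $\bm{x} = \bm{z}$ (a scalar multiple of the chosen basis vector), contradicting minimality unless the decomposition is trivial. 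The previous proposition already handles pairwise intersections, so the new content is controlling the \emph{sum} of the other eigenspaces.

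First I would reduce to the generic case: by Proposition \ref{generalize} each $W(A,\lambda)$ with $\lambda \in \mathbb{R}$ is spanned by certain columns of $\Gamma(A,\lambda)$, and by Proposition \ref{modifeigspace} exactly those in valid equivalence classes. The disjointness hypothesis on critical circuits of $G(B_{A,\lambda,\mathcal{C}})$ means (via Proposition \ref{eigvec1} applied to $B_{A,\lambda,\mathcal{C}}$ and Propositions \ref{balc}, \ref{balc-adj}) that each algebraic eigenspace $W(A,\lambda)$ for $\lambda \neq \varepsilon$ is spanned by columns $\bm{g}_k$ of $(B_{A,\lambda,\mathcal{C}})^*$ indexed by \emph{distinct connected components} of the critical graph, one basis vector per component, and each such $\bm{g}_k$ satisfies $[\bm{g}_k]_k = e$ while $[\bm{g}_k]_j < e$ for $j$ not in the same critical component — this is the key structural fact the hypothesis buys us, giving a ``dominant coordinate'' for each basis vector. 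I would then track these dominant coordinates: if $\bm{x} \in W(A,\lambda_0)$ has dominant coordinate $i_0$ (so $[\bm{x}]_{i_0}$ is maximal after scaling, achieved at the component of $i_0$ in $G^c(B_{A,\lambda_0,\mathcal{C}_0})$), I examine which summand $\bm{y}_\lambda$ or basis vector in $\mathcal{B}_{\lambda_0}$ realizes coordinate $i_0$ of $\bm{x} = \bigoplus \bm{y}_\lambda \oplus \bm{z}$. Using the eigenvalue equation $(A \oplus \lambda_0 \otimes E_n)\otimes \bm{x} = A \otimes \bm{x}$ at coordinate $i_0$ together with the analogous equations for each $\bm{y}_\lambda$, and the argument from the previous proposition comparing the contributions of $\lambda_0$ versus $\lambda$ at a vertex outside the relevant multi-circuit, I would derive that any $\bm{y}_\lambda$ with $\lambda \neq \lambda_0$ contributing to coordinate $i_0$ forces $\lambda = \lambda_0$, a contradiction; hence coordinate $i_0$ of $\bm{x}$ comes entirely from $\mathcal{B}_{\lambda_0}$-vectors, and by the distinct-dominant-coordinate property it comes from the unique basis vector of $\mathcal{B}_{\lambda_0}$ whose component contains $i_0$. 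Iterating this over all dominant coordinates of $\bm{x}$ pins $\bm{x}$ to a single basis vector.

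The main obstacle I anticipate is the bookkeeping around the $\varepsilon$ coordinates and the case $\lambda_0 = \varepsilon$ (or some $\lambda = \varepsilon$), exactly as in the proof of the preceding proposition: when $[A \otimes \bm{x}]_k = \varepsilon$ the ``dominant coordinate'' argument degenerates, and I would need to restrict to the index set $L = \{k : x_k \neq \varepsilon\}$, check that this restriction is compatible with the block structure forced by $a_{ij} = \varepsilon$ on $K \times L$, and re-run the comparison of eigenvalues on the restricted submatrix. A secondary subtlety is verifying that the disjointness hypothesis genuinely yields the ``one basis vector per component with a strict dominant coordinate'' description uniformly across all the matrices $B_{A,\lambda,\mathcal{C}}$ (and that Proposition \ref{modifeigspace}'s valid classes are consistent with this), so that the dominant-coordinate of distinct basis vectors within a single $\mathcal{B}_{\lambda_0}$ are genuinely distinct vertices — without this, two basis vectors of the same eigenspace could compete for coordinate $i_0$ and the final pinning step fails. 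I expect the rest — closure of the decomposition under the eigenvalue equation, and the contradiction extracting $\lambda = \lambda_0$ — to follow the template already established for the pairwise-intersection proposition with only notational overhead.
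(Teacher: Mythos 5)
Your overall plan---fix a distinguished coordinate $i$ of the basis vector $\bm{x}\in\mathcal{B}_{\lambda_0}$, identify which summand $\bm{y}_\mu$ of a putative decomposition realizes that coordinate, and force $\mu=\lambda_0$---is the same skeleton as the paper's proof. But there are two genuine gaps. The decisive one is the case $\mu<\lambda_0$. The pairwise-intersection proposition gets $\lambda_1=\lambda_2$ from \emph{two} inequalities, one in each direction, because there the single vector $\bm{x}$ lies in both eigenspaces. In the present setting you only have $\bm{x}\ge\bm{y}_\mu$ with equality at the one coordinate $i$, so the comparison $\lambda_0\otimes x_i=[A\otimes\bm{x}]_i\ge[A\otimes\bm{y}_\mu]_i\ge\mu\otimes y_i=\mu\otimes x_i$ only yields $\lambda_0\ge\mu$; it kills $\mu>\lambda_0$ but says nothing when $\mu<\lambda_0$. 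That case is where all the real work lies: the paper chooses $i\notin V(\mathcal{C})$ lying on a critical circuit $D$ with $\ell(\varphi_{\mathcal{C}}(D))>\ell(\mathcal{C})$, arranges that the neighbours $j,k$ of $i$ in $\mathcal{C}'=\varphi_{\mathcal{C}}(D)$ satisfy $j\notin V(\mathcal{C})$ or $(j,k)\in E(\mathcal{C})$, and then proves (Lemma \ref{indepcond}) that the maximum in the $i$th and $j$th rows of $\begin{pmatrix}A&\lambda_0\otimes E_n\end{pmatrix}\otimes\begin{pmatrix}\bm{x}\\ \bm{x}\end{pmatrix}$ is attained \emph{exactly twice}. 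Only with that exact count can one argue that replacing $\bm{x}$ by the smaller $\bm{y}_\mu$ leaves a row whose maximum is attained once, contradicting $\bm{y}_\mu\in W(A,\mu)$. This ``exactly twice'' lemma is precisely where the disjointness hypothesis enters (an extra term attaining the maximum would, via $\psi_{\mathcal{C}}$ and Lemma \ref{adjproduct}, produce a critical circuit meeting $D$). Your proposal locates the hypothesis elsewhere and asserts the hard case follows ``with only notational overhead,'' which it does not.

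The second gap is the structural claim that each basis column $\bm{g}_k$ of $(B_{A,\lambda,\mathcal{C}})^{*}$ satisfies $[\bm{g}_k]_k=e$ and $[\bm{g}_k]_j<e$ for $j$ outside the critical component of $k$. The entry $[\bm{g}_k]_j$ is the maximal weight of a path from $j$ to $k$ in $G(B_{A,\lambda,\mathcal{C}})$, and although all circuits have nonpositive weight, individual paths can have positive weight, so this ``strictly dominant coordinate'' need not exist. What is true, and what the paper uses to finish (to get $\bm{x}=\bm{y}_{\lambda_0}$ once $\mu=\lambda_0$ is established), is the weaker minimality statement of Lemma \ref{minimalelem}: $(-x_i)\otimes\bm{x}$ is the minimal element of $\{(-v_i)\otimes\bm{v}\mid \bm{v}\in W(A,\lambda_0),\,v_i\neq\varepsilon\}$. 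Your ``iterate over dominant coordinates'' step should be replaced by this minimality argument; as written it rests on a false premise.
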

Such an assumption described in terms of disjoint circuits
 also appears when we consider a max-plus analogue of Jordan canonical forms of matrices~\cite{Nishida2019}.
Before the proof of the theorem, we present technical lemmas.
\begin{lemma} \label{minimalelem}
	Let $P$ be a matrix with the maximum eigenvalue $0$ and $\bm{p}_{i}$ be the $i$th column of $P^{*}$.
	If $i$ is on the critical circuit in $G(P)$, then $\bm{p}_{i}$ is the minimal element in
	\begin{align*}
		U^{(i)} = \{ (-v_{i}) \otimes \bm{v} \ |\ \bm{v} \in U(P,0), v_{i} \neq \varepsilon  \},
	\end{align*}
	where $U(P,0)$ is the eigenspace of $P$ with respect to $0$.
\end{lemma}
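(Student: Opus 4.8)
The plan is to use the explicit description of the eigenspace $U(P,0)$ provided by Proposition \ref{eigvec1}, together with the fact that $P^{*}$ acts as a projection onto $U(P,0)$. First I would recall two standard facts from max-plus spectral theory: (a) since $\lambda(P)=0$, we have $P^{*}=E_{n}\oplus P\oplus\cdots\oplus P^{\otimes n-1}$, and every column $\bm{p}_{k}$ of $P^{*}$ satisfies $P\otimes\bm{p}_{k}\preceq\bm{p}_{k}$ (entrywise $\leq$), with equality — so $\bm{p}_{k}\in U(P,0)$ — precisely when $k$ lies on a critical circuit, which is our hypothesis on $i$; (b) for any $\bm{v}\in U(P,0)$ one has $P^{*}\otimes\bm{v}=\bm{v}$, because $P\otimes\bm{v}=\bm{v}$ implies $P^{\otimes k}\otimes\bm{v}=\bm{v}$ for all $k\geq 0$ and hence $P^{*}\otimes\bm{v}=\bm{v}$.

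Next I would exploit these to pin down $\bm{p}_{i}$ inside $U^{(i)}$. Take any $\bm{v}\in U(P,0)$ with $v_{i}\neq\varepsilon$; after scaling we may assume $v_{i}=e$, so that $(-v_{i})\otimes\bm{v}=\bm{v}$ is a typical element of $U^{(i)}$. Applying fact (b), $\bm{v}=P^{*}\otimes\bm{v}=\bigoplus_{k}\bm{p}_{k}\otimes v_{k}$. The $i$th entry of the column $\bm{p}_{i}$ of $P^{*}$ equals $[P^{*}]_{ii}=e$ (the diagonal of $P^{*}$ is $e$ since $\lambda(P)=0$ rules out positive-weight circuits and $E_{n}$ is a summand), so the term $\bm{p}_{i}\otimes v_{i}=\bm{p}_{i}\otimes e=\bm{p}_{i}$ already appears in this sum. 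Therefore $\bm{v}\succeq\bm{p}_{i}$ entrywise. Since $\bm{v}$ was an arbitrary normalized element of $U^{(i)}$, this shows $\bm{p}_{i}$ is a lower bound for $U^{(i)}$; and because $\bm{p}_{i}$ itself lies in $U(P,0)$ (by the hypothesis that $i$ is on a critical circuit) and has $i$th entry $e$, it belongs to $U^{(i)}$. Hence $\bm{p}_{i}$ is the minimum element of $U^{(i)}$.

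The only point requiring a little care — and the place I expect to spend the most attention — is the normalization step together with the claim $[P^{*}]_{ii}=e$: one must check that for $\bm{v}\in U(P,0)$ with $v_{i}\neq\varepsilon$ the scaled vector still lies in $U(P,0)$ (immediate, since $U(P,0)$ is a subspace closed under $\otimes$) and that no diagonal entry of $P^{*}$ exceeds $e$, which follows because a diagonal entry $[P^{\otimes k}]_{ii}>0$ would give a circuit of positive weight through $i$, contradicting $\lambda(P)=0$. Everything else is a direct consequence of the projection identity $P^{*}\otimes\bm{v}=\bm{v}$ on $U(P,0)$ and the appearance of $\bm{p}_{i}$ as the $i$th summand. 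Once Lemma \ref{minimalelem} is established in this form it feeds directly into the proof of Theorem \ref{main1}, since it identifies the canonical (minimal) representative of each eigenspace generator, which is exactly what is needed to compare generators coming from different algebraic eigenvalues.
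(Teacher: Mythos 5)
Your proof is correct, and it in fact establishes a stronger conclusion than the paper does: you show that $\bm{p}_{i}$ is the \emph{least} element of $U^{(i)}$ (a lower bound for every element), whereas the paper only argues minimality. Your route is also genuinely different. You rely on the projection identity $P^{*}\otimes\bm{v}=\bm{v}$ for $\bm{v}\in U(P,0)$ together with $[P^{*}]_{ii}=e$, so that after normalizing $v_{i}=e$ the term $\bm{p}_{i}\otimes v_{i}=\bm{p}_{i}$ appears in the expansion $\bm{v}=\bigoplus_{k}\bm{p}_{k}\otimes v_{k}$ and immediately gives $\bm{v}\geq\bm{p}_{i}$. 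The paper instead argues by contradiction at the level of columns: if $(-[\bm{p}_{j}]_{i})\otimes\bm{p}_{j}\leq\bm{p}_{i}$, then reading off the $j$th entry gives $[\bm{p}_{i}]_{j}+[\bm{p}_{j}]_{i}\geq 0$, which exhibits a closed walk through $i$ and $j$ of nonnegative weight; since the maximum circuit mean is $0$, the vertices $i$ and $j$ lie on a common critical circuit and the two columns are proportional. Each approach buys something: the paper's combinatorial argument identifies exactly when two columns can be comparable (namely, when they are scalar multiples, corresponding to the same strongly connected component of the critical graph), while yours is shorter, avoids the critical-graph analysis, and handles arbitrary elements of $U^{(i)}$ directly rather than only normalized columns (the reduction from general max-combinations to single columns is left implicit in the paper). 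Both versions suffice for the use made of the lemma in the proof of Theorem \ref{main1}.
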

\begin{proof}
	Suppose that there is the $j$th column $\bm{p}_{j}$ of $P^{*}$ such that $(-[\bm{p}_{j}]_{i}) \otimes \bm{p}_{j} \leq \bm{p}_{i}$.
	The $j$th entry of this inequality is $-[\bm{p}_{j}]_{i} \leq [\bm{p}_{i}]_{j}$, yielding $[\bm{p}_{i}]_{j} + [\bm{p}_{j}]_{i} \geq 0$.
	Since the maximum average weight of circuits in $G(P)$ is $0$, the vertices $i$ and $j$ are on the same critical circuit.
	This implies $\bm{p}_{i} = (-[\bm{p}_{j}]_{i}) \otimes \bm{p}_{j}$.
\end{proof}
\begin{lemma} \label{adjproduct}
	The product of the $i$th row of $P = (p_{ij}) \in \mathbb{R}_{\max}^{n \times n}$ and the $i$th column of $\mathrm{adj}(P)$
	is equal to $\det P$ for all $i$.
	In particular, if the maximum in this product is attained with $p_{ij} \otimes [\mathrm{adj}(P)]_{ji}$,
	then there exists a permutation $\sigma \in S_{n}$ attaining $\det P$ such that $\sigma(i) = j$.
\end{lemma}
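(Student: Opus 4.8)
The plan is to prove the max-plus Laplace expansion of $\det P$ along its $i$th row and then extract the refined statement about which permutation realizes the maximum.

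First I would write the product in question explicitly. Since $[\mathrm{adj}(P)]_{ji} = \det P^{(i,j)}$, the product of the $i$th row of $P$ and the $i$th column of $\mathrm{adj}(P)$ equals $\bigoplus_{j=1}^{n} p_{ij} \otimes \det P^{(i,j)}$. On the other hand, $\det P = \bigoplus_{\pi \in S_{n}} \bigotimes_{k=1}^{n} p_{k\pi(k)}$. Partitioning $S_{n}$ according to the value $j := \pi(i)$, a permutation $\pi$ with $\pi(i) = j$ restricts to a bijection from $\{1,\dots,n\} \setminus \{i\}$ onto $\{1,\dots,n\} \setminus \{j\}$, and conversely every such bijection extends uniquely to a $\pi \in S_{n}$ with $\pi(i) = j$. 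Under the order-preserving relabelings of $\{1,\dots,n\}\setminus\{i\}$ and $\{1,\dots,n\}\setminus\{j\}$ onto $\{1,\dots,n-1\}$, these bijections are precisely the permutations appearing in the expansion of $\det P^{(i,j)}$, and the associated products agree term by term, namely $\bigotimes_{k \neq i} p_{k\pi(k)}$ equals the corresponding term of $\det P^{(i,j)}$. Hence $\bigoplus_{\pi \in S_{n},\ \pi(i)=j} \bigotimes_{k=1}^{n} p_{k\pi(k)} = p_{ij} \otimes \det P^{(i,j)}$, and summing over all $j$ gives $\det P = \bigoplus_{j=1}^{n} p_{ij} \otimes \det P^{(i,j)}$, which is the first assertion.

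For the refined statement, suppose the maximum in $\bigoplus_{j} p_{ij} \otimes \det P^{(i,j)}$ is attained at the index $j$, so that $\det P = p_{ij} \otimes \det P^{(i,j)}$. Choose a bijection $\tau$ from $\{1,\dots,n\}\setminus\{i\}$ onto $\{1,\dots,n\}\setminus\{j\}$ realizing the maximum in $\det P^{(i,j)}$, and define $\sigma \in S_{n}$ by $\sigma(i) = j$ and $\sigma(k) = \tau(k)$ for $k \neq i$. Then $\bigotimes_{k=1}^{n} p_{k\sigma(k)} = p_{ij} \otimes \bigotimes_{k \neq i} p_{k\tau(k)} = p_{ij} \otimes \det P^{(i,j)} = \det P$, so $\sigma$ attains $\det P$ and $\sigma(i) = j$, as required. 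The degenerate cases (such as $\det P^{(i,j)} = \varepsilon$, or $n = 1$ where $P^{(i,j)}$ is the empty matrix with determinant $e$) cause no trouble: one simply takes $\tau$ to be any bijection of the index sets.

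The only point requiring care is the identification of the restricted permutations with the permutations underlying the minor $P^{(i,j)}$ — that is, verifying that deleting row $i$ and column $j$ and relabeling genuinely puts the terms $\bigotimes_{k \neq i} p_{k\pi(k)}$ in a bijective, value-preserving correspondence with the terms of $\det P^{(i,j)}$. This is the standard combinatorial fact behind cofactor expansion, and it transfers verbatim to the max-plus setting because the definition of $\det$ carries no signs; I expect no genuine obstacle beyond this bookkeeping.
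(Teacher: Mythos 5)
Your proposal is correct and follows essentially the same route as the paper: both identify $[\mathrm{adj}(P)]_{ji}$ with the maximum of $\bigotimes_{k\neq i}p_{k\tau(k)}$ over bijections $\tau:\{1,\dots,n\}\setminus\{i\}\to\{1,\dots,n\}\setminus\{j\}$, partition $S_n$ according to the value of $\sigma(i)$ to obtain the max-plus cofactor expansion, and read off the second assertion from the equality case. Your write-up is merely more explicit about the relabeling bookkeeping, which the paper takes for granted.
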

\begin{proof}
Let $S(i,j)$ denote the set of all bijections $\tau: \{1,2,\dots,n\} \setminus \{ i \} \to \{1,2,\dots,n\} \setminus \{ j \}$.
Then, we have
\begin{align*}
	[\mathrm{adj}(P)]_{ji} = \bigoplus_{\tau \in S(i,j)} \bigotimes_{k \neq i}p_{k \tau(k)}
\end{align*}
and hence 
\begin{align*}
	p_{ij} \otimes [\mathrm{adj}(P)]_{ji} 
	= \bigoplus_{\sigma \in S_{n}, \sigma(i) = j} \bigotimes_{k =1}^{n} p_{k \sigma(k)} \leq \det P.
\end{align*}
Considering the case where the equality holds, we get the latter statement of the lemma.
Taking the maximum for $j = 1,2,\dots,n$, we have
\begin{align*}
	\bigoplus_{j=1}^{n} p_{ji} \otimes [\mathrm{adj}(P)]_{ij} 
	= \bigoplus_{\sigma \in S_{n}} \bigotimes_{k =1}^{n} p_{k \sigma(k)} = \det P.
\end{align*} 
\end{proof}

\begin{proof}[Proof of Theorem \ref{main1}]
	It is sufficient to show that any vector in
	$\bigcup_{\lambda \in \Lambda} \mathcal{B}_{\lambda}$  cannot be expressed by others.
	Take $\lambda \in \Lambda$, $\bm{x} = {}^t\!(x_{1},x_{2},\dots,x_{n}) \in \mathcal{B}_{\lambda}$ and 
	$\lambda$-maximal multi-circuit $\mathcal{C}$ with the minimum length.
	By Proposition \ref{balc} and Proposition \ref{eigvec1}, there is an index $i$ such that 
	$\bm{x}$ is a scalar multiple of the $i$th column of $(B_{A,\lambda,\mathcal{C}})^{*}$.
	(If $A$ does not satisfy ($\bigstar$), we refer to Proposition \ref{modifeigspace}.)
	In particular, we may assume that $i \not\in V(\mathcal{C})$ and 
	there is a circuit $D$ with weight $0$ in $G(B_{A,\lambda,\mathcal{C}})$
	such that $i \in V(D)$ and the length of $\mathcal{C}':= \varphi_{\mathcal{C}}(D)$ is larger than $\ell(\mathcal{C})$, 
	see Proposition \ref{mapvarphi}.
	Further, letting $j$ and $k$ be the preceding and the succeeding vertices of $i$ in $\mathcal{C}'$ respectively,
	we may assume that we have $j \not\in V(\mathcal{C})$ or $(j,k) \in E(\mathcal{C})$.
	Indeed, if there exist two consecutive vertices in $V(\mathcal{C}') \setminus V(\mathcal{C})$, 
	we can take them as $j$ and $i$.
	Otherwise, consider tuples the consecutive vertices $(j,i,k)$ in $\mathcal{C}'$ where $j,k \in V(\mathcal{C})$ and
	$i \not\in V(\mathcal{C})$.
	Since $\mathcal{C}'$ is obtained by replacing the subpath $\mathcal{P}_{jk}$ in $\mathcal{C}$ from $j$ to $k$ 
	with the path $j \to i \to k$ for all such tuples $(j,i,k)$, 
	one of the subpaths $\mathcal{P}_{jk}$ must consist of one edge $(j,k)$ as $\ell(\mathcal{C}') > \ell(\mathcal{C})$.
	\par
	Let us consider the expression of $\bm{x}$ of the form
	\begin{align*}
		\bm{x} = \bigoplus_{\lambda' \in \Lambda} \bm{y}_{\lambda'}, \quad \bm{y}_{\lambda'} \in W(A,\lambda').
	\end{align*}
	There exists $\mu \in \Lambda$ such that $x_{i} = [\bm{y}_{\mu}]_{i}$.
	We will prove that $\mu = \lambda$.
	\par
	First, suppose that $\bm{y} \in W(A,\mu), \bm{x} \geq \bm{y}$ and $x_{i} = y_{i}$
	for some algebraic eigenvalue $\mu > \lambda$.
	Since $i \not\in V(\mathcal{C})$, we have
	\begin{align*}
		\lambda \otimes x_{i} = [A \otimes \bm{x}]_{i} \geq [A \otimes \bm{y}]_{i}
		\geq \mu \otimes y_{i} = \mu \otimes x_{i} > \lambda \otimes x_{i},
	\end{align*}
	 which is a contradiction.
	\par
	Next, suppose that $\bm{y} \in W(A,\mu), \bm{x} \geq \bm{y}$ and $x_{i} = y_{i}$
	for some algebraic eigenvalue $\mu < \lambda$.
	As noted above, either $j \not\in V(\mathcal{C})$ or $(j,k) \in E(\mathcal{C})$ occurs.
	By Lemma \ref{indepcond} below, we have the following facts corresponding to these cases.
	\begin{enumerate}
		\item 
		If $j \not\in V(\mathcal{C})$, then the maximum is attained exactly twice in the $j$th row of 
		$\begin{pmatrix} A & \lambda \otimes E_{n} \end{pmatrix} \otimes \begin{pmatrix} \bm{x} \\ \bm{x} \end{pmatrix}$.
		\item 
		If $(j,k) \in E(\mathcal{C})$, then the maximum is attained exactly twice in both the $i$th and the $j$th row of 
		$\begin{pmatrix} A & \lambda \otimes E_{n} \end{pmatrix} \otimes \begin{pmatrix} \bm{x} \\ \bm{x} \end{pmatrix}$.
	\end{enumerate}
	If $j \not\in V(\mathcal{C})$, then we have
	\begin{align*}
		[A \otimes \bm{x}]_{j} = [(A_{\mathcal{C}} \oplus \lambda \otimes E_{\setminus\mathcal{C}} ) \otimes \bm{x}]_{j} 
		= \lambda \otimes x_{j} = a_{ji} \otimes x_{i}
	\end{align*}
	since $(j,i) \in E(\mathcal{C}')$.
	The maximum in the $j$th row 
	of $\begin{pmatrix} A & \lambda \otimes E_{n} \end{pmatrix} \otimes \begin{pmatrix} \bm{x} \\ \bm{x} \end{pmatrix}$
	is attained only with these two terms.
	On the other hand, from our assumption, we have
	\begin{align*}
		&[A \otimes \bm{y}]_{j} \geq a_{ji} \otimes y_{i} = a_{ji} \otimes x_{i}, \\
		&a_{ji'} \otimes y_{i'} \leq a_{ji'} \otimes x_{i'} < a_{ji} \otimes x_{i} \quad \text{ for all } i' \neq i, \text{ and }\\
 		&\mu \otimes y_{j} < \lambda \otimes x_{j} = a_{ji} \otimes x_{i}.
	\end{align*}
	This shows that the maximum is attained exactly once in the $j$th row of 
	$\begin{pmatrix} A & \mu \otimes E_{n} \end{pmatrix} \otimes \begin{pmatrix} \bm{y} \\ \bm{y} \end{pmatrix}$,
	which contradicts the fact that $\bm{y} \in W(A,\mu)$.
	\par
	If $(j,k) \in E(\mathcal{C})$, we have
	\begin{align*}
		[A \otimes \bm{x}]_{j} = a_{jk} \otimes x_{k} = a_{ji} \otimes x_{i} 
	\end{align*}
	since $(j,i) \in E(\mathcal{C}')$.
	The maximum in the $j$th row
	of $\begin{pmatrix} A & \lambda \otimes E_{n} \end{pmatrix} \otimes \begin{pmatrix} \bm{x} \\ \bm{x} \end{pmatrix}$
	is attained only with these two terms.
	Since 
	\begin{align*}
		&[A \otimes \bm{y}]_{j} \geq a_{ji} \otimes y_{i} = a_{ji} \otimes x_{i}, \\
		&a_{ji'} \otimes y_{i'} \leq a_{ji'} \otimes x_{i'} < a_{ji} \otimes x_{i} \quad \text{ for all } i' \neq i,k,  \text{ and }\\
 		&\mu \otimes y_{j} < \lambda \otimes x_{j} = a_{ji} \otimes x_{i},
	\end{align*}
	and $\bm{y} \in W(A,\mu)$, we must have
	\begin{align*}
		[A \otimes \bm{y}]_{j} = a_{ji} \otimes y_{i} = a_{jk} \otimes y_{k} = a_{ji} \otimes x_{i} = a_{jk} \otimes x_{k},
	\end{align*}
	which leads to $x_{k} = y_{k}$.
	Then, for the $i$th row we similarly have
	\begin{align*}
 		&[A \otimes \bm{x}]_{i} = \lambda \otimes x_{i} = a_{ik} \otimes x_{k}, \\
		&[A \otimes \bm{y}]_{i} \geq a_{ik} \otimes y_{k} = a_{ik} \otimes x_{k}, \\
		&a_{ik'} \otimes y_{k'} \leq a_{ik'} \otimes x_{k'} < a_{ik} \otimes x_{k} \quad \text{ for all } k' \neq k, \text{ and }\\
		&\mu \otimes y_{i} < \lambda \otimes x_{i}. 
	\end{align*}
	This shows that the maximum is attained exactly once in the $i$th row of 
	$\begin{pmatrix} A & \mu \otimes E_{n} \end{pmatrix} \otimes \begin{pmatrix} \bm{y} \\ \bm{y} \end{pmatrix}$,
	which contradicts the fact that $\bm{y} \in W(A,\mu)$.
	\par
	Thus, we have proved that 	if $\bm{y} \in W(A,\mu), \bm{x} \geq \bm{y}$ and $x_{i} = y_{i}$
	for some algebraic eigenvalue $\mu \in \Lambda$, then $\mu = \lambda$.
	Further, we have $\bm{x} = \bm{y}$ in this case.
	Indeed, $(-x_{i}) \otimes \bm{x}$ is the minimal element in
	\begin{align*}
		W(A,\lambda)^{(i)} = \{ (-v_{i}) \otimes \bm{v} \ |\ \bm{v} \in W(A,\lambda), v_{i} \neq \varepsilon \}
	\end{align*}
	by Lemma \ref{minimalelem}.
	This means that $\bm{x}$ cannot be expressed as the max-plus linear combination of
	other algebraic eigenvectors in $\bigcup_{\lambda \in \Lambda} \mathcal{B}_{\lambda}$.
\end{proof}
\begin{lemma} \label{indepcond}
	Under the setting in Theorem \ref{main1} and its proof, the following facts hold.
	\begin{enumerate}
		\item 
		If $j \not\in V(\mathcal{C})$, then the maximum is attained exactly twice in the $j$th row of 
		$\begin{pmatrix} A & \lambda \otimes E_{n} \end{pmatrix} \otimes \begin{pmatrix} \bm{x} \\ \bm{x} \end{pmatrix}$.
		\item 
		If $(j,k) \in E(\mathcal{C})$, then the maximum is attained exactly twice in both the $i$th and the $j$th row of 
		$\begin{pmatrix} A & \lambda \otimes E_{n} \end{pmatrix} \otimes \begin{pmatrix} \bm{x} \\ \bm{x} \end{pmatrix}$.
	\end{enumerate}
\end{lemma}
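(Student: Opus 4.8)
The plan is to translate both assertions into one combinatorial statement about the critical graph of $B:=B_{A,\lambda,\mathcal{C}}$. Put $M:=A_{\mathcal{C}}\oplus\lambda\otimes E_{\setminus\mathcal{C}}$ and $N:=A_{\setminus\mathcal{C}}\oplus\lambda\otimes E_{\mathcal{C}}$, so that $A\oplus\lambda\otimes E_{n}=M\oplus N$, $N=M\otimes B$, and \eqref{algeigveceq} reads exactly $N\otimes\bm{x}=M\otimes\bm{x}$; scale $\bm{x}$ to be the $i$th column of $(B_{A,\lambda,\mathcal{C}})^{*}$ (so $x_{i}=0$), which is harmless since the conclusion is scale-invariant. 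As $M$ is a generalized permutation matrix with permutation $\sigma_{\mathcal{C}}$, exactly one of the $n+1$ terms in the $r$th row of $\begin{pmatrix}A & \lambda\otimes E_{n}\end{pmatrix}\otimes\begin{pmatrix}\bm{x}\\ \bm{x}\end{pmatrix}$ comes from $M$; by the displayed equation it equals the row maximum, and every other maximizer comes from $N$. Writing $m_{r}$ for the unique finite entry of the $r$th row of $M$ we have $N_{rs}=m_{r}\otimes B_{\sigma_{\mathcal{C}}(r),s}$, so (using $B\otimes\bm{x}=\bm{x}$ from Proposition~\ref{balc}) the number of maximizers coming from $N$ equals the number of edges $(\sigma_{\mathcal{C}}(r),s)$ of $G(B)$ with $B_{\sigma_{\mathcal{C}}(r),s}\otimes x_{s}=x_{\sigma_{\mathcal{C}}(r)}$ --- call these the \emph{tight out-edges} of $\sigma_{\mathcal{C}}(r)$. (One caveat: when $r\in V(\mathcal{C})$ the diagonal entry of $N$ in row $r$ may represent two of the $n+1$ terms; I deal with this at the end.) Since the balancing equation already forces at least two maximizers in every row, it suffices to prove that $\sigma_{\mathcal{C}}(r)$ has exactly one tight out-edge for $r=j$ in case (1), where $\sigma_{\mathcal{C}}(j)=j$, and for $r=i$ and $r=j$ in case (2), where $\sigma_{\mathcal{C}}(i)=i$ and $\sigma_{\mathcal{C}}(j)=k$.

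Next I would locate the relevant vertices on the critical circuit $D$ of $G(B)$ through $i$ by reading off the construction of $\mathcal{C}'=\varphi_{\mathcal{C}}(D)$ in Definition~\ref{mapvarphi}. In case (1), $(j,i)\in E(\mathcal{C}')$ with $j\notin V(\mathcal{C})$, so step 3(a) forces $(j,i)\in E(D)$. In case (2), step 3(a) at $i\notin V(\mathcal{C})$ forces $(i,k)\in E(D)$, and since the successor of $j$ in $\mathcal{C}'$ is $i\ne\sigma_{\mathcal{C}}(j)=k$, step 3(c) at $j$ forces $(k,i)\in E(D)$. So in each case the vertex in question lies on $D$ with a prescribed out-edge along $D$. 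The key statement is then: \emph{for every $u\in V(D)$, the out-edge of $u$ along $D$ is its unique tight out-edge}. Since $G(B)$ has no positive circuit and $w(D)=0$, for $u\in V(D)$ one gets that $x_{u}=[B^{*}]_{ui}$ is the weight of the path along $D$ from $u$ to $i$ (the bound $\le$ comes from splicing any heavier walk $u\to i$ with the $D$-path $i\to u$ into a positively weighted closed walk); hence $B_{uw}=x_{u}-x_{w}$ on every edge $(u,w)$ of $D$, so the edges of $D$ are tight. For uniqueness, a second tight out-edge $(u,s)$ of $u$, followed by a maximum-weight walk $s\to i$ and then the $D$-path $i\to u$, forms a closed walk through $u$ of weight $0$; decomposing it into (weight-$0$) elementary circuits, the one carrying $(u,s)$ is a critical circuit through $u$, which by the disjointness hypothesis of Theorem~\ref{main1} must be $D$ itself --- forcing $(u,s)$ to be the $D$-out-edge of $u$, a contradiction.

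Applying the key statement with $u=j$ in case (1) and with $u=i$ and $u=k$ in case (2) gives exactly one tight out-edge at each vertex we need, and with the reduction above this yields ``exactly twice'' once the caveat is handled. The caveat only concerns case (2), row $j$, where $j\in V(\mathcal{C})$ and the diagonal entry of $N$ represents the two terms $a_{jj}\otimes x_{j}$ and $\lambda\otimes x_{j}$: if that entry attained the row maximum then, since it equals $m_{j}\otimes B_{k,j}$, the edge $(k,j)$ would be a tight out-edge of $k$ different from $(k,i)$, contradicting the uniqueness already proved; hence it is strictly submaximal, so both terms it represents are, and the count is as claimed. In rows $i$ (case (2)) and $j$ (case (1)) the row index lies outside $V(\mathcal{C})$ and no doubling occurs.

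I expect the key statement --- and within it the uniqueness half --- to be the main obstacle: one must set up the splicing so that a hypothetical second tight out-edge of $u$ genuinely produces a \emph{second} critical elementary circuit through $u$ for the disjointness hypothesis to kill, and for this the weight identities on $D$ (that $x_{u}$ equals the $D$-distance to $i$, whence the edges of $D$ are tight) must be established first. The first-paragraph reduction is routine, but the diagonal bookkeeping cannot be omitted: the near-symmetric matrices of Section~4, where a loop of weight $\lambda$ may occur, are exactly the case in which it would otherwise break.
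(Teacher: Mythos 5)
Your proof is correct, but it reaches the contradiction by a genuinely different route from the paper's. The paper identifies $\bm{x}$ with a column of the adjugate $\Gamma(A,\lambda)=\mathrm{adj}(A\oplus\lambda\otimes E_{n})$ via Proposition~\ref{balc-adj}, converts a hypothetical extra maximizer in a row into a permutation attaining $\det(A\oplus\lambda\otimes E_{n})$ via Lemma~\ref{adjproduct}, hence into a $\lambda$-maximal multi-circuit $\mathcal{C}''$ of $G(A)$ containing a prescribed edge, and then pushes $\mathcal{C}''$ back into $G(B_{A,\lambda,\mathcal{C}})$ through $\psi_{\mathcal{C}}$ to produce a critical circuit meeting $D$, contradicting the disjointness hypothesis. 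You instead never leave $G(B_{A,\lambda,\mathcal{C}})$: the factorization $N=M\otimes B$ turns row maximizers directly into tight out-edges of $\sigma_{\mathcal{C}}(r)$, and a second tight out-edge at a vertex of $D$ is spliced with a maximum-weight walk back to $i$ and the $D$-path into a zero-weight closed walk, whose circuit decomposition yields a second critical circuit through that vertex. Both proofs locate $i$, $j$, $k$ on $D$ the same way (by reading off Definition~\ref{mapvarphi}) and invoke the disjointness hypothesis identically at the end; the difference is only in how the offending critical circuit is manufactured. Your version buys self-containedness --- it avoids Lemma~\ref{adjproduct} and the somewhat opaque claim that $\psi_{\mathcal{C}}(\mathcal{C}'')$ contains a critical circuit through the given vertex --- at the price of having to establish the identities $x_{u}=[B^{*}}%
]_{ui}=(\text{$D$-distance from $u$ to $i$})$ and the tightness of the edges of $D$, which the paper gets for free from the adjugate identity. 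Your explicit treatment of the diagonal entry $N_{jj}=a_{jj}\oplus\lambda$ in Case~2 is a point the paper glosses over (it simply lists $\lambda\otimes x_{j}$ among the terms excluded via $\mathcal{C}''$ not containing $j$), and your resolution of it through uniqueness of the tight out-edge at $k$ is sound.
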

\begin{proof}
	We first consider Case 1.
	By Proposition \ref{balc-adj}, the $j$th column of $(B_{A,\lambda,\mathcal{C}})^{*}$ is a scalar multiple of 
	the $j$th column of $\Gamma(A,\lambda) = \mathrm{adj} (A \oplus \lambda \otimes E_{n})$ since $j \not\in V(\mathcal{C})$.
	Further, since the veritces $i$ and $j$ are on the same critical circuit $D$ in $G(B_{A,\lambda,\mathcal{C}})$,
	$\bm{x}$ is a scalar multiple of the $j$th column of $\Gamma(A,\lambda)$.
	Suppose that the maximum is attained with more than three terms in the $j$th row of 
	$\begin{pmatrix} A & \lambda \otimes E_{n} \end{pmatrix} \otimes \begin{pmatrix} \bm{x} \\ \bm{x} \end{pmatrix}$.
	Then, the maximum is attained with $a_{jp} \otimes x_{p}$ in addition to $\lambda \otimes x_{j}$ and $a_{ji} \otimes x_{i}$.
	This means that the maximum in the product of the $j$th row of $A \oplus \lambda \otimes E_{n}$
	and the $j$th column of $\Gamma(A,\lambda)$ is attained with the $p$th term.
	By Lemma \ref{adjproduct}, there is a permutation $\sigma \in S_{n}$ attaining $\det(A \oplus \lambda \otimes E_{n})$
	such that $\sigma(j) = p$.
	This permutation $\sigma$ corresponds to a $\lambda$-maximal multi-circuit $\mathcal{C}''$ in $G(A)$ containing the edge $(j,p)$.
	Then, $\psi_{\mathcal{C}}(\mathcal{C}'')$ contains a critical circuit that intersects $D$ at the vertex $j$
	in the graph $G(B_{A,\lambda,\mathcal{C}})$, which contradicts the assumption for $A$ in Theorem \ref{main1}.
	\par
	A similar argument can be applied to Case 2.
	In this case, $D = (i,k,i)$ is a critical circuit in $G(B_{A,\lambda,\mathcal{C}})$ by the construction of $\psi_{\mathcal{C}}$.
	Since $i \not\in V(\mathcal{C})$ and $k$ is the succeeding vertex of $j$ in $\mathcal{C}$,
	the $i$th and the $k$th columns of $(B_{A,\lambda,\mathcal{C}})^{*}$ are scalar multiples of 
	the $i$th and the $j$th columns of $\Gamma(A,\lambda)$, respectively.
	Hence, all of these columns are scalar multiples of $\bm{x}$.
	Suppose that in the $i$th row of 
	$\begin{pmatrix} A & \lambda \otimes E_{n} \end{pmatrix} \otimes \begin{pmatrix} \bm{x} \\ \bm{x} \end{pmatrix}$
	the maximum is attained with $a_{ip} \otimes x_{p}$ in addition to $\lambda \otimes x_{i}$ and $a_{ik} \otimes x_{k}$.
	Then, there is a $\lambda$-maximal multi-circuit $\mathcal{C}''$ in $G(A)$ containing the edge $(i,p)$.
	Hence, $\psi_{\mathcal{C}}(\mathcal{C}'')$ contains a critical circuit that intersects $D$ at the vertex $i$ 
	in $G(B_{A,\lambda,\mathcal{C}})$, which contradicts to the assumption for $A$ in Theorem \ref{main1}.
	Next suppose that in the $j$th row of 
	$\begin{pmatrix} A & \lambda \otimes E_{n} \end{pmatrix} \otimes \begin{pmatrix} \bm{x} \\ \bm{x} \end{pmatrix}$
	the maximum is attained with $a_{jp} \otimes x_{p}$ or $\lambda \otimes x_{j}$ 
	in addition to $a_{ji} \otimes x_{i}$ and $a_{jk} \otimes x_{k}$.
	In the former case, there is a $\lambda$-maximal multi-circuit $\mathcal{C}''$ in $G(A)$ containing the edge $(j,p)$.
	Then, $\psi_{\mathcal{C}}(\mathcal{C}'')$ contains a critical circuit with edge $(k,p)$, which intersects $D$ at the vertex $k$.
	In the latter case, there is a $\lambda$-maximal multi-circuit $\mathcal{C}''$ in $G(A)$ not containing the vertex $j$.
	Then, $\psi_{\mathcal{C}}(\mathcal{C}'')$ contains the edge $(k,j)$ and intersects $D$ at the vertex $k$.
	Hence, both cases contradict the assumption for $A$ in Theorem \ref{main1}.
\end{proof}
\begin{example} \label{indepexm}
	We again consider a matrix
	\begin{align*}
		A = \begin{pmatrix}
		10 & 10 & 9 & \varepsilon \\
		9 & 1 & \varepsilon & \varepsilon \\
		\varepsilon & \varepsilon & \varepsilon & 9 \\
		9 & \varepsilon & \varepsilon & \varepsilon 
		\end{pmatrix},
	\end{align*}
	whose algebraic eigenvalues are $10, 9, 8, 1$ and corresponding algebraic eigenvectors are
	\begin{align*}
		\bm{x}_{1} = {}^t\!(2,1,0,1), \quad \bm{x}_{2} = {}^t\!(0,0,0,0), \quad
		 \bm{x}_{3} = {}^t\!(0,1,2,1), \quad \bm{x}_{4} =  {}^t\!(0,15,16,8),
	\end{align*}
	respectively. 
	The $\lambda$-maximal multi-circuits are
	\begin{align*}
		&\mathcal{C}_{1} := \emptyset \text{ and } \mathcal{C}_{2} := \{(1,1)\} & &\text{ when } \lambda = 10, \\
		&\mathcal{C}_{2} \text{ and } \mathcal{C}_{3} := \{(1,2,1)\} & &\text{ when } \lambda = 9, \\
		&\mathcal{C}_{3} \text{ and } \mathcal{C}_{4} := \{(1,3,4,1)\} & &\text{ when } \lambda = 8, \text{ and }\\
		&\mathcal{C}_{4} \text{ and } \mathcal{C}_{5} := \{(1,3,4,1),(2,2)\} & &\text{ when } \lambda = 1.
	\end{align*}
	The critical circuits in $G(B_{A,10,\mathcal{C}_{1}}), G(B_{A,9,\mathcal{C}_{2}}), G(B_{A,8,\mathcal{C}_{3}})$
	and $G(B_{A,1,\mathcal{C}_{4}})$ are $(1,1), (1,2,1), (1,2,3,4,1)$ and $(2,2)$, respectively, see Figure \ref{indepexmfig}.
	Since the assumption for $A$ in Theorem \ref{main1} obviously holds, $\{\bm{x}_{1}, \bm{x}_{2}, \bm{x}_{3}, \bm{x}_{4}\}$
	is a basis of $W(A,10) \oplus W(A,9) \oplus W(A,8) \oplus W(A,1)$.
	\vspace*{5mm}
	\begin{figure}[htbp] \label{indepexmfig}
		\begin{center}
		\includegraphics[width=0.9\textwidth]{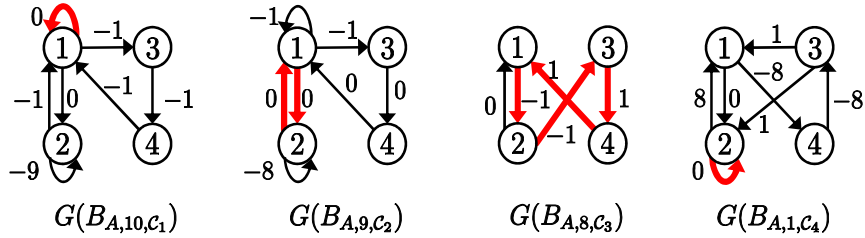}
		\caption{Graphs $G(B_{A,\lambda,\mathcal{C}})$ in Example \ref{indepexm}. 
		Bold red arrows indicate critical circuits.}
		\end{center}
	\end{figure}	
\end{example}
\section{Orthogonality of algebraic eigenvectors}
In this section, we focus on max-plus symmetric matrices.
A square matrix $A = (a_{ij}) \in \mathbb{R}_{\max}^{n \times n}$ is called symmetric if $a_{ij} = a_{ji}$ for all $i,j$.
In the conventional linear algebra, two eigenvectors of a symmetric matrix
with respect to distinct eigenvalues are orthogonal to each other.
We will investigate the max-plus analogue of this fact.
In the tropical geometric sense, two vectors $\bm{x} = {}^t\!(x_{1},x_{2},\dots,x_{n}) \in \mathbb{R}_{\max}^{n}$ and 
$\bm{y} = {}^t\!(y_{1},y_{2},\dots,y_{n}) \in \mathbb{R}_{\max}^{n}$ are called orthogonal to each other
if the maximum is attained at least twice in the inner product ${}^t\!\bm{x} \otimes \bm{y} = \bigoplus_{i=1}^{n} x_{i} \otimes y_{i}$.
\begin{theorem}
	Any two algebraic eigenvectors of a max-plus symmetric matrix with respect to distinct algebraic eigenvalues are
	orthogonal to each other.
\end{theorem}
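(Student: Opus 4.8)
I would normalize first: without loss of generality $\lambda_1>\lambda_2$, so $\lambda_1\in\mathbb{R}$ (if $\lambda_2=\varepsilon$, replace $A\oplus\lambda_2\otimes E_n$ by $A$ throughout), and take arbitrary nontrivial $\bm{x}\in W(A,\lambda_1)$, $\bm{y}\in W(A,\lambda_2)$. The first move is to replace membership in an algebraic eigenspace by a balancing statement. By Proposition~\ref{generalize}, $\bm{x}\in W(A,\lambda_1,\mathcal{C})$ for every $\lambda_1$-maximal multi-circuit $\mathcal{C}$, and since the two matrices in \eqref{algeigveceq} have disjoint supports and sum to $A\oplus\lambda_1\otimes E_n$, equation \eqref{algeigveceq} gives $(A\oplus\lambda_1\otimes E_n)\otimes\bm{x}=A\otimes\bm{x}$. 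Hence $[A\otimes\bm{x}]_i\ge\lambda_1\otimes x_i$ for all $i$, and whenever this is a strict inequality the maximum in the $i$th row of $A\otimes\bm{x}$ is attained at least twice; the analogous facts hold for $\bm{y},\lambda_2$. I also record that the property to be proved, namely that the maximum of $\bigoplus_i x_i\otimes y_i$ is attained at least twice, is invariant under scaling $\bm{x}$ and $\bm{y}$ separately.

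Next I would introduce $T:={}^t\bm{x}\otimes A\otimes\bm{y}$ and $s:={}^t\bm{x}\otimes\bm{y}$. If $s=\varepsilon$ then all $n\ge2$ summands of ${}^t\bm{x}\otimes\bm{y}$ equal $\varepsilon$ and orthogonality is immediate, so assume $s\in\mathbb{R}$. Using $a_{ij}=a_{ji}$ one gets $T=\bigoplus_i y_i\otimes[A\otimes\bm{x}]_i=\bigoplus_i x_i\otimes[A\otimes\bm{y}]_i$, and the first expression together with $[A\otimes\bm{x}]_i\ge\lambda_1\otimes x_i$ yields $T\ge\lambda_1\otimes s$; in particular $T\in\mathbb{R}$. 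Now suppose, for contradiction, that the maximum of ${}^t\bm{x}\otimes\bm{y}$ is attained only at a single index $i_0$; using the two available scalings I arrange $x_{i_0}=y_{i_0}=e$ (so $s=e$ and $T\ge\lambda_1$), while $x_i\otimes y_i<e$ for $i\ne i_0$.

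The heart of the argument is combinatorial. Call $(i,k)$ a \emph{critical pair} if $y_i\otimes a_{ik}\otimes x_k=T$; since $T\in\mathbb{R}$ at least one exists. For a critical pair one checks $[A\otimes\bm{x}]_i=T\otimes y_i^{-1}$ and $[A\otimes\bm{y}]_k=T\otimes x_k^{-1}$, whence, using $x_i\otimes y_i\le e$, $\lambda_1>\lambda_2$ and $T\ge\lambda_1$, every right member $k$ of a critical pair satisfies $[A\otimes\bm{y}]_k>\lambda_2\otimes y_k$ and every left member $i\ne i_0$ satisfies $[A\otimes\bm{x}]_i>\lambda_1\otimes x_i$; moreover if $i_0$ occurs as a left member of a critical pair then it occurs there together with a partner $\ne i_0$ (apply the "attained at least twice" clause from the first paragraph in row $i_0$). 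Consequently, in the bipartite graph on two copies of $\{1,\dots,n\}$ whose edges are the critical pairs, every vertex has degree $\ge 2$ except possibly the left copy of $i_0$, which has degree $\ge1$; such a graph (all but one vertex of degree $\ge 2$) necessarily contains a cycle $i_1-k_1-i_2-k_2-\cdots-i_m-k_m-i_1$.

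The final step is to exploit this cycle. Reading off $a_{i_jk_j}=T\otimes y_{i_j}^{-1}\otimes x_{k_j}^{-1}$ and $a_{i_{j+1}k_j}=T\otimes y_{i_{j+1}}^{-1}\otimes x_{k_j}^{-1}$ and using the symmetry of $A$, the alternating cycle becomes a genuine closed walk $i_1\to k_1\to i_2\to\cdots\to k_m\to i_1$ of length $2m$ in $G(A)$; one then plays the estimate $[A\otimes\bm{x}]_{i_j}\ge\lambda_1\otimes x_{i_j}$ on the "down" edges against $[A\otimes\bm{y}]_{k_j}\ge\lambda_2\otimes y_{k_j}$ on the "up" edges, together with $\bigotimes_j(x_{i_j}\otimes y_{i_j})\le e$ and $\bigotimes_j(x_{k_j}\otimes y_{k_j})\le e$, to force $\lambda_1=\lambda_2$, a contradiction. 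I expect this last collision to be the main obstacle: because algebraic eigenvectors only balance (rather than solve $A\otimes\bm{x}=\lambda\otimes\bm{x}$), both estimates are mere inequalities, so one must interlock them around the cycle so that the slacks cancel exactly — this is where the symmetry of $A$ is indispensable (to produce a closed walk at all, with reversed "up" edges carrying the expected weights) and where $\lambda_1\ne\lambda_2$ is indispensable (so the two eigenvalue estimates cannot both be tight along the cycle). Getting the bookkeeping right — in particular controlling whether the cycle passes through $i_0$ and which rows balance strictly versus with equality — is the real work of the proof.
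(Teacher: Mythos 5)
Your setup is sound and, up to relabelling, reproduces the first half of the paper's proof: the identity ${}^t\bm{x}\otimes A\otimes\bm{y}\ge\lambda_1\otimes{}^t\bm{x}\otimes\bm{y}$, the set of tight pairs (your ``critical pairs'' are the paper's $\Omega$), and the observation that the balancing property of $\bm{x}$ and $\bm{y}$ forces every tight row to have at least two tight columns. The gap is the final step, which you yourself flag as ``the real work'': the cycle of critical pairs does \emph{not} force $\lambda_1=\lambda_2$. All the estimates you propose to ``play against each other'' point the same way. Around the cycle one has $[A\otimes\bm{x}]_{i_j}=T\otimes y_{i_j}^{-1}$ and $[A\otimes\bm{y}]_{k_j}=T\otimes x_{k_j}^{-1}$, so the two balance inequalities read $T\ge\lambda_1\otimes x_{i_j}\otimes y_{i_j}$ and $T\ge\lambda_2\otimes x_{k_j}\otimes y_{k_j}$; since $x_\ell\otimes y_\ell\le e$ and $T\ge\lambda_1>\lambda_2$, every one of these is automatically satisfied, and no upper bound on $T$ in terms of $\lambda_2$ is available to produce a collision. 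Worse, the existence of the alternating cycle is exactly the statement that the square submatrix $[A]_{\{i_j\},\{k_j\}}$, after diagonal scaling by the entries of $\bm{x}$ and $\bm{y}$, is tropically singular (its tropical determinant is attained by the two perfect matchings of the cycle) --- and a symmetric matrix has singular square submatrices in abundance, so this structure is not intrinsically contradictory. Your argument therefore terminates in a true but harmless statement.

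What is missing is precisely the ingredient the paper uses to make this combinatorial structure impossible: a generic perturbation. The paper chooses $\bm\zeta\in Z$ so that every minor of $A(\bm\zeta;\delta)$ is attained by exactly one permutation (no singular square submatrices), transfers the tightness structure from $(\bm{x},\bm{y},A)$ to $(\bm{x}(\delta),\bm{y}(\delta),A(\bm\zeta;\delta))$ via the limit defining $W(A,\lambda)$, and then argues by cardinality: the degree-$\ge 2$ condition on rows says $[\bm{y}(\delta)]_J$ lies in the tropical kernel of $[A(\bm\zeta;\delta)]_{IJ}$, which by Proposition~\ref{ker} forces $|I|<|J|$; the transposed argument then shows at least two $j\in J$ satisfy $[{}^t\bm{x}\otimes A]_j=\lambda_1\otimes x_j$, and these two indices attain the maximum of ${}^t\bm{x}\otimes\bm{y}$ directly --- no contradiction hypothesis on the inner product is needed. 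To repair your proof you would have to either import that perturbation/nonsingularity argument (at which point you have essentially the paper's proof) or find a genuinely new source of contradiction; the weight bookkeeping around the cycle, as described, cannot supply one.
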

\begin{proof}
	Let $A = (a_{ij}) \in \mathbb{R}_{\max}^{n \times n}$ be a symmetric matrix and 
	$\lambda$ and $\mu$ be algebraic eigenvalues of $A$ with $\lambda > \mu$.
	Take $\bm{x} \in W(A,\lambda)$ and $\bm{y} \in W(A,\nu)$.
	Note that if both $\bm{v}_{1}$ and $\bm{v}_{2}$ are orthogonal to $\bm{u}$ 
	then so is $\alpha_{1} \otimes \bm{v}_{1} \oplus \alpha_{2} \otimes \bm{v}_{2}$ for $\alpha_{1},\alpha_{2} \in \mathbb{R}$.
	Hence, we may assume that $\bm{x}$ and $\bm{y}$ are in bases of $W(A,\lambda)$ and $W(A,\mu)$, respectively.
	\par
	Since $A \otimes \bm{x} = (A \oplus \lambda \otimes E_{n}) \otimes \bm{x} \geq \lambda \otimes \bm{x}$,
	we first observe that
	\begin{align*}
		{}^t\!\bm{x} \otimes A \otimes \bm{y} = \bigoplus_{i,j} x_{i} \otimes a_{ij} \otimes y_{j} 
			\geq \lambda \otimes {}^t\!\bm{x} \otimes \bm{y}.
	\end{align*}
	We consider the case ${}^t\!\bm{x} \otimes \bm{y} \neq \varepsilon$; otherwise the statement of the theorem is obvious.
	We define the set 
	\begin{align*}
	\Omega = \{(i,j) \ |\ {}^t\!\bm{x} \otimes A \otimes \bm{y} = x_{i} \otimes a_{ij} \otimes y_{j} \neq \varepsilon  \}.
	\end{align*}
	We also define the subsets $I$ and $J$ of indices by
	\begin{align*}
		I = \{ i \ |\ (i,j) \in \Omega \text{ for some } j \}, \quad J = \{ j \ |\ (i,j) \in \Omega \text{ for some } i \}.
	\end{align*}
	For $i \in I$, we have $[A \otimes \bm{y}]_{i} > \mu \otimes y_{i}$.
	Indeed, if $[A \otimes \bm{y}]_{i} \leq \mu \otimes y_{i}$, then for $j$ with $(i,j) \in \Omega$
	we have 
	\begin{align*}
		\mu \otimes x_{i} \otimes y_{j} 
		\geq x_{i} \otimes [A \otimes \bm{y}]_{i} 
		\geq x_{i} \otimes a_{ij} \otimes y_{j} 
		= {}^t\!\bm{x} \otimes A \otimes \bm{y}
		\geq \lambda \otimes {}^t\!\bm{x} \otimes \bm{y},
	\end{align*}
	contradicting the assumption $\lambda > \mu$.
	\par
	We will prove that $|I| < |J|$.
	On the contrary, suppose $|I| \geq |J|$.
	Take $\bm{\zeta} \in Z$ and consider the matrix $A(\bm{\zeta};\delta)$, see Section 2.5.
	In particular, we take $\bm{\zeta}$ so that each minor of $A(\bm{\zeta};\delta)$ is attained with 
	exactly one permutation or is $\varepsilon$.
	Since $\bm{y}$ is in a basis of $W(A,\mu)$, 
	it is a scalar multiple of some column of $\Gamma(A,\mu)$ if $\mu \neq \varepsilon$.
	Hence, there are $\mu(\bm{\zeta};\delta) \in \mathcal{B}(\mu,n\delta)$ and 
	$\bm{y}(\delta) \in W(A(\bm{\zeta};\delta), \mu(\bm{\zeta};\delta), \mathcal{C}(\bm{\zeta};\delta))$ for $\delta > 0$
	such that $\lim_{\delta \to +0} \bm{y}(\delta) = \bm{y}$.
	As $\mathcal{B}(\varepsilon,n\delta) = \{\varepsilon\}$,
	this also holds in the case with $\mu = \varepsilon$ by letting $\mu(\bm{\zeta};\delta) = \varepsilon$.
	For $i \in I$, the inequality $[A \otimes \bm{y}]_{i} > \mu \otimes y_{i}$ above implies
	\begin{align*}
		[(A(\bm{\zeta};\delta) \oplus \mu(\bm{\zeta};\delta) \otimes E_{n}) \otimes \bm{y}(\delta)]_{i}
		> \mu(\bm{\zeta};\delta) \otimes y_{i}(\delta)
	\end{align*}
	by considering the limit $\delta \to +0$.
	Since $\bm{y}(\delta) \in W(A(\bm{\zeta};\delta), \mu(\bm{\zeta};\delta), \mathcal{C}(\bm{\zeta};\delta))$,
	for each $i \in I$ there exists at least two indices $k$ such that 
	\begin{align*}
		[(A(\bm{\zeta};\delta) \oplus \mu(\bm{\zeta};\delta) \otimes E_{n}) \otimes \bm{y}(\delta)]_{i}
		= (a_{ik} - \zeta_{ik}\delta) \otimes y_{k}(\delta). 
	\end{align*}	
	For such $k$, taking the limit $\delta \to +0$, we have
	\begin{align*}
		&[(A \oplus \mu \otimes E_{n}) \otimes \bm{y}]_{i} = a_{ik} \otimes y_{k} \\
		\Longrightarrow \qquad 
		& {}^t\!\bm{x} \otimes A \otimes \bm{y} = x_{i} \otimes [A \otimes \bm{y}]_{i} = x_{i} \otimes a_{ik} \otimes y_{k} \\
		\Longrightarrow \qquad
		&(i,k) \in \Omega \\
		\Longrightarrow \qquad
		&k \in J.
	\end{align*}
	Hence, the tropical kernel of  $[A(\bm{\zeta},\delta)]_{IJ}$ has a nontrivial vector $[\bm{y}(\delta)]_{J}$,
	where $[*]_{IJ}$ is the submatrix whose rows and columns are indexed by $I$ and $J$, respectively,
	and $[\bm{y}]_{J}$ is the restriction of $\bm{y}$ to the entries indexed by $J$.
	Since $|I| \geq |J|$, there is a $|J| \times |J|$ singular submatrix by Proposition \ref{ker},
	which contradicts the choice of $\bm{\zeta} \in Z$.
	Thus, we have proved that $|I| < |J|$.
	\par
	Next, we show that there exist at least two $j \in J$ such that $[{}^t\!\bm{x} \otimes A]_{j} = \lambda \otimes x_{j}$.
	On the contrary, suppose that there is a subset $J' \subset J$ with $|J'| = |J| - 1$ such that
	$[{}^t\!\bm{x} \otimes A]_{j} > \lambda \otimes x_{j}$ for all $j \in J'$.
	By the similar argument above, 
	 the tropical kernel of $[{}^t\!A(\bm{\zeta};\delta)]_{J'I}$ has a nontrivial vector $[\bm{x}(\delta)]_{I}$.
	Since $|J'| = |J| - 1 \geq |I|$, this contradicts the choice of $\bm{\zeta} \in Z$.
	\par
	Taking two distinct indices $j_{1},j_{2} \in J$ such that $[{}^t\!\bm{x} \otimes A]_{j_{p}} = \lambda \otimes x_{j_{p}}, p=1,2$,
	we have
	\begin{align*}
		{}^t\!\bm{x} \otimes A \otimes \bm{y} = [{}^t\!\bm{x} \otimes A]_{j_{p}} \otimes y_{j_{p}} 
		= \lambda \otimes x_{j_{p}} \otimes y_{j_{p}}, \quad p=1,2.
	\end{align*}
	Hence, we have ${}^t\! \bm{x} \otimes \bm{y} = x_{j_{1}} \otimes y_{j_{1}} = x_{j_{2}} \otimes  y_{j_{2}}$, proving the orthogonality.
\end{proof}
\begin{example}
	We consider the symmetric matrix $A$ in Example \ref{algeigvecexm}:
	\begin{align*}
		A = \begin{pmatrix} 
			6 & 5 & 0 \\ 5 & 4 & \varepsilon \\ 0 & \varepsilon & 2
		\end{pmatrix}.
	\end{align*}
	The algebraic eigenvalues are $6,4,2$ and the corresponding algebraic eigenvectors are
	\begin{align*}
		\bm{x}_{1} = {}^t\!(0,-1,-6), \quad \bm{x}_{2} = {}^t\!(-1,0,-5), \quad \bm{x}_{3} = {}^t\!(-6,-5,0), 
	\end{align*}
	respectively.
	Then, we have
	\begin{align*}
		{}^t\!\bm{x}_{1} \otimes \bm{x}_{2} = (-1) \oplus (-1) \oplus (-11), \\
		{}^t\!\bm{x}_{1} \otimes \bm{x}_{3} = (-6) \oplus (-6) \oplus (-6), \\
		{}^t\!\bm{x}_{2} \otimes \bm{x}_{3} = (-7) \oplus (-5) \oplus (-5).
	\end{align*}
	Hence, any two of $\bm{x}_{1}, \bm{x}_{2}$ and $\bm{x}_{3}$ are orthogonal to each other.
\end{example}

\section{Proofs of results in Section 2.5}
In this section, we give proofs of propositions in Section 2.5.
For an algebraic eigenvalue $\lambda$ of $A \in \mathbb{R}_{\max}^{n \times n}$ and a $\lambda$-maximal multi-circuit $\mathcal{C}$,
we use the notation
\begin{align}
	W(A,\lambda,\mathcal{C}) =  \{ \bm{x} \in \mathbb{R}_{\max}^{n} \ |\ 
		(A_{\setminus\mathcal{C}} \oplus \lambda \otimes E_\mathcal{C}) \otimes \bm{x} 
		= (A_\mathcal{C} \oplus \lambda \otimes E_{\setminus\mathcal{C}}) \otimes \bm{x} \}
\end{align}
even when $A$ does not satisfy the assumption ($\bigstar$).
As noted in Section 2, the algebraic eigenspace of $A$ with respect to $\lambda$ is
\begin{align*}
	W(A,\lambda) = \bigcap_{\bm{\zeta} \in Z} \lim_{\delta \to +0} 
		\bigoplus_{p} W(A(\bm{\zeta};\delta), \lambda^{(p)}(\bm{\zeta};\delta), \mathcal{C}^{(p)}(\bm{\zeta};\delta)),
\end{align*}
where $\lambda^{(p)}(\bm{\zeta};\delta),\, p=1,2,\dots,$ are the (finite number of) roots of 
$\chi_{A(\bm{\zeta};\delta)}(t)$ in $\mathcal{B}(\lambda, n\delta)$.
\begin{proof}[Proof of Proposition \ref{subspace} for $\lambda \neq \varepsilon$]
	It is sufficient to prove that 
	\begin{align*}
		\tilde{W} := \lim_{\delta \to +0} \bigoplus_{p} 
		W(A(\bm{\zeta};\delta), \lambda^{(p)}(\bm{\zeta};\delta), \mathcal{C}^{(p)}(\bm{\zeta};\delta))
	\end{align*}
	is a subspace for any $\bm{\zeta} \in Z$.
	Let $\bm{g}_{i}$ be the $i$th column of $\Gamma(A,\lambda)$ and 
	$\bm{g}^{(p)}_{i}(\delta)$ be the $i$th column of $\Gamma(A(\bm{\zeta};\delta), \lambda^{(p)}(\bm{\zeta};\delta))$.
	When $\delta$ is sufficiently small,
	we can take $\mathcal{C}^{(p)}(\bm{\zeta};\delta)$ independent of $\delta$.
	A basis of $W(A(\bm{\zeta};\delta), \lambda^{(p)}(\bm{\zeta};\delta), \mathcal{C}^{(p)}(\bm{\zeta};\delta))$ can be taken as
	$\{ \bm{g}^{(p)}_{i}(\delta) \ |\ i \in I^{(p)} \}$ for some $I^{(p)} \subset \{1,2,\dots,n\}$ by Proposition \ref{balc} and \ref{balc-adj}.
	Now, we show that $\tilde{W}$ is a subspace spanned by $\{ \bm{g}_{i} \ |\ i \in \bigcup_{p} I^{(p)} \}$.
	\par
	Since entries of $\Gamma(A(\bm{\zeta};\delta), \lambda^{(p)}(\bm{\zeta};\delta))$ are continuous in $\delta$,
	we have 
	\begin{align*}
		\lim_{\delta \to +0} \Gamma(A(\bm{\zeta};\delta), \lambda^{(p)}(\bm{\zeta};\delta)) = \Gamma(A,\lambda),
	\end{align*}
	which yields $\bm{g}_{i} = \lim_{\delta \to +0} \bm{g}^{(p)}_{i}(\delta)$.
	Hence, the subspace spanned by $\{ \bm{g}_{i} \ |\ i \in \bigcup_{p} I^{(p)} \}$ is contained in $\tilde{W}$.
	\par
	Conversely, take any vector $\bm{x} \in \tilde{W}$. Then, there exist vectors
	 $\bm{x}(\delta) \in \bigoplus_{p} W(A(\bm{\zeta};\delta), \lambda^{(p)}(\bm{\zeta};\delta), \mathcal{C}^{(p)}(\bm{\zeta};\delta))$
	for $\delta > 0$ such that $\lim_{\delta \to +0} \bm{x}(\delta) = \bm{x}$.
	Without loss of generality, we may assume that
	\begin{align*}
		&\bm{x}(\delta) = \bigoplus_{p} \bigoplus_{i \in I^{(p)}} \alpha^{(p)}_{i}(\delta) \otimes \bm{g}^{(p)}_{i}(\delta), \\
		&\alpha^{(p)}_{i}(\delta) = \max\{ \alpha \in \mathbb{R}_{\max} \ |\ \alpha \otimes \bm{g}^{(p)}_{i}(\delta) \leq \bm{x}(\delta) \}.
	\end{align*}
	Since $\bm{g}_{i} = \lim_{\delta \to +0} \bm{g}^{(p)}_{i}(\delta)$, 
	the limit $\alpha_{i} := \lim_{\delta \to +0} \alpha^{(p)}_{i}(\delta)$ exists and 
	then $\bm{x} = \bigoplus_{i \in \bigcup_{p} I^{(p)}} \alpha_{i} \otimes \bm{g}_{i}$.
	\par
	Thus, we conclude that
	$\tilde{W}$ is a max-plus subspace of $\mathbb{R}_{\max}^{n}$ for all $\bm{\zeta} \in Z$ and so is $W(A,\lambda)$.
\end{proof}
Let $W'(A,\lambda)$ be the subspace spanned by the columns of $\Gamma(A,\lambda)$
corresponding to indices in valid equivalent classes, see Definition \ref{validclass}.
We will show that $W(A,\lambda) = W'(A,\lambda)$.
\begin{lemma} \label{shortlong}
	Let $\mathcal{C}_{s}$ and $\mathcal{C}_{l}$ be the shortest and the longest $\lambda$-maximal multi-circuits, respectively.
	Then, for each valid equivalent class $H$, there exists $j \in H$ such that $j \in V(\mathcal{C}_{l}) \setminus V(\mathcal{C}_{s})$.
\end{lemma}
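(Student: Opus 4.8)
The plan has three stages. The first is routine and relies only on the way $\psi_{\mathcal{C}}$ is built. If $H$ is valid, fix distinct $\lambda$-maximal multi-circuits $\mathcal{C},\mathcal{C}'$ with $\ell(\mathcal{C})<\ell(\mathcal{C}')$ and $V(\psi_{\mathcal{C}}(\mathcal{C}'))\subseteq\sigma_{\mathcal{C}}(H)$. In the procedure of Definition \ref{mappsi}, each circuit appended to $\psi_{\mathcal{C}}(\mathcal{C}')$ is opened in step~2 and closed in step~4 at a seed vertex lying in $V(\mathcal{C}')\setminus V(\mathcal{C})$, and step~5 forces every vertex of $V(\mathcal{C}')\setminus V(\mathcal{C})$ to be consumed; hence every circuit of $\psi_{\mathcal{C}}(\mathcal{C}')$ meets $V(\mathcal{C}')\setminus V(\mathcal{C})$, so $V(\mathcal{C}')\setminus V(\mathcal{C})\subseteq V(\psi_{\mathcal{C}}(\mathcal{C}'))$. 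Now $\sigma_{\mathcal{C}}$ is the permutation of $\{1,\dots,n\}$ whose nontrivial cycles are the circuits of $\mathcal{C}$ and which fixes every vertex outside $V(\mathcal{C})$, so it fixes each vertex of $V(\mathcal{C}')\setminus V(\mathcal{C})$; together with $V(\psi_{\mathcal{C}}(\mathcal{C}'))\subseteq\sigma_{\mathcal{C}}(H)$ this gives $V(\mathcal{C}')\setminus V(\mathcal{C})\subseteq H$, and the left side is nonempty since $\ell(\mathcal{C})=|V(\mathcal{C})|<|V(\mathcal{C}')|=\ell(\mathcal{C}')$. Reading the same observation with the extremal pair $(\mathcal{C}_s,\mathcal{C}_l)$: every critical circuit in $\psi_{\mathcal{C}_s}(\mathcal{C}_l)$ meets $V(\mathcal{C}_l)\setminus V(\mathcal{C}_s)$, and such vertices are fixed by $\sigma_{\mathcal{C}_s}$. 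So it suffices to prove that the critical circuit of $G(B_{A,\lambda,\mathcal{C}_s})$ on which $H$ is supported is one of the circuits of $\psi_{\mathcal{C}_s}(\mathcal{C}_l)$.

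The second stage gives meaning to ``the critical circuit on which $H$ is supported''. By Propositions \ref{balc}, \ref{balc-adj} and \ref{generalize}, the columns of $\Gamma(A,\lambda)$ indexed by $H$ are, up to the relabeling $\sigma_{\mathcal{C}_s}$, a family of mutually proportional eigenvector-columns of $(B_{A,\lambda,\mathcal{C}_s})^{*}$; by Lemma \ref{minimalelem}, which says that two such columns are proportional exactly when their indices lie on a common critical circuit, this family of indices is precisely the vertex set of a single critical circuit $D_H$ of $G(B_{A,\lambda,\mathcal{C}_s})$, so $\sigma_{\mathcal{C}_s}^{-1}(H)=V(D_H)$. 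Re-expressing the validity condition through Proposition \ref{balc-adj} and transporting between the graphs $G(B_{A,\lambda,\mathcal{C}})$ and $G(B_{A,\lambda,\mathcal{C}_s})$ by the $\varphi/\psi$-calculus of Definitions \ref{mapvarphi} and \ref{mappsi}, which carries $\lambda$-maximal multi-circuits and critical circuits between these graphs, one relates $D_H$ to the critical circuits picked out by $\psi_{\mathcal{C}_s}(\mathcal{C}_l)$; since $\mathcal{C}_l$ is a longest $\lambda$-maximal multi-circuit, $\psi_{\mathcal{C}_s}(\mathcal{C}_l)$ is a maximum collection of disjoint critical circuits, and the aim is to show $D_H$ is one of them. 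Granting that, the first stage produces $j\in H$ with $j\in V(\mathcal{C}_l)\setminus V(\mathcal{C}_s)$.

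The structural input for that last step is a weight count on the characteristic polynomial. Writing $\chi_A(\mu)=\bigoplus_k c_k\otimes\mu^{\otimes k}$ with $c_k$ the maximal weight of a multi-circuit of length $n-k$, every elementary circuit contained in $\mathcal{C}_s$ has average weight strictly larger than $\lambda$ — deleting one of average $\le\lambda$ would produce either a strictly shorter $\lambda$-maximal multi-circuit or a term exceeding $\chi_A(\lambda)$ — and therefore meets every $\lambda$-maximal multi-circuit; dually, every elementary circuit vertex-disjoint from $\mathcal{C}_l$ has average weight strictly smaller than $\lambda$. I expect the main obstacle to be the reconciliation of the second stage, namely the proof that $D_H$ really is one of the circuits of $\psi_{\mathcal{C}_s}(\mathcal{C}_l)$: this is where the inequalities above must be threaded through the $\varphi/\psi$ rerouting, the delicate case being elementary circuits that only partially overlap $\mathcal{C}$, $\mathcal{C}_s$, or $\mathcal{C}_l$. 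None of the individual ingredients is deep, but the combinatorial bookkeeping is where the real work lies.
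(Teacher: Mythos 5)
Your first stage is sound and matches an ingredient the paper also uses implicitly: by step~5 of Definition~\ref{mappsi} every vertex of $V(\mathcal{C}')\setminus V(\mathcal{C})$ is eventually chosen as a seed, so $V(\mathcal{C}')\setminus V(\mathcal{C})\subseteq V(\psi_{\mathcal{C}}(\mathcal{C}'))\subseteq\sigma_{\mathcal{C}}(H)$, and since $\sigma_{\mathcal{C}}$ fixes vertices outside $V(\mathcal{C})$ this gives $\emptyset\neq V(\mathcal{C}')\setminus V(\mathcal{C})\subseteq H$ for the particular witness pair $(\mathcal{C},\mathcal{C}')$ certifying validity. But that is the easy part. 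The entire content of the lemma is the transfer from the arbitrary witness pair $(\mathcal{C},\mathcal{C}')$ to the extremal pair $(\mathcal{C}_s,\mathcal{C}_l)$, and your proposal reduces this to the assertion that the critical circuit $D_H$ of $G(B_{A,\lambda,\mathcal{C}_s})$ supporting $H$ is one of the circuits of $\psi_{\mathcal{C}_s}(\mathcal{C}_l)$ --- and then explicitly leaves that assertion unproved (``the aim is to show $D_H$ is one of them'', ``I expect the main obstacle to be \ldots''). That is a genuine gap, not bookkeeping: it is exactly where the paper's proof does all of its work. The paper does not prove your membership claim (which is stronger than necessary and not obviously true, since an equivalence class may correspond to a whole strongly connected component of the critical graph rather than a single circuit, and in any case need only be \emph{intersected} by, not equal to, a circuit of $\psi_{\mathcal{C}_s}(\mathcal{C}_l)$). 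Instead it performs a two-step transport: first it passes from the witness $\mathcal{C}_1$ to $\mathcal{C}_2=\varphi_{\mathcal{C}_1}(D)$ and extracts the subfamily $\mathcal{D}^{\circ}_{s}$ of circuits of $\psi_{\mathcal{C}_2}(\mathcal{C}_s)$ meeting $\overleftarrow{D}$, proving $\mathcal{D}^{\circ}_{s}\neq\emptyset$ by a length identity combined with the \emph{minimality} of $\ell(\mathcal{C}_s)$; it then repeats the argument in $G(B_{A,\lambda,\mathcal{C}_s})$ with the circuits of $\psi_{\mathcal{C}_s}(\mathcal{C}_l)$ meeting $\overleftarrow{\mathcal{D}^{\circ}_{s}}$, using the \emph{maximality} of $\ell(\mathcal{C}_l)$. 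Both nonemptiness claims require this exchange-and-count argument; neither follows from your third-stage observations about average weights of circuits in $\mathcal{C}_s$ or disjoint from $\mathcal{C}_l$, which, while plausible, are never connected to the conclusion.

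Two smaller points. Your appeal to Lemma~\ref{minimalelem} for ``columns proportional iff indices on a common critical circuit'' is a misattribution (that fact comes from the structure of $P^{*}$ and Proposition~\ref{eigvec1}, and in general yields a critical \emph{component}, not a single circuit, since the disjointness hypothesis of Theorem~\ref{main1} is not assumed here). And the inference ``every circuit of $\psi_{\mathcal{C}}(\mathcal{C}')$ meets $V(\mathcal{C}')\setminus V(\mathcal{C})$, so $V(\mathcal{C}')\setminus V(\mathcal{C})\subseteq V(\psi_{\mathcal{C}}(\mathcal{C}'))$'' has the logic backwards as written; the correct justification is the exhaustion of seeds in step~5, which you do also mention. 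To complete the proof you would need to supply the transport argument above, or an equivalent replacement.
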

\begin{proof}
	Since $H$ is valid, we can choose a $\lambda$-maximal mulit-circuit $\mathcal{C}_{1}$ and
	a critical circuit $D$ with $V(D) \subset \sigma_{\mathcal{C}_{1}}(H)$ in $G(B_{A,\lambda,\mathcal{C}_{1}})$
	such that the length of $\mathcal{C}_{2} := \varphi_{\mathcal{C}_{1}} (D)$ is larger than that of $\mathcal{C}_{1}$.
	Note that $V(\overleftarrow{D}) \subset \sigma_{\mathcal{C}_{2}}(H)$, 
	by the structure of the graph $G(B_{A,\lambda,\mathcal{C}_{2}})$ and the construction of the map $\varphi_{\mathcal{C}_{2}}$,
	where $\overleftarrow{D}$ is obtained from $D$ by reversing the directions of all edges.
	Let $\mathcal{D}^{\circ}_{s}$ be the collection of the circuits in $G(B_{A,\lambda,\mathcal{C}_{2}})$
	that are contained in $\psi_{\mathcal{C}_{2}} (\mathcal{C}_{s})$ and intersect $\overleftarrow{D}$.
	To prove $\mathcal{D}^{\circ}_{s} \neq \emptyset$,
	we show that the length of $\mathcal{C}^{\circ}_{s} := \varphi_{\mathcal{C}_{2}}(\mathcal{D}^{\circ}_{s})$ 
	is smaller than $\ell(\mathcal{C}_{2})$.
	On the contrary, suppose  $\ell(\mathcal{C}^{\circ}_{s}) \geq \ell(\mathcal{C}_{2})$. 
	The multi-circuit 
	$\mathcal{D}' := \psi_{\mathcal{C}_{2}} (\mathcal{C}_{s}) \setminus \mathcal{D}^{\circ}_{s} \cup \{ \overleftarrow{D} \}$
	has the weight $0$ in $G(B_{A,\lambda,\mathcal{C}_{2}})$.
	The length of $\mathcal{C}' := \varphi_{\mathcal{C}_{2}}(\mathcal{D}')$ would be smaller than $\ell(\mathcal{C}_{s})$
	because of the equality
	\begin{align*}
		\ell(\mathcal{C}') - \ell(\mathcal{C}_{2}) 
		= (\ell(\mathcal{C}_{s}) - \ell(\mathcal{C}_{2})) - (\ell(\mathcal{C}^{\circ}_{s}) - \ell(\mathcal{C}_{2})) 
			+ (\ell(\mathcal{C}_{1}) - \ell(\mathcal{C}_{2})), 
	\end{align*}
	which contradicts the minimality of the length of $\mathcal{C}_{s}$.
	Thus, since $\mathcal{D}^{\circ}_{s} = \emptyset$ implies 
	$\mathcal{C}^{\circ}_{s} = \varphi_{\mathcal{C}_{2}}(\emptyset) = \mathcal{C}_{2}$, 
	we have $\mathcal{D}^{\circ}_{s} \neq \emptyset$  and $V(\mathcal{D}^{\circ}_{s}) \subset \sigma_{\mathcal{C}_{2}}(H)$.	
	In this case, we also have $V(\overleftarrow{\mathcal{D}^{\circ}_{s}}) \subset \sigma_{\mathcal{C}_{s}}(H)$.
	Let $\mathcal{D}^{\circ}_{l}$ be the collection of the circuits in $G(B_{A,\lambda,\mathcal{C}_{s}})$
	that are contained in $\psi_{\mathcal{C}_{s}} (\mathcal{C}_{l})$ and intersect $\overleftarrow{\mathcal{D}^{\circ}_{s}}$.
	Then, the length of $\mathcal{C}^{\circ}_{l} := \varphi_{\mathcal{C}_{s}}(\mathcal{D}^{\circ}_{l})$ is larger than $\ell(\mathcal{C}_{s})$
	by the similar reason to above.
	Hence, we have $\emptyset \neq V(\mathcal{C}^{\circ}_{l}) \setminus V(\mathcal{C}_{s})
	\subset \sigma_{\mathcal{C}_{s}}^{-1}(V(\mathcal{D}^{\circ}_{l})) \subset H$.
	Since $V(\mathcal{C}^{\circ}_{l}) \setminus V(\mathcal{C}_{s}) \subset V(\mathcal{C}_{l}) \setminus V(\mathcal{C}_{s})$,
	we can find $j \in H$ such that $j \in V(\mathcal{C}_{l}) \setminus V(\mathcal{C}_{s})$.
\end{proof}
\begin{proof}[Proof of Proposition \ref{modifeigspace}]
	We will show the equality
	\begin{align*}
		W'(A,\lambda) = \lim_{\delta \to +0} \bigoplus_{p} 
		W(A(\bm{\zeta};\delta), \lambda^{(p)}(\bm{\zeta};\delta), \mathcal{C}^{(p)}(\bm{\zeta};\delta))
	\end{align*}
	for all $\bm{\zeta} \in Z$.
	We denote the right-hand side of the above equality by $\tilde{W}$.
	It is sufficient to prove that the set of columns, or equivalence classes, of $\Gamma(A,\lambda)$ 
	spanning $W'(A,\lambda)$ and $\tilde{W}$ are identical.
	\par
	First, consider the $i$th column of $\Gamma(A,\lambda)$ that is in $\tilde{W}$.
	Taking sufficiently small $\delta > 0$, we see that the $i$th column of 
	$\Gamma(A(\bm{\zeta};\delta), \lambda^{(p)}(\bm{\zeta};\delta))$ belongs to 
	$W(A(\bm{\zeta};\delta), \lambda^{(p)}(\bm{\zeta};\delta), \mathcal{C}^{(p)}(\bm{\zeta};\delta))$ for some $p$.
	Since $A(\bm{\zeta};\delta)$ satisfies ($\bigstar$), 
	there exist $\lambda^{(p)}(\bm{\zeta};\delta)$-maximal multi-circuits $\mathcal{C}_{1}$ and $\mathcal{C}_{2}$ 
	in $G(A(\bm{\zeta};\delta))$ with $\ell(\mathcal{C}_{1}) \neq \ell(\mathcal{C}_{2})$ 
	such that $i \in V(\mathcal{C}_{1}) \setminus V(\mathcal{C}_{2})$. 
	Both $\mathcal{C}_{1}$ and $\mathcal{C}_{2}$ are also $\lambda$-maximal multi-circuits in $G(A)$.
	Hence, $i$ is in a valid equivalence class.
	\par
	Conversely, let $H$ be a valid equivalence class.
	Among the roots of $\chi_{A(\bm{\zeta};\delta)}(t)$ in $\mathcal{B}(\lambda, n\delta)$,
	let $\lambda^{(1)}(\bm{\zeta};\delta)$ and $\lambda^{(q)}(\bm{\zeta};\delta)$ 
	denote the maximum and the minimum ones, respectively.
	For sufficiently small $\delta$, the $\lambda^{(1)}(\bm{\zeta};\delta)$-maximal multi-circuit $\mathcal{C}_{s}$
	in $G(A(\bm{\zeta};\delta))$ with the minimum length is also an $\lambda$-maximal multi-circuit in $G(A)$ with the minimum length.
	Similarly, the $\lambda^{(q)}(\bm{\zeta};\delta)$-maximal multi-circuit $\mathcal{C}_{l}$ in $G(A(\bm{\zeta};\delta))$
	with the maximum length is also an $\lambda$-maximal multi-circuit in $G(A)$ with the maximum length.
	By Lemma \ref{shortlong}, there exists $j \in H \cap V(\mathcal{C}_{l}) \setminus V(\mathcal{C}_{s})$.
	For some $\lambda^{(p)}(\bm{\zeta};\delta) \in \mathcal{B}(\lambda, n\delta)$,
	there are two $\lambda^{(p)}(\bm{\zeta};\delta)$-maximal multi-circuits $\mathcal{C}_{1}$ and $\mathcal{C}_{2}$ 
	with $j \in V(\mathcal{C}_{2}) \setminus V(\mathcal{C}_{1})$.
	Hence, the $j$th column of $\Gamma(A(\bm{\zeta};\delta), \lambda^{(p)}(\bm{\zeta};\delta))$ belongs to 
	$W(A(\bm{\zeta};\delta), \lambda^{(p)}(\bm{\zeta};\delta), \mathcal{C}^{(p)}(\bm{\zeta};\delta))$.
	By the proof of Proposition \ref{subspace}, the $j$th column of $\Gamma(A,\lambda)$ is in $\tilde{W}$.
\end{proof}
In particular, the proof above shows that $\tilde{W}$ does not depend on the choice of $\bm{\zeta} \in Z$.
Hence, for $\lambda \neq \varepsilon$, we have 
\begin{align*}
	W(A, \lambda) = \lim_{\delta \to +0} \bigoplus_{p} 
		W(A(\bm{\zeta};\delta), \lambda^{(p)}(\bm{\zeta};\delta), \mathcal{C}^{(p)}(\bm{\zeta};\delta))
\end{align*}
for any $\bm{\zeta} \in Z$.
\begin{proof}[Proof of Proposition \ref{ifonlyif} for $\lambda \neq \varepsilon$]
	First, $\lambda \neq \varepsilon$ is a root of $\chi_{A}(t)$ if and only if there are two $\lambda$-maximal multi-circuits
	with the different lengths.
	On the other hand, for columns of $\Gamma(A,\lambda)$, there is a valid equivalent class
	if and only if there are two $\lambda$-maximal multi-circuits with the different lengths.
	Hence, $\lambda \neq \varepsilon$ is a root of $\chi_{A}(t)$ if and only if $W(A,\lambda) = W'(A,\lambda) \neq \{\mathcal{E}\}$.
\end{proof}
\begin{proof}[Proof of Proposition \ref{dimsub2} for $\lambda \neq \varepsilon$]
	By Proposition \ref{dimsub},
	the dimension of $W(A(\bm{\zeta};\delta), \lambda^{(p)}(\bm{\zeta};\delta), \mathcal{C}^{(p)}(\bm{\zeta};\delta))$
	cannot exceed the multiplicity of the root $\lambda^{(p)}(\bm{\zeta};\delta)$ of $\chi_{A(\bm{\zeta};\delta)}(t)$.
	Summing up the dimensions of the subspaces and the multiplicities for all $p$, respectively,
	we prove Proposition \ref{dimsub2}.
\end{proof}
\begin{proof}[Proof of Proposition \ref{generalize} for $\lambda \neq \varepsilon$]
	If $A$ satisfies ($\bigstar$), then we can take $\zeta_{ij} = 0$ for all $i,j$.
	Then, 
	\begin{align*}
		W(A,\lambda)
		=W(A(\bm{\zeta};\delta), \lambda^{(p)}(\bm{\zeta};\delta), \mathcal{C}^{(p)}(\bm{\zeta};\delta)) 
		= W(A,\lambda,\mathcal{C})
	\end{align*}
	for a $\lambda$-maximal multi-circuit $\mathcal{C}$.
\end{proof}
Next, we consider the case with $\lambda = \varepsilon$.
Let $\mathcal{C}$ be a $\lambda$-maximal multi-circuit in $G(A)$.
Then, the equation defining $W(A,\varepsilon,\mathcal{C})$ can be written as a pair of two equations:
\begin{align*}
	[A_{\setminus\mathcal{C}}]_{KK} \otimes [\bm{x}]_{K} \oplus [A]_{KL} \otimes [\bm{x}]_{L}
		&= [A_\mathcal{C}]_{KK} \otimes [\bm{x}]_{K}, \\ 
	[A]_{LK} \otimes [\bm{x}]_{K} \oplus [A]_{LL} \otimes [\bm{x}]_{L} &= \mathcal{E},
\end{align*}
where $K = V(\mathcal{C})$ and $L = \{1,2,\dots,n\} \setminus V(\mathcal{C})$.
If $A$ satisfies ($\bigstar$), then the graph $G([A_\mathcal{C}]_{KK}^{-1} \otimes [A_{\setminus\mathcal{C}}]_{KK})$
has no nonnegative circuit and hence the vector $\bm{x}$ satisfying the first equation is uniquely determined from $[\bm{x}]_{L}$ by
\begin{align*}
	[\bm{x}]_{K} = ([A_\mathcal{C}]_{KK}^{-1} \otimes [A_{\setminus\mathcal{C}}]_{KK})^{*} \otimes 
				([A_\mathcal{C}]_{KK}^{-1} \otimes [A]_{KL}) \otimes [\bm{x}]_{L}.
\end{align*}
Let $\xi_{A,\mathcal{C}}$ be the linear map $\mathbb{R}_{\max}^{n} \to \mathbb{R}_{\max}^{n}$ defined by
\begin{align*}
	[\xi_{A,\mathcal{C}}(\bm{x})]_{K} &= ([A_\mathcal{C}]_{KK}^{-1} \otimes [A_{\setminus\mathcal{C}}]_{KK})^{*} \otimes 
				([A_\mathcal{C}]_{KK}^{-1} \otimes [A]_{KL}) \otimes [\bm{x}]_{L}, \\
	[\xi_{A,\mathcal{C}}(\bm{x})]_{L} &= [\bm{x}]_{L}
\end{align*}
for all $\bm{x} \in \mathbb{R}_{\max}^{n}$.
The next lemma proves Proposition \ref{subspace} for $\lambda = \varepsilon$.
\begin{lemma} 
	For any $\bm{\zeta} \in Z$, let $\mathcal{C}(\bm{\zeta})$ be 
	 the $\varepsilon$-maximal multi-circuit in $G(A(\bm{\zeta};\delta))$ for sufficiently small $\delta > 0$.
	Then, $\lim_{\delta \to +0} W(A(\bm{\zeta};\delta), \varepsilon, \mathcal{C}(\bm{\zeta}))$
	is a subspace of $\mathbb{R}_{\max}^{n}$ spanned by 
	\begin{align*}
		\{ \xi_{A,\mathcal{C}(\bm{\zeta})}(\bm{e}_{j}) \ |\ 
			j \in L, [A \otimes \xi_{A,\mathcal{C}(\bm{\zeta})}(\bm{e}_{j})]_{L} = \mathcal{E} \},
	\end{align*}
	where $\bm{e}_{j}$ is the $j$th standard basis vector of $\mathbb{R}_{\max}^{n}$.
\end{lemma}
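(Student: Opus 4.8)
The plan is to first obtain an explicit description of $W(A',\varepsilon,\mathcal{C})$ for a fixed matrix $A'$ satisfying {\rm ($\bigstar$)} with $\varepsilon$-maximal multi-circuit $\mathcal{C}$, and then apply this with $A' = A(\bm{\zeta};\delta)$ and pass to the limit $\delta \to +0$. Write $K = V(\mathcal{C})$ and $L = \{1,2,\dots,n\} \setminus V(\mathcal{C})$. By the discussion preceding the lemma, a vector $\bm{x}$ satisfies the first of the two defining equations of $W(A',\varepsilon,\mathcal{C})$ if and only if $\bm{x} = \xi_{A',\mathcal{C}}(\bm{x})$, i.e.\ $\bm{x}$ lies in the image of $\xi_{A',\mathcal{C}}$. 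Since $[\xi_{A',\mathcal{C}}(\bm{x})]_{L} = [\bm{x}]_{L}$ while $[\xi_{A',\mathcal{C}}(\bm{x})]_{K}$ depends only on $[\bm{x}]_{L}$, the map $\xi_{A',\mathcal{C}}$ is idempotent, satisfies $\xi_{A',\mathcal{C}}(\bm{e}_{j}) = \mathcal{E}$ for $j \in K$, and by max-plus linearity
\begin{align*}
	\xi_{A',\mathcal{C}}(\bm{x}) = \bigoplus_{j \in L} x_{j} \otimes \xi_{A',\mathcal{C}}(\bm{e}_{j}).
\end{align*}
Hence the solution set of the first equation is exactly the subspace spanned by $\{ \xi_{A',\mathcal{C}}(\bm{e}_{j}) \mid j \in L \}$.

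Next I would intersect with the second equation $[A' \otimes \bm{x}]_{L} = \mathcal{E}$. Writing a solution of the first equation as $\bm{x} = \bigoplus_{j \in L} x_{j} \otimes \xi_{A',\mathcal{C}}(\bm{e}_{j})$, for $i \in L$ we have $[A' \otimes \bm{x}]_{i} = \bigoplus_{j \in L} x_{j} \otimes [A' \otimes \xi_{A',\mathcal{C}}(\bm{e}_{j})]_{i}$, and a max-plus sum of elements of $\mathbb{R}_{\max}$ equals $\varepsilon$ if and only if every summand equals $\varepsilon$. Thus the second equation holds precisely when $x_{j} = \varepsilon$ for each $j \in L$ with $[A' \otimes \xi_{A',\mathcal{C}}(\bm{e}_{j})]_{L} \neq \mathcal{E}$. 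Setting $L_{0}(A') = \{ j \in L \mid [A' \otimes \xi_{A',\mathcal{C}}(\bm{e}_{j})]_{L} = \mathcal{E} \}$ and checking the reverse inclusion with the help of the idempotency of $\xi_{A',\mathcal{C}}$, this yields that $W(A',\varepsilon,\mathcal{C})$ is the subspace spanned by $\{ \xi_{A',\mathcal{C}}(\bm{e}_{j}) \mid j \in L_{0}(A') \}$; in particular it is a max-plus subspace.

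Now I would specialize to $A' = A(\bm{\zeta};\delta)$ with small $\delta > 0$, for which $\mathcal{C} = \mathcal{C}(\bm{\zeta})$. Because $[A(\bm{\zeta};\delta)]_{ij} \neq \varepsilon$ exactly when $a_{ij} \neq \varepsilon$, the digraph $G([A(\bm{\zeta};\delta)_{\mathcal{C}(\bm{\zeta})}]_{KK}^{-1} \otimes [A(\bm{\zeta};\delta)_{\setminus \mathcal{C}(\bm{\zeta})}]_{KK})$ does not depend on $\delta$, so the positions of the $\varepsilon$-entries of $\xi_{A(\bm{\zeta};\delta),\mathcal{C}(\bm{\zeta})}(\bm{e}_{j})$ are governed by reachability in a fixed digraph and are independent of $\delta$; consequently the index set $L_{0}(A(\bm{\zeta};\delta))$ equals a fixed set $L_{0}$, and the same reachability description shows $L_{0} = \{ j \in L \mid [A \otimes \xi_{A,\mathcal{C}(\bm{\zeta})}(\bm{e}_{j})]_{L} = \mathcal{E} \}$. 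Moreover, since that digraph has no nonnegative circuit for every small $\delta > 0$, it has no positive-weight circuit in the limit; hence the star operation $(\,\cdot\,)^{*}$ is continuous at the limiting matrix, and therefore so is the formula defining $\xi$, giving $\xi_{A(\bm{\zeta};\delta),\mathcal{C}(\bm{\zeta})}(\bm{e}_{j}) \to \xi_{A,\mathcal{C}(\bm{\zeta})}(\bm{e}_{j})$ as $\delta \to +0$. Finally, the subspaces $W(A(\bm{\zeta};\delta),\varepsilon,\mathcal{C}(\bm{\zeta}))$ are all spanned by the family $\{ \xi_{A(\bm{\zeta};\delta),\mathcal{C}(\bm{\zeta})}(\bm{e}_{j}) \mid j \in L_{0} \}$ with fixed index set, whose members converge to $\{ \xi_{A,\mathcal{C}(\bm{\zeta})}(\bm{e}_{j}) \mid j \in L_{0} \}$; arguing exactly as in the proof of Proposition \ref{subspace} (both inclusions: limits of the generators and their combinations lie in the limit set, and any element of the limit set is realized as $\bigoplus_{j \in L_{0}} \alpha_{j}(\delta) \otimes \xi_{A(\bm{\zeta};\delta),\mathcal{C}(\bm{\zeta})}(\bm{e}_{j})$ with convergent coefficients), one concludes that $\lim_{\delta \to +0} W(A(\bm{\zeta};\delta),\varepsilon,\mathcal{C}(\bm{\zeta}))$ is the subspace spanned by $\{ \xi_{A,\mathcal{C}(\bm{\zeta})}(\bm{e}_{j}) \mid j \in L_{0} \}$, which is the assertion of the lemma.

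The main obstacle is the behaviour at the boundary $\delta = 0$: one must verify that the combinatorial ingredients feeding into $\xi$ — the multi-circuit $\mathcal{C}(\bm{\zeta})$, the digraph underlying the Kleene star, and the index set $L_{0}$ — all stabilize as $\delta \to +0$, and that the Kleene star survives the limit because the relevant circuits, while possibly of weight $0$, never acquire positive weight. The fact that $A$ itself may fail {\rm ($\bigstar$)} — so that the first defining equation for $A$ could have solutions not of the form $\xi_{A,\mathcal{C}(\bm{\zeta})}(\bm{x})$ — is not a problem, since here $\xi_{A,\mathcal{C}(\bm{\zeta})}$ serves only to record the limits of the vectors $\xi_{A(\bm{\zeta};\delta),\mathcal{C}(\bm{\zeta})}(\bm{e}_{j})$.
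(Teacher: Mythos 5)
Your argument is correct and takes essentially the same route as the paper: first identify the spanning set $\{\xi_{A(\bm{\zeta};\delta),\mathcal{C}(\bm{\zeta})}(\bm{e}_{j}) \mid j \in L,\ [A(\bm{\zeta};\delta)\otimes\xi_{A(\bm{\zeta};\delta),\mathcal{C}(\bm{\zeta})}(\bm{e}_{j})]_{L}=\mathcal{E}\}$ of $W(A(\bm{\zeta};\delta),\varepsilon,\mathcal{C}(\bm{\zeta}))$ from the two defining equations, then pass to the limit $\delta\to+0$. The paper compresses both steps into two sentences; your extra checks (stabilization of the index set, continuity of the Kleene star since no circuit becomes positive in the limit, and the two-inclusion limit argument borrowed from the proof of Proposition \ref{subspace}) are precisely the details it leaves implicit.
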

\begin{proof}
	By the above argument, it can be seen that $W(A(\bm{\zeta};\delta), \varepsilon, \mathcal{C}(\bm{\zeta}))$
	is spanned by 
	\begin{align*}
		\{ \xi_{A(\bm{\zeta};\delta),\mathcal{C}(\bm{\zeta})}(\bm{e}_{j}) \ |\ 
			j \in L, [A(\bm{\zeta};\delta) \otimes \xi_{A(\bm{\zeta};\delta),\mathcal{C}(\bm{\zeta})}(\bm{e}_{j})]_{L} 
			= \mathcal{E} \}
	\end{align*}
	for sufficiently small $\delta > 0$.
	Taking the limit $\delta \to +0$, we have proved the lemma.
\end{proof}
From this lemma, the subspace 
$\lim_{\delta \to +0} W(A(\bm{\zeta};\delta), \varepsilon, \mathcal{C}^{(p)}(\bm{\zeta};\delta))$ is determined 
depending only on $\mathcal{C}(\bm{\zeta})$. 
Hence, we denote it by $\tilde{W}(A,\mathcal{C}(\bm{\zeta}))$.
In particular, $W(A,\varepsilon)$ is an intersection of the finite number of max-plus subspaces.
Note that if $\bm{x} \in\tilde{W}(A,\mathcal{C}(\bm{\zeta}))$, then we have $\xi_{A,\mathcal{C}(\bm{\zeta})}(\bm{x}) = \bm{x}$.
\begin{proof}[Proof of Proposition \ref{ifonlyif} for $\lambda = \varepsilon$]
	Suppose $\lambda = \varepsilon$ is an algebraic eigenvalue of $A$.
	Then, consider the parametrized matrix $A(\mu) = A \oplus \mu \otimes E_{n}$.
	If $\mu$ is sufficiently close to $\varepsilon$, then $\mu$ is an algebraic eigenvalue of $A(\mu)$.
	Let $\bm{x}(\mu)$ be an algebraic eigenvector of $A(\mu)$ with respect to $\mu$.
	We may assume that the maximum entry of $\bm{x}(\mu)$ is $0$ 
	and each entry is of the form $\alpha + \beta \mu, \beta \geq 0$. 
	Then, $\lim_{\mu \to \varepsilon} \bm{x}(\mu) \in W(A,\varepsilon)$ and its maximum entry is $0$.
	\par
	Conversely, suppose $\lambda = \varepsilon$ is not an algebraic eigenvalue of $A$.
	This means that $G(A)$ has a multi-circuit with the length $n$, which becomes an $\varepsilon$-maximal multi-circuit.
	If there is a vector $\bm{x} \in W(A,\lambda) \setminus \{\mathcal{E} \}$, 
	then we have a nontrivial vector 
	$\bm{x}(\delta) \in W(A(\bm{\zeta};\delta),\varepsilon,\mathcal{C}(\bm{\zeta}))$ for some $\bm{\zeta} \in Z$
	and sufficiently small $\delta$.
	This implies $\bm{x}(\delta)$ belongs to the tropical kernel of $A(\bm{\zeta};\delta)$.
	Hence, $A(\bm{\zeta};\delta)$ is singular by Proposition \ref{ker}.  
	Then, we have two $\varepsilon$-maximal multi-circuits with the length $n$ in $G(A(\bm{\zeta};\delta))$,
	which contradicts the choice of $A(\bm{\zeta};\delta)$.
\end{proof}
\begin{lemma} \label{xiineq}
	Let $\bm{\zeta}_{1}, \bm{\zeta}_{2} \in Z$.
	If $\bm{x} \in \tilde{W}(A,\mathcal{C}(\bm{\zeta}_{1}))$, then $\xi_{A,\mathcal{C}(\bm{\zeta}_{2})}(\bm{x}) \geq \bm{x}$.
\end{lemma}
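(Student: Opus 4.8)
The plan is to rewrite the asserted inequality as a lower bound on a least solution of a max-plus fixed-point equation and to verify it coordinate by coordinate using the balance equation that $\bm{x}$ inherits in the limit. Throughout, write $\mathcal{C}_{i}:=\mathcal{C}(\bm{\zeta}_{i})$, $K_{i}:=V(\mathcal{C}_{i})$, $L_{i}:=\{1,2,\dots,n\}\setminus K_{i}$ and $\sigma_{i}:=\sigma_{\mathcal{C}_{i}}$ for $i=1,2$.

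The first step is to observe that $\bm{x}$ satisfies $A_{\setminus\mathcal{C}_{1}}\otimes\bm{x}=A_{\mathcal{C}_{1}}\otimes\bm{x}$ (the $\lambda$-terms in \eqref{algeigveceq} vanish for $\lambda=\varepsilon$). Indeed $\bm{x}=\lim_{\delta\to+0}\bm{x}(\delta)$ with $\bm{x}(\delta)\in W(A(\bm{\zeta}_{1};\delta),\varepsilon,\mathcal{C}_{1})$, the entries of $A(\bm{\zeta}_{1};\delta)$ are continuous in $\delta$, and $\oplus,\otimes$ are continuous, so the equation passes to the limit. Reading it row by row yields the two facts I will use: $[A\otimes\bm{x}]_{i}=\varepsilon$ for every $i\notin K_{1}$, and for $i\in K_{1}$ the maximum defining $[A\otimes\bm{x}]_{i}$ is attained at the column $\sigma_{1}(i)$ and also at some column different from $\sigma_{1}(i)$.

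The second step reduces the claim to a single one-step estimate. By the definition of $\xi_{A,\mathcal{C}_{2}}$ we have $[\xi_{A,\mathcal{C}_{2}}(\bm{x})]_{L_{2}}=[\bm{x}]_{L_{2}}$ and $[\xi_{A,\mathcal{C}_{2}}(\bm{x})]_{K_{2}}=M^{*}\otimes\bm{b}$, where $M:=[A_{\mathcal{C}_{2}}]_{K_{2}K_{2}}^{-1}\otimes[A_{\setminus\mathcal{C}_{2}}]_{K_{2}K_{2}}$ and $\bm{b}:=[A_{\mathcal{C}_{2}}]_{K_{2}K_{2}}^{-1}\otimes[A]_{K_{2}L_{2}}\otimes[\bm{x}]_{L_{2}}$; since $\mathcal{C}_{2}$ is $\varepsilon$-maximal, $G(M)$ has no circuit of positive weight, so $M^{*}\otimes\bm{b}$ is the least solution of $\bm{z}=M\otimes\bm{z}\oplus\bm{b}$. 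On the $L_{2}$-entries the claimed inequality is an equality, so it suffices to prove $[\bm{x}]_{K_{2}}\leq M^{*}\otimes\bm{b}$, for which I would first establish the one-step estimate $[\bm{x}]_{K_{2}}\leq M\otimes[\bm{x}]_{K_{2}}\oplus\bm{b}$. After cancelling $[A_{\mathcal{C}_{2}}]_{K_{2}K_{2}}^{-1}$, this is the family of scalar inequalities $a_{ik}\otimes x_{k}\leq\bigoplus_{n\neq k}a_{in}\otimes x_{n}$ with $i=\sigma_{2}^{-1}(k)$, one for each $k\in K_{2}$: the case $i\notin K_{1}$ is trivial (both sides are $\varepsilon$); the case $i\in K_{1}$, $k\neq\sigma_{1}(i)$ follows because the right-hand side contains the column-$\sigma_{1}(i)$ term, which equals $[A\otimes\bm{x}]_{i}\geq a_{ik}\otimes x_{k}$; and the case $i\in K_{1}$, $k=\sigma_{1}(i)$ is exactly the balance at row $i$.

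The final step, deducing $[\bm{x}]_{K_{2}}\leq M^{*}\otimes\bm{b}$ from the one-step estimate, is immediate when $G(M)$ carries no circuit of weight $0$ — iterate, and use that a walk of length $\geq n$ contains a circuit of nonpositive weight — and this is where I expect the genuine obstacle to lie. If $G(M)$ does have a zero-weight circuit, the one-step estimate alone does not pin $[\bm{x}]_{K_{2}}$ below $M^{*}\otimes\bm{b}$: on the set $S$ of coordinates where $x_{k}$ strictly exceeds $[M^{*}\otimes\bm{b}]_{k}$ it would have to be tight around a zero-weight circuit of $G(M)$ contained in $S$, and such a circuit encodes a multi-circuit on $K_{2}$ distinct from $\mathcal{C}_{2}$ but of the same weight, hence a second $\varepsilon$-maximal multi-circuit of $A$. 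To rule this out I would repeat the coordinatewise computation for the approximants $\bm{x}(\delta)$ against $A(\bm{\zeta}_{2};\delta)$, which satisfies ($\bigstar$) by the choice $\bm{\zeta}_{2}\in Z$ and hence has a unique $\varepsilon$-maximal multi-circuit, so that the corresponding matrix has only strictly negative circuits and the star estimate is valid before passing to the limit; keeping the discrepancy between the $\bm{\zeta}_{1}$- and $\bm{\zeta}_{2}$-perturbations under control in this last reduction is the delicate point.
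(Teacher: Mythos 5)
Your first two steps coincide with the paper's own proof: pass the balance relation $A_{\setminus\mathcal{C}_{1}}\otimes\bm{x}=A_{\mathcal{C}_{1}}\otimes\bm{x}$ to the limit, observe that the maximum in each row of $A\otimes\bm{x}$ is consequently attained at least twice, deduce the row-wise inequality $[A_{\setminus\mathcal{C}_{2}}]_{K_{2}K_{2}}\otimes[\bm{x}]_{K_{2}}\oplus[A]_{K_{2}L_{2}}\otimes[\bm{x}]_{L_{2}}\geq[A_{\mathcal{C}_{2}}]_{K_{2}K_{2}}\otimes[\bm{x}]_{K_{2}}$, and cancel the generalized permutation matrix to obtain the one-step estimate $[\bm{x}]_{K_{2}}\leq M\otimes[\bm{x}]_{K_{2}}\oplus\bm{b}$. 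Up to this point your argument is exactly the published one.

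The problem is the last step, which you have named but not closed. The paper disposes of the implication from the one-step estimate to $[\bm{x}]_{K_{2}}\leq M^{*}\otimes\bm{b}$ with the words ``by iteration''; as you correctly observe, iteration only gives $[\bm{x}]_{K_{2}}\leq M^{\otimes N}\otimes[\bm{x}]_{K_{2}}\oplus M^{*}\otimes\bm{b}$, and the first term can be discarded only when every long walk in $G(M)$ has weight tending to $\varepsilon$, i.e.\ when $G(M)$ has no zero-weight circuit. Such circuits do occur whenever $A$ admits a second $\varepsilon$-maximal multi-circuit obtained by rerouting $\mathcal{C}_{2}$ inside $K_{2}$, so this case must genuinely be handled. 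Your proposed repair --- redoing the computation for the approximants $\bm{x}(\delta)$ against $A(\bm{\zeta}_{2};\delta)$ --- does not go through as sketched: the only balance relation available for $\bm{x}(\delta)$ is the one relative to $A(\bm{\zeta}_{1};\delta)$ and $\mathcal{C}_{1}$, so the one-step estimate for the $\bm{\zeta}_{2}$-perturbed system cannot be derived for $\bm{x}(\delta)$ in the same way, and assuming instead that $\bm{x}$ is also a limit of vectors in $W(A(\bm{\zeta}_{2};\delta),\varepsilon,\mathcal{C}_{2})$ would be circular, since a statement of that kind is what the lemma is needed for. What is actually required is an argument that $[\bm{x}]_{K_{2}}$ cannot be tight and finite along a zero-weight circuit $D$ of $G(M)$ while exceeding $M^{*}\otimes\bm{b}$ there; in concrete examples the exact $\delta$-perturbed equations force the entries of $\bm{x}$ on $V(D)$ down to $\varepsilon$, but that deduction is not in your write-up. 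In short, you have reproduced the paper's argument and put your finger on the one delicate point --- which the paper itself passes over with ``by iteration'' --- but your proof does not complete it.
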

\begin{proof}
	Since $\tilde{W}(A,\mathcal{C}(\bm{\zeta}_{1})) \subset W(A,\varepsilon,\mathcal{C}(\bm{\zeta}_{1}))$,
	we have $A_{\setminus\mathcal{C}} \otimes \bm{x} = A_{\mathcal{C}} \otimes \bm{x}$
	for $\bm{x} \in \tilde{W}(A,\mathcal{C}(\bm{\zeta}_{1}))$.
	In particular, the maximum in $[A \otimes \bm{x}]_{i}$ is attained at least twice for each $i$.
	Hence, setting $K = V(\mathcal{C}(\bm{\zeta}_{2})), L = \{1,2,\dots,n\} \setminus V(\mathcal{C}(\bm{\zeta}_{2}))$, we have
	\begin{align*}
		[A_{\setminus\mathcal{C}(\bm{\zeta}_{2})}]_{KK} \otimes [\bm{x}]_{K} \oplus [A]_{KL} \otimes [\bm{x}]_{L}
			\geq [A_{\mathcal{C}(\bm{\zeta}_{2})}]_{KK} \otimes [\bm{x}]_{K},
	\end{align*}
	which yields 
	\begin{align*}
		[\xi_{A,\mathcal{C}(\bm{\zeta}_{2})}(\bm{x})]_{K} &= 
			([A_{\mathcal{C}(\bm{\zeta}_{2})}]_{KK}^{-1} \otimes [A_{\setminus\mathcal{C}(\bm{\zeta}_{2})}]_{KK})^{*} \otimes 
					([A_{\mathcal{C}(\bm{\zeta}_{2})}]_{KK}^{-1} \otimes [A]_{KL}) \otimes [\bm{x}]_{L} \\
			&\geq  [\bm{x}]_{K}
	\end{align*}
	by iteration.
\end{proof} 
\begin{proof}[Proof of Proposition \ref{dimsub2} for $\lambda = \varepsilon$]
	Take any vector $\bm{x} \in W(A,\varepsilon) \setminus \{\mathcal{E} \}$.
	Let $\mathcal{C}_{1}, \mathcal{C}_{2}, \dots, \mathcal{C}_{q}$ be the $\varepsilon$-maximal multi-circuits in $G(A)$.
	Since $\bm{x} \in \tilde{W}(A,\mathcal{C}_{1})$, we can express $\bm{x}$ as 
	\begin{align*}
		\bm{x} = \bigoplus_{j \not\in V(\mathcal{C}_{1})} \alpha_{j} \otimes \xi_{A,\mathcal{C}_{1}}(\bm{e}_{j}), 
		\quad \alpha_{j} \in \mathbb{R}_{\max}.
	\end{align*} 
	Note that if $\xi_{A,\mathcal{C}_{1}}(\bm{e}_{j}) \not\in \tilde{W}(A,\mathcal{C}_{1})$ then we set $\alpha_{j} = \varepsilon$.
	Let $\xi = \xi_{A,\mathcal{C}_{q}} \circ \cdots \circ \xi_{A,\mathcal{C}_{2}} \circ \xi_{A,\mathcal{C}_{1}}$
	and $X$ be the $n \times n$ matrix representing $\xi$.
	By Lemma \ref{xiineq}, we have $X \otimes \bm{e}_{j} \geq \bm{e}_{j}$.
	On the other hand, since $\bm{x} \in \tilde{W}(A,\mathcal{C}_{p})$ for $p=1,2,\dots,q$, we have $\xi(\bm{x}) = \bm{x}$.
	Hence, we have
	\begin{align*}
		\bm{x} = \xi^{n}(\bm{x}) = \bigoplus_{j \not\in V(\mathcal{C}_{1})} \alpha_{j} \otimes X^{\otimes n} \otimes \bm{e}_{j}.
	\end{align*} 
	If $\alpha_{j} \neq \varepsilon$, then the $j$th column of $X^{\otimes n}$ is identical to that of $X^{\otimes (n+1)}$,
	otherwise the graph $G(X)$ would have a circuit with the positive weight and hence
	some entry of $\alpha_{j} \otimes X^{\otimes k} \otimes \bm{e}_{j}$ would exceed that of $\bm{x}$ for sufficiently large $k$.
	Since $\xi_{A,\mathcal{C}_{p}} (\bm{y}) \in \tilde{W}(A,\mathcal{C}_{p})$ for any $\bm{y}$, 
	\begin{align*}
		X^{\otimes n} \otimes \bm{e}_{j} \leq \xi_{A,\mathcal{C}_{1}} (X^{\otimes n} \otimes \bm{e}_{j})
		\leq \cdots
		\leq X^{\otimes (n+1)} \otimes \bm{e}_{j} = X^{\otimes n} \otimes \bm{e}_{j}
	\end{align*}
	by Lemma \ref{xiineq}.
	Hence, we have $\xi_{A,\mathcal{C}_{p}} (X^{\otimes n} \otimes \bm{e}_{j}) = X^{\otimes n} \otimes \bm{e}_{j}$ for $p=1,2,\dots,q$.
	This means that $X^{\otimes n} \otimes \bm{e}_{j} \in \tilde{W}(A,\mathcal{C}_{p})$ for all $p = 1,2,\dots,q$ if $\alpha_{j} \neq \varepsilon$.
	Thus, $W(A,\varepsilon)$ is a subspace spanned by some of $X^{\otimes n} \otimes \bm{e}_{j}$ for $j \not\in V(\mathcal{C}_{1})$.
	Since $n - \ell(\mathcal{C}_{1})$ is the multiplicity of $\varepsilon$ in the characteristic polynomial of $A$, 
	we have proved the proposition.
\end{proof}
\begin{proof}[Proof of Proposition \ref{generalize} for $\lambda = \varepsilon$]
	If $A$ satisfies ($\bigstar$), then there exists exactly one $\varepsilon$-maximal multi-circuit in $G(A)$.
	Hence, it is sufficient to take $\zeta_{ij} = 0$ for all $i,j$.
\end{proof}

\section*{Acknowledgment}
This work was supported by the Grant-in-Aid for Early-Career Scientists No.~20K14367
 from the Japan Society for the Promotion of Science.


\end{document}